\numberwithin{equation}{section}
\renewcommand{\theequation}{\arabic{section}.\arabic{equation}}
\newtheorem{theorem}{Theorem}[section]
\newtheorem{prop}[theorem]{Proposition}
\newtheorem{lemma}[theorem]{Lemma}
\newtheorem{cor}[theorem]{Corollary}
\newtheorem{defn}[theorem]{Definition}
\newtheorem{remark}[theorem]{Remark}
\newenvironment{rem}{\begin{remark}\rm}{\end{remark}}
\newtheorem{example}[theorem]{Example}
\newtheorem*{structuretheorem*}{Structure Theorem}
\def\be{\begin{equation}}
\def\ee{\end{equation}}
\def\bear{\begin{eqnarray}}
\def\eear{\end{eqnarray}}
\def\best{\begin{eqnarray*}}
\def\eest{\end{eqnarray*}}
 \def\non{\noindent}
 \def\bd{\partial}
 \def\ra{\rightarrow}
\def\lra{\longrightarrow}
\def\rg{\rangle}
\def\lg{\langle}
\def\r#1{\right#1}
\def\l#1{\left#1}
\def\ti{\times}
\def\del{\overline \partial}
\def\ma#1{\mathop {#1} \limits}
\def\al{\alpha}
\def\ga{\gamma}
\def\de{\delta}
\def\ep{\varepsilon}
\def\si{\sigma}
\def\Si{\Sigma}
\def\De{\Delta}
\def\phi{\varphi}
\def\w{\omega}
\def\om{\omega}
\def\Bbb{\mathbb}
\def\Z{{ \Bbb Z}}
\def\R{{ \Bbb R}}
\def\Q{{ \Bbb Q}}
\def\cx{{ \Bbb C}}
\def\cal{\mathcal}
\def\A{{\mathcal A}}
\def\C{{\mathcal C}}
\def\D{{\mathcal D}}
\def\E{{\mathcal E}}
\def\F{{\mathcal F}}
\def\J{{\mathcal J}}
\def\L{{\mathcal L}}
\def\M{{\mathcal M}}
\def\oM{\overline{\mathcal M}}
\def\N{\mathcal{N}}
\def\O{\mathcal{O}}
\def\P{{\mathcal P}}
\def\W{{\mathcal W}}
\def\wt#1{\widetilde{#1}}
\def\wh#1{\widehat{#1}}
\def\ov#1{\overline{#1}}
\def\cok{\mathrm{coker\,}}
\def\ind{\mathrm{index\,}}
\def\im{\mbox{\rm im}\;}
\def\Aut{\mathrm{Aut}}
\def\Hom{\mathrm{Hom}}
\def\Fred{\mathrm{Fred}}
\def\codim{\mbox{codim }}
\def\sign{\mathrm{sign}\,}
\definecolor{aqua}{cmyk}{0,1,0, .3}
\definecolor{teal}{rgb}{0.2, .8, .7}
\def\ev{\mathrm{ev}}
 \def\cH{\check{\mathrm{H}}}  
 \def\Ji{{\mathcal J}_{isol}}
\def\pf{\non {\it Proof. }}  
\def\pd{\partial}
\title[The GV conjecture]{The  Gopakumar-Vafa formula for symplectic manifolds} 
\author{Eleny-Nicoleta Ionel}
\address{Department of  Mathematics,  Stanford University}
\email{ionel@math.stanford.edu}
\author{Thomas H. Parker}
\address{Department of  Mathematics, Michigan State University}
\email{parker@math.msu.edu}
\thanks{The research of EI was supported in part by  NSF grant DMS-0905738 and that of TP by  NSF grant DMS-1011793}
\begin{document}

\vskip.15in

\begin{abstract}
The   Gopakumar-Vafa conjecture predicts  that the Gromov-Witten invariants of a Calabi-Yau 3-fold can be canonically expressed  in terms of  integer invariants  called  BPS numbers.   Using the methods of symplectic  Gromov-Witten theory, we prove  that  the Gopakumar-Vafa conjecture holds for any  symplectic Calabi-Yau 6-manifold, and hence for Calabi-Yau 3-folds.  The results  extend to all symplectic 6-manifolds and to the genus zero GW invariants of semipositive manifolds. 
 \end{abstract}

\maketitle

\vspace{-4mm}

\setcounter{equation}{0}


The   Gopakumar-Vafa conjecture \cite{gv} predicts  that the Gromov-Witten invariants $GW_{A, g}$ of a Calabi-Yau 3-fold can be expressed in   terms of some other invariants $n_{A, h}$, called  BPS numbers, by a transform between their generating functions:
\bear\label{Intro.1}
\ma\sum_{\substack{A\ne 0 \\ g}} GW_{A, g}\, t^{2g-2} q^{A}\ =\  \ma\sum_{\substack{A\ne 0 \\ h}} n_{A, h}
\ma\sum_{k=1}^\infty \frac 1k\l(2\sin \frac{kt}2\r)^{2h-2}  q^{kA}.
\eear
The content of the conjecture is that, while the  $GW_{A,g}$ are rational numbers, the BPS numbers $n_{A,h}$ are {\em integers.} (Gopakumar and Vafa also conjectured that  for each $A\in H_2(X, \Z)$, the coefficients of \eqref{Intro.1} satisfy $n_{A, h}=0$ for large $h$; we do not address this finiteness statement here.)    It is natural to enlarge the context by regarding this as a conjecture about the Gromov-Witten invariants
of any closed symplectic 6-manifold $X$ that satisfies the topological Calabi-Yau condition $c_1(X)=0$.

Formula (\ref{Intro.1}) can be viewed as a statement about  the structure of the space of solutions to the $J$-holomorphic map equation.  For a generic almost complex structure $J$, each $J$-holomorphic map is the composition $f= \phi\circ \rho$ of a multiple-cover $\rho$  and an embedding $\phi$. The embeddings are  well-behaved:  they  have no nontrivial automorphisms, and the moduli space of $J$-holomorphic embeddings is a   manifold.  But  multiply-covered maps cause severe analytical problems with transversality. 
In the symplectic construction of the GW invariants, these problems are avoided by lifting to a cover of the moduli space and turning on a lift-dependent perturbation  $\nu$ of the equation;  this  destroys the  multiple-cover structure and only shows that the numbers $GW_{A,g}$ are rational.   But it also suggests an interpretation of the GV formula:  the righthand side of \eqref{Intro.1}  might be a sum over embeddings, with the sum over $k$ counting the contributions of  the multiple covers of each embedding.

This viewpoint is very similar to C.~Taubes' work  \cite {t} relating  Gromov invariants to the  Seiberg-Witten invariants of 4-manifolds, and our approach   has been fundamentally influenced by Taubes.   It is also similar to the 4-dimensional situation described by Lee and Parker in  \cite{LP1} and \cite{LP2}.  In both cases, the set of $J$-holomorphic embeddings in each homology class is discrete and compact for generic $J$ --- a simplifying circumstance that does not appear to be true in the context of formula \eqref{Intro.1}.   Rather, for generic $J$ and with a fixed bound $E$ on area and genus,  the moduli space $\M_{emb}(X)$ of embeddings is a countable  set, possibly with accumulation points.  With this picture in mind, our proof is based on three main ideas.

 The first is the observation that, again for fixed $J$ and $E$,  the full moduli space $\oM(X)$ can be decomposed (in many ways) into finitely many ``clusters'' $\O_j$.  Each  cluster  consists of all of the $J$-holomorphic maps, including multiple covers,  whose image lies in the  $\ep$-tubular neighborhood of some  smooth, embedded $J$-holomorphic ``core curve'' $C\subset X$.  A cluster is an open and closed subset of the moduli space; it may have complicated internal structure, but there is a well-defined total contribution $GW(\O)$ of all the maps in the cluster to the series \eqref{Intro.1}.  These   contributions $GW(\O)$  are local,  depending only on  $\ep$ and  $J$ in the neighborhood,   and it suffices to show that the GV conjecture holds for the contribution of each cluster.

The second observation is that there exist certain standard ``elementary clusters" whose local invariants are explicitly computable.  Results of Junho Lee \cite{L} show that, for each embedded genus $g$ curve $C$, there exists an almost complex structure $J$ in an $\ep$ neighborhood $U$ of $C$ in $X$ that makes $C$ ``super-rigid'', meaning that all $J$-holomorphic maps into $U$ are in fact maps into $C$.  For $g=0$, one can take $J$ to be the standard structure of the bundle  $\O(-1)\oplus\O(-1)$, but for higher genus $J$ is a non-integrable almost complex structure.  In Section~3 we compute the GW series $GW(\O)$ of elementary clusters based on a calculation of Bryan and Pandharipande 
\cite{b-p-GWcurves}.   The resulting formula shows that the local version of the GV conjecture  holds for elementary clusters.

The proof is completed by an isotopy argument in the spirit of Taubes' work, and extending  arguments in  \cite{ip1}.  For a fixed cluster, we deform  $J$ in a neighborhood of the core curve to make it the $J$ of an elementary cluster.  During the isotopy, the cluster series $GW(\O_t)$ can change according to several types of   wall-crossing formulas.  For a generic isotopy,  the core curve could disappear in a ``creation-annihilation'' singularity. 
  To avoid this, we use a generic isotopy in which  the restriction of $J$ to the core curve is fixed;  singularities then occur only when two  core curves pass through one another momentarily.   In  Sections~6 and 7, we  use Kuranishi models to show that the cluster series   is invariant modulo contributions of finitely many clusters whose core curves have higher degree or genus.  The Gopakumar-Vafa  conjecture follows by induction.

 \vspace{-2mm}
  \begin{figure}[ht!]
\labellist
\small\hair 2pt
\pinlabel ${\text{$\ep$-nbd of $(C,J_0)$}}$ at -34 34   
\pinlabel ${\text{($C, J_1)$  elementary}}$ at  290 36  
\pinlabel ${\text{core curve $C$}}$ at 40 54  
\endlabellist 
\centering
\includegraphics[scale=1]{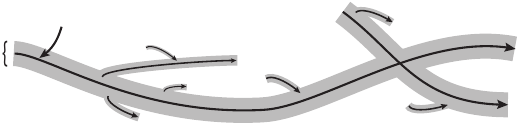}
\caption{\small As $J_t$ changes, embedded curves of higher genus or degree can emerge from,  or sink into,  an $\ep$-tubular neighborhood of $C$, and the core curve $C$ can pass through another embedded curve with the same  genus and degree.}
\end{figure}

 
\vspace{-2mm}

Our main results, Theorems~\ref{Theorem7.1} and \ref{Theorem8.1}, can be stated as a structure theorem.

 \begin{structuretheorem*}
	 \label{T.GWintro-0} For any closed symplectic Calabi-Yau 6-manifold $X$, there exist {\em unique integer} invariants  $e_{A, g}(X)$,  indexed by non-zero  classes $A\in H_2(X, \Z)$ and  genera $g\ge 0$, such that the series \eqref{Intro.1}  has the form
\bear\label{IntroTheoremEq} 
GW(X)\ =\ \sum_{A\ne 0}\sum_{g\ge 0}\  e_{A, g}(X) \cdot GW^{elem}_{g}(q^{A}, t),
\eear
where $ GW^{elem}_{g}(q,t)$ is the universal power series \eqref{3.defGelem}, which  depends on $g$,  but not on $X$. Furthermore, all coefficients $n_{A, g}$ in \eqref{Intro.1} are integers.

\end{structuretheorem*}
There is an extensive literature revolving around this GV conjecture.  J.~Bryan and R.~Pandharipande have a series of papers about it, including two (\cite{b-p-BPS} and \cite{b-p-GWcurves}) relevant to our approach.  For algebraic 3-folds,  several  BPS-type  integer invariants  have been defined using holomorphic bundles, including the Pandharipande-Thomas \cite{p-t} and Donaldson-Thomas  invariants, with conjectural GV-type correspondences GW/DT/PT between them. For toric 3-folds,  Maulik, Oblomkov, Okounkov and Pandharipande proved the GW/DT correspondence by calculating both sides explicitly in  a computational tour de force \cite{mnop}. Pandharipande  and Pixton \cite{p-p} established the GW/PT correspondence for CY complete intersections in products of projective spaces.  Other instances have been observed when a change of variables in the GW series produces integer invariants, including a formula for Fano classes ($A\in H_2(X)$ with $c_1(X)A>0$) in symplectic 6-manifolds  proved by  A.~Zinger \cite{z},  and the computation of Klemm and Pandharipande   for  Calabi-Yau 4-folds \cite{k-p}.  In  Section~\ref{section9} we combine our result on the Calabi-Yau classes   first with Zinger's to obtain a GV-type formula for  all symplectic 6-manifolds, and  then with  Klemm and Pandharipande's to obtain a GV-type formula for  genus zero invariants of semipositive symplectic manifolds.

We thank the referees for their meticulous reviews and numerous insightful suggestions.

 \vspace{5mm}
\setcounter{equation}{0}
\section{Curves in symplectic Calabi-Yau 6-manifolds} 
\label{section1}
\bigskip

The Gromov-Witten invariants of a closed symplectic manifold $(X,\w)$ are
constructed in two steps.  One first forms the universal moduli space and its
stabilization-evaluation map $se$
\bear
\label{0.1}
\xymatrix{
\oM(X) \ar[r]^{  se  \qquad \ }\   \ar[d]_\pi &\ \  \bigsqcup_{g,n} \oM_{g,n}\times X^n \\
\J & 
}
\eear
over a space  $\J$ of  $\w$-tame almost complex structures  on $X$.  This moduli space 
consists of equivalence classes (up to reparametrizations of the domain) of pairs $(f, J)$,  where $f:C\to (X, J)$ is a stable pseudo-holomorphic map whose domain $C$ is a nodal marked Riemann surface;  it has components $\oM_{ A, g, n}(X)$ labelled by the genus $g$ of $C$, the number $n$ of marked points,  and the homology class $A=f_*[C]\in H_2(X; \Z)$ with $\w(A)\ge 0$.

 As is standard in the subject,  we take $\J$ to be a space $\J^l$ of $C^l$ almost complex structures with $l$ large (and sometimes  $\J=\J^\infty$),    take  $f$ in  a corresponding  space of maps (described in Section~4), and give $\ov\M(X)$   the Gromov  topology.       The results in Sections~1-3  depend on    the specifics of these spaces only through Lemma~\ref{Lemma1.1}, whose rather technical proof is deferred until Section~5 and Appendix~A.

 It is frequently convenient to define the {\em  energy} of a triple $(A, g, n)$  by 
$$
E(A, g, n) \ =\ \max\{\w(A), g, n\}  \ge 0
$$
and to restrict attention to the subset $\oM^E(X)$  ``below energy $E$'',
meaning the union of all components with $E(A, g, n)\le E$, and the corresponding
fibers $\oM^{J, E}(X)$ of (\ref{0.1}).   The restriction of $\pi$ to  each $\oM_{A,g,n}(X)$ is proper, and the fibers
carry a $d$-dimensional  virtual fundamental class 
\bear
\label{1.VFC}
[\oM^J_{A,g,n}(X)]^{vir}\ \in \  \cH^d(\oM^J_{A,g,n}(X); \Q)^\vee, 
\eear
where 
\bear
\label{1.dimformula}
d\ =\ 2c_1(X)A+(\dim X-6)(1-g) +2n, 
\eear
 and $\cH^*(\cdot)^\vee$ denotes the dual of  \v{C}ech cohomology with rational coefficients  (cf. \cite[Definition~9.3.1]{pardon}).  Moreover, \eqref{1.VFC} is deformation invariant in the sense that for every path  $\gamma$ in $\J$ from $J_0$ to $J_1$, 
\bear\label{defm.invar}
[\oM^{J_0}_{A,g,n}(X)]^{vir}=[\ov \M^{J_1}_{A,g,n}(X)]^{vir}\quad \mbox{ in } \quad  \cH^d(\oM^\gamma_{A,g,n}(X))^\vee
\eear
(cf. the proof \cite[Lemma~9.3.2]{pardon}). Here $\ov\M^\gamma$ denotes the parameterized moduli space 
\best
\ov\M^\gamma= \big\{\, (t, ([f], J))\in [0,1]\times \ov\M(X)  \ \big|\  \gamma(t)=J  \; \big\}
\eest
obtained by pulling back $\ov\M$ along $\gamma$.

\medskip

Gromov-Witten invariants are especially simple if $c_1(X)=0$ and $\dim X=6$;
such spaces are often called {\em symplectic Calabi-Yau 6-manifolds}.  In this case all
terms in (\ref{1.dimformula}) vanish when $n=0$; the virtual fundamental class then has dimension $d=0$ for all $g$ and $A$, and we drop $n$ from the notation.  In this case, the Gromov-Witten  invariants  
\bear\label{gw.ag}
 GW_{A,g}(X)= \lg \; [\oM^J_{A,g}(X)]^{vir}, 1 \rg \in \Q
\eear
are obtained by pairing the virtual fundamental class with $1$ in $\cH^0(\oM)$. These are assembled in a formal power series
\bear
\label{1.GWformula.number}
GW(X)\ =\ \sum_{\w(A)>0} \sum_{g=0}^\infty\;\; GW_{A, g}(X)\;\;
t^{2g-2}\ q^A 
\eear
 in the ``rational Novikov ring'' $\Lambda$ generated by $t$ and $\{q^A\}$ with  $tq^A= q^At$ and $q^{A+B}= q^A q^B$,  and whose elements have finitely many non-zero terms below each energy level.   Note that the first sum in \eqref{1.GWformula.number}, over all positive $A\in H_2(X; \Z)$, omits  any contributions  from the class $A=0$.  For consistency,
the term  ``$J$-holomorphic map''  will always mean a {\em non-trivial} map  (i.e. $A\not= 0$), and a
``$J$-holomorphic curve'' in $X$ is the image of such a map. When $f$ is an embedding, we identify $C$ with its image  $f(C)$ in $X$. 
\smallskip

\begin{rem}
\label{2.Remark}
The use of virtual fundamental cycles here and in Section~2  
 makes the presentation clear and succinct,
but is not essential for understanding the arguments in this paper. 
In fact, all of the symplectic manifolds $X$ that we consider are semipositive, and 
one might alternatively regard the GW invariants,  and the  local contributions to the GW invariants introduced in Section~2,  as counts of perturbed $J$-holomorphic maps.
\end{rem}

\smallskip
Using the terminology of \cite[\S 2.5]{ms},  a point $x\in C$ is called an {\em injective point} for a   map $f:C\to X$ if $df(x)\not= 0$ and $f^{-1}(f(x))=\{x\}$ when $C$ is smooth;  if $C$ is nodal  we also require that $x$ is not a node.  A  pseudo-holomorphic map $f:C\to X$ from a   nodal   (not necessarily connected)  curve is {\em simple} if it has an injective point on each  irreducible component.  The open subset of simple maps in a moduli space will be denoted by the subscript {\em simple}; for example
\begin{equation}
\label{1.Msimple}
\oM^{J,E}(X)_{simple}
\end{equation}
denotes the set of simple maps in $\oM^{J,E}(X)$, while $\M(X)_{simple}$ denotes the open subset of   $\oM(X)$ consisting of simple maps with smooth domain. The Micallef-White Theorem, \cite{mw} or \cite[Prop. 2.5.1]{ms}, implies that the set of injective  points of a simple $J$-holomorphic map is open and dense in $C$, and hence the image under $f$ of this set is a submanifold of $X$.

 A  pair $ p=(f,J)$ representing a point in $\ov\M^{J,E}(X)$   with smooth domain is {\em regular} if the linearization $D_p$, given by  \eqref{1.formulaL}  and completed in Sobolev norms as in \eqref{def.dl.comp},   is onto.  It is a {\em regular embedding} if it is both regular and  $f$ is an embedding. Since the index of $D_p$ is 0 by \eqref{1.dimformula},  each regular  pair  has a well-defined $\sign (f, J)=\pm 1$,  given by the mod 2 spectral flow from $D_p$ to any invertible complex operator.   Finally, by a Baire subset of the parameter space we mean a countable intersection of open and dense sets. 
\begin{lemma}
\label{Lemma1.1}
Let $(X,\w)$ be a symplectic Calabi-Yau 6-manifold.  Then for each $E>0$ there is a Baire subset $\J_E^*$ of $\J$ such that for each $J\in\J_E^*$
\begin{enumerate}[(a)]
\item All simple $J$-holomorphic maps below energy $E$ are regular  embeddings with  disjoint images.
\vspace{1.5mm}
 
\item The projection $\pi$ in (\ref{0.1}) is a local diffeomorphism around each regular  embedding.
\end{enumerate}
\end{lemma}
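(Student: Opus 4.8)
The plan is to run the standard universal--moduli--space and Sard--Smale machinery, in the form of \cite{ms}, sharpened by dimension counts special to the borderline dimension $\dim X = 6$. Fix a Banach completion $\J^\ell$ of $\J$ by tame almost complex structures of class $C^\ell$, and for each pair $(A,g)$ with $E(A,g)\le E$ form the universal moduli space
\[
\M^*_{A,g}\ =\ \{\,(f,J)\ :\ J\in\J^\ell,\ f\in\oM^J_{A,g}(X)_{simple}\,\},
\]
with its projection $\pi$ to $\J^\ell$. The key analytic input is that on a simple map $f$ the universal linearization $(\xi,Y)\mapsto D_{f,J}\xi + P_f(Y)$, where $P_f$ is the first-order term linear in the variation $Y$ of $J$, is surjective: injective points are dense on each irreducible component of the domain by the Micallef--White theorem \cite{mw}, any nonzero element of $\cok D_{f,J}$ is nonvanishing at some such point by unique continuation, and it can be paired nontrivially with $P_f(Y)$ for a $Y$ supported near the image of that point. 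Hence each $\M^*_{A,g}$ is a separable Banach manifold and $\pi$ is Fredholm; by \eqref{1.dimformula} its index is the virtual dimension $2c_1(X)A + (\dim X - 6)(1-g)$, which vanishes since $c_1(X) = 0$ and $\dim X = 6$.

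For each $(A,g)$ the Sard--Smale theorem produces a Baire subset of $\J^\ell$ of regular values of $\pi$, over which the fibre $\oM^J_{A,g}(X)_{simple}$ is a discrete set of maps $f$ with $D_{f,J}$ onto --- hence invertible, as the index is $0$. To pass from ``regular simple map'' to ``regular embedding'', adjoin to $\M^*_{A,g}$ one or two auxiliary marked points, compose with the corresponding evaluation and derivative maps, and apply Sard--Smale to the universal loci of simple maps that have a nodal domain, a point where $df = 0$, or a self--intersection, and to the universal locus of pairs of \emph{distinct} simple maps whose images meet. Cut out by the diagonal of $X\times X$ --- respectively, for the critical--point locus, by the zero section of a rank-$3$ complex bundle --- these project to $\J^\ell$ as Fredholm maps of index $-2$, $-4$, $-2$, and $-2$ because $\dim X = 6$, so for a further Baire subset of $\J^\ell$ their fibres are empty below energy $E$. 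Intersecting these countably many Baire sets --- only finitely many genera satisfy $g\le E$, and $H_2(X;\Z)$ is countable --- gives a Baire subset of $\J^\ell$ over which every simple $J$-holomorphic map below energy $E$ has smooth domain, is an embedding, and is disjoint from every other simple $J$-holomorphic curve; this is (a).

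For (b), let $(f,J_0)$ be any regular embedding. Being an embedding of a smooth domain, $f$ has trivial automorphism group, so near $(f,J_0)$ the space $\oM(X)$ is just the space of $J$-holomorphic maps near $f$ --- no bubbling or nodes can occur nearby, since a bubble would carry more than the minimal energy. Because $D_{f,J_0}$ is onto and of index $0$, it is invertible, so the implicit function theorem gives, for every $J$ near $J_0$, a \emph{unique} $J$-holomorphic map $f_J$ near $f$, depending smoothly on $J$, with $f_{J_0} = f$. Hence a neighbourhood of $(f,J_0)$ in $\oM(X)$ is the graph $J\mapsto f_J$ over a neighbourhood of $J_0$ in $\J$, so $\pi$ is a local diffeomorphism there; this is (b).

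The foregoing produces the Baire set inside $\J^\ell$; to place $\J_E^*$ inside the space $\J$ of smooth structures one runs the usual Taubes argument --- exhaust the simple maps below energy $E$ by compact families, note that for a fixed family the set of smooth $J$ regular in the above sense is open (Gromov compactness) and dense (approximation of $C^\ell$ structures by smooth ones), and intersect over the family (this step is unnecessary if one uses Floer's space of smooth structures from the start). The step I expect to be the main obstacle is the dimension count of the second paragraph: one must check that, after perturbing $J$, the relevant universal evaluation and derivative maps are transverse to the diagonal and to the zero section, and that this holds simultaneously for nodal domains, multiple points, critical points, and pairs of distinct curves. It is exactly the inequality $\dim X - 4 > 0$ for the self--intersection and disjointness loci, together with $2c_1(X)A = 0$ forcing the ambient virtual dimension to be $0$, that uses the hypothesis that $X$ is a symplectic Calabi--Yau $6$-manifold: in lower dimension one gets only immersions, and without $c_1(X) = 0$ non-embedded simple maps can persist.
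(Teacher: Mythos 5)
Your proposal is correct and follows essentially the same route as the paper: part (a) is exactly the argument of the Appendix (Lemma~\ref{LemmaA.1} and Corollary~\ref{corA.2}), namely Sard--Smale on the universal moduli spaces of simple maps with one or two marked points, with the non-immersion and double-point loci cut out transversally as codimension-$6$ submanifolds so that the restricted projections have negative index ($-4$ and $-2$), nodal domains and intersections of distinct curves being absorbed into the double-point count via resolutions with possibly disconnected domains. Part (b) likewise matches the paper's Theorem~\ref{MmfdThm}(b): at a regular embedding the index-$0$ linearization is invertible, so the implicit function theorem exhibits the moduli space locally as a graph over $\J$ and $\pi$ is a local diffeomorphism.
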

Part (a) of Lemma~\ref{Lemma1.1}  is proved  as Corollary~\ref{corA.3}, and part (b) is proved  as Proposition~\ref{MmfdThm}a; these proofs use standard techniques.  For our purposes, it is best to enlarge $\J_E^*$ to a set $\Ji^E$ that emphasizes slightly weaker properties.
\begin{defn}
\label{1.defJisol}
 Denote by $\Ji^E$ the set of all $J\in \J$such that the moduli space 
(\ref{1.Msimple}), with the Gromov topology, consists of isolated points  that are  embeddings (not necessarily regular)  with  disjoint images.
\end{defn}

\begin{cor} 
\label{Cor1.3}
$ \Ji^E$ is  dense in $\J$.  \end{cor}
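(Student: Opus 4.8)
The plan is to deduce Corollary~\ref{Cor1.3} from Lemma~\ref{Lemma1.1} by establishing the inclusion $\J_E^*\subseteq\Ji^E$. Once this is in hand the corollary is immediate: the space $\J$, with its $C^\infty$ topology (or Floer's $C^\ep$ topology), is a Baire space, so the Baire subset $\J_E^*$ produced by Lemma~\ref{Lemma1.1} is dense in $\J$; hence the larger set $\Ji^E$ is dense as well.

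To prove the inclusion, fix $J\in\J_E^*$ and an element $[f]$ of $\oM^{J,E}(X)_{simple}$. By Lemma~\ref{Lemma1.1}(a), $f$ is a regular embedding and distinct elements of $\oM^{J,E}(X)_{simple}$ have disjoint images, which supplies two of the three properties demanded by Definition~\ref{1.defJisol}. For the third --- that $[f]$ is an isolated point --- I would invoke Lemma~\ref{Lemma1.1}(b): since the projection $\pi$ of \eqref{0.1} is a local diffeomorphism near the regular embedding $(f,J)$, there is a neighborhood $\mathcal U$ of $[f]$ in $\oM(X)$ on which $\pi$ restricts to a diffeomorphism onto an open subset of $\J$. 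In particular $\pi^{-1}(J)\cap\mathcal U=\{[f]\}$, so $[f]$ is isolated in the fiber $\oM^{J,E}(X)$, and \emph{a fortiori} in $\oM^{J,E}(X)_{simple}$. Thus $J\in\Ji^E$.

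The one step that needs attention, and which I expect to be the only real obstacle, is the compatibility of the two topologies in play: Definition~\ref{1.defJisol} refers to the Gromov topology on $\oM^{J,E}(X)_{simple}$, whereas the local-diffeomorphism statement of Lemma~\ref{Lemma1.1}(b) concerns the manifold structure carried by $\oM(X)$ near regular embeddings. At an unbubbled limit of simple maps the Gromov topology agrees with the usual $C^\infty$ topology on maps with varying domain (this is built into the setup of \cite{ms}), so being isolated in a manifold chart is the same as being isolated in the Gromov topology; verifying this is entirely routine. I would also note that no compactness of $\oM^{J,E}(X)_{simple}$ is used --- and indeed it can fail, since sequences of simple maps may accumulate onto multiply-covered ones --- which is exactly why the conclusion asserts only that the points are isolated, not that they are finite in number.
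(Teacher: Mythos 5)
Your proposal is correct and follows the paper's own argument: the proof of Corollary~\ref{Cor1.3} likewise shows $\J_E^*\subseteq\Ji^E$ by using Lemma~\ref{Lemma1.1}(a) for regularity, embeddedness and disjoint images, and Lemma~\ref{Lemma1.1}(b) to conclude each such $(f,J)$ is isolated in the fiber, with density then coming from $\J_E^*$ being a Baire subset of $\J$. Your extra remark on reconciling the Gromov topology with the manifold topology near unbubbled simple maps is a reasonable point the paper leaves implicit, and it is indeed routine.
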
 
\begin{proof}  For each $J\in \J_E^*$,  each simple $J$-holomorphic map $f$ is an embedding,  and is regular by part (a) of Lemma~\ref {Lemma1.1}. By part (b), each such point $(f, J)$ is an isolated point 
of  the fiber $\M^{J,E}(X)_{simple}$ of $\pi$. 
\end{proof}

Unfortunately, the images in $X$ of the pseudo-holomorphic maps that appear in Lemma~\ref{Lemma1.1} may accumulate. To focus on the images, 
consider the space  $\mathrm{Subsets}(X)$ of all  non-empty compact subsets of $X$.  Fix a background Riemannian metric on $X$ with distance function $d$.  Then 
$\mathrm{Subsets}(X)$ is a metric space with the  Hausdorff distance, defined by
$$
d_H(A, B)\ =\  \sup_{a\in A}\,  \inf_{b\in B}\, d(a,b)\ +\  \sup_{b\in B}\,\inf_{a\in A}\, d(a,b)
$$
for  $A, B \subseteq X$. Let $c$ be the  ``underlying curve'' map 
\bear
\label{1.firstunderlyingcurve}
c:\oM(X)\ma\lra    \mathrm{Subsets}(X)\times \J
\eear
 that associates to each $(f,J)$ the pair $(f(C), J)$ with $f(C)$  regarded as subset of $X$.  This map $c$ is continuous, and Gromov compactness implies that its restriction to $\oM^E(X)$ is proper.  Let $\C(X)$, $\C^E$ and $\C^{J,E}$ denote, respectively, the images of  $\oM(X)$,  $\oM^{E}(X)$ and  $\oM^{J,E}(X)$ under $c$.  With this notation, there is a commutative diagram
\bear
\label{1.maptoC}
\begin{tikzcd}[row sep=tiny]
\oM^{E}(X) \arrow{dd}{\pi} \arrow{dr}{c} & \\
&   \C^E.\arrow{dl}  \\
  \J & 
\end{tikzcd}
\eear
 Viewed differently,  convergence in $\C^E$ defines a topology on $\oM^{E}(X)$ that we will call the ``rough topology''.

 In general, non-trivial $J$-holomorphic maps $f:C\to X$ from nodal curves  can be multiple covers, and simple maps can converge to multiply covered maps. Definition~\ref{1.defJisol} constrains how limits are multiply covered, as the following lemma shows.
 
\begin{lemma} 
\label{Lemma1.4} 
For   $J\in \J^E_{isol}$,
\begin{enumerate}[(a)]
\item Every $J$-holomorphic map $f:C\ra X$ below energy $E$ is a composition $\phi\circ \rho$ 
of a holomorphic map $\rho:C \ra C_{red} $ of complex curves and a $J$-holomorphic embedding $\phi:C_{red}\ra X$.  This  decomposition is unique up to reparametrizations of $C_{red}$. When $f$ is simple, $\phi=f$. 
          
\item  If $f:C\ra X$ is a limit   in the rough topology of a sequence  $\{f_n\}$ in $\oM^{J,E}(X)$ with $d_H(f_n, f)\not= 0$ for all $n$, then  the factorization $f=\phi\circ \rho$ has  either $\deg \rho>1$ or $\mbox{genus}(C)>\mbox{genus}(C_{red})$.
\end{enumerate}
\end{lemma}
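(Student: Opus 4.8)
\emph{Part (a).} The plan is to factor componentwise and then glue using $J\in\Ji^E$. On each non-contracted irreducible component $C_i$ of $C$ the classical factorization theorem for $J$-holomorphic maps with connected domain (\cite[Prop.~2.5.1]{ms}) gives $f|_{C_i}=\phi_i\circ\rho_i$ with $\rho_i:C_i\to C_i'$ a holomorphic branched cover and $\phi_i:C_i'\to X$ simple. Since $\phi_i(C_i')\subset f(C)$ and $\mathrm{genus}(C_i')\le\mathrm{genus}(C_i)\le E$, the map $\phi_i$ lies below energy $E$, so Definition~\ref{1.defJisol} forces $\phi_i$ to be an embedding whose image is one of the fixed, pairwise disjoint embedded $J$-curves of $X$. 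The gluing is then forced: at each node of $C$ the two local branches of $f$ share an image point, so by disjointness the corresponding curves $\phi_i(C_i')$ must coincide, and a contracted component is collapsed by $\rho$ to the common image point of its neighbors. Patching the $C_i'$ along these identifications yields a connected nodal curve $C_{red}$, a $J$-holomorphic embedding $\phi:C_{red}\to X$, and a holomorphic $\rho:C\to C_{red}$ with $f=\phi\circ\rho$. Uniqueness: any factorization $\phi'\circ\rho'$ with $\phi'$ an embedding has $\mathrm{image}(\phi')=f(C)=\mathrm{image}(\phi)$, hence $\phi'=\phi\circ\psi$ for a biholomorphism $\psi$ of $C_{red}$, which determines $\rho'=\psi^{-1}\circ\rho$. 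Finally, if $f$ is simple it has an injective point on each $C_i$, so each $\rho_i$ has degree $1$ and is a biholomorphism onto its image; hence $\rho$ is a biholomorphism and $\phi=f$.

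\emph{Part (b).} I will prove the contrapositive. Suppose $\deg\rho=1$ and $\mathrm{genus}(C)=\mathrm{genus}(C_{red})$; then part (a) makes $f=\phi$ a simple $J$-holomorphic embedding, and the goal is to exhibit some $n$ with $d_H(f_n,f)=0$. Since $J\in\Ji^E$ (together with Lemma~\ref{Lemma1.1}), $f$ is a regular embedding, hence an isolated point of $\oM^{J,E}(X)$ in the Gromov topology, with image a curve $\Sigma=f(C)$; in particular $D_{f,J}$ is invertible.

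Apply part (a) to each $f_n$: $f_n=\phi_n\circ\rho_n$ with $\phi_n$ a $J$-holomorphic embedding below energy $E$ onto the curve $\Sigma_n=f_n(C_n)$. If $d_H(f_n,f)\ne 0$ then $\Sigma_n\ne\Sigma$, and Definition~\ref{1.defJisol} then makes $\Sigma_n$ disjoint from $\Sigma$; rough convergence means $\Sigma_n\to\Sigma$ in Hausdorff distance, with $\mathrm{genus}(\Sigma_n)$ and $\mathrm{area}(\Sigma_n)$ bounded by $E$. By Gromov compactness a subsequence of $\{\phi_n\}$ converges in the Gromov topology to a stable $J$-holomorphic map $\phi_\infty$ whose image is the Hausdorff limit $\Sigma$; by part (a), $\phi_\infty=\psi\circ\sigma$ with $\psi$ an embedding onto $\Sigma$, and since $\psi$ and $f$ are both simple with image $\Sigma$, Definition~\ref{1.defJisol} forces them to agree up to reparametrization. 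If $\sigma$ is an isomorphism then $\phi_\infty=f$ up to reparametrization, so $\phi_n\to f$ in the Gromov topology along the subsequence; as $f$ is isolated, $\phi_n=f$ and hence $\Sigma_n=\Sigma$ for large $n$ in the subsequence, contradicting $d_H(f_n,f)\ne 0$.

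It remains to rule out the alternative, that $\sigma$ is nontrivial: equivalently, that the embedded curves $\Sigma_n$ accumulate onto the regular embedded curve $\Sigma$ either with multiplicity $\deg\sigma\ge 2$ or by developing a vanishing positive-genus handle. This is the step I expect to be the main obstacle. The plan is a local analysis in a thin tubular neighborhood $N_\delta(\Sigma)$: by the standard structure theory for $J$-holomorphic curves near an embedded $J$-curve (Taubes~\cite{t}, Lee--Parker~\cite{LP1,LP2}, \cite{ip1}) each $\Sigma_n\subset N_\delta(\Sigma)$ is, after shrinking $\delta$, a branched multisection of $N_\delta(\Sigma)\to\Sigma$; expressing its fluctuation off the limiting multiple cover and normalizing, a subsequential limit of the normalized fluctuations would be a nonzero Jacobi field of that cover, and one shows such a field is incompatible with the invertibility of $D_{f,J}$ together with the elementary structure of the branch locus. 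For the multiplicity-one pieces this is just the elliptic estimate $\|s_n\|\le\|D_{f,J}^{-1}\|\,\|Q(s_n)\|\le c\|s_n\|^2$, which forces the normal section $s_n$ to vanish once $\|s_n\|$ is small; the higher-multiplicity and handle cases are the genuinely delicate ones. Granting this, the nontrivial-$\sigma$ case cannot occur, so $\Sigma_n=\Sigma$ for large $n$, i.e. $d_H(f_n,f)=0$; and since every subsequence of $\{f_n\}$ admits a further subsequence along which this holds, it holds for all large $n$, the desired contradiction.
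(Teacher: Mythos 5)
Your part (a) is essentially the paper's argument -- factor into a simple map using \cite[\S 2.5]{ms}, then use Definition~\ref{1.defJisol} to force all the simple pieces to be embeddings onto one and the same curve -- and it is correct apart from a small slip: since the images $\phi_i(C_i')$ all coincide with a single embedded curve, you should identify each $C_i'$ with that smooth irreducible curve rather than ``patch'' them into a nodal $C_{red}$ (an embedding $\phi$ cannot have a genuinely nodal domain).

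Part (b) has a genuine gap, and it is exactly the step you flagged. Two problems. First, $J\in\Ji^E$ does \emph{not} give regularity: Definition~\ref{1.defJisol} only asks that simple maps be isolated embeddings with disjoint images, so you are not entitled to the invertibility of $D_{f,J}$ (Lemma~\ref{Lemma1.1} concerns the smaller Baire set $\J^*_E$; indeed the paper later invokes this lemma for $J\in\Ji^E$ lying on the wall, where $\ker D_{f,J}\ne 0$, cf.\ Lemma~\ref{4.Jisol.e.0}). Second, and more seriously, the program you propose for the ``nontrivial $\sigma$'' case -- showing that embedded curves of higher degree or genus cannot accumulate onto $\Sigma$ by a Taubes-style obstruction/Jacobi-field analysis -- cannot succeed: such accumulation is not excluded for $J\in\Ji^E$, and the paper's framework is explicitly built to accommodate it (Lemma~\ref{Lemma1.5} and the proof of Lemma~\ref{L.cluster.exists} allow an embedded curve to be an accumulation point of curves representing $k[C]$, $k\ge 2$, or of higher genus; excluding this would be a super-rigidity statement, valid only for the special $J$ of Section~3). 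The paper's proof never tries to rule this case out. It runs the dichotomy on the Gromov limit: by Gromov compactness a subsequence of the $f_n$ converges in the Gromov topology to a map with image $f(C)$; were that limit simple, it would be an isolated point of $\oM^{J,E}(X)_{simple}$ in the Gromov topology, forcing the simple maps nearby to coincide with it and contradicting $d_H(f_n,f)\ne 0$; hence the limit is not simple, i.e.\ its factorization has $\deg\rho>1$ or a genus drop -- and that non-simplicity \emph{is} the conclusion, not an obstacle to be removed. This is also how the lemma is applied (proofs of Lemmas~\ref{Lemma1.5} and \ref{L.cluster.exists}), with $f$ the Gromov limit of the sequence. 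Your contrapositive, which fixes a simple $f$ and tries to forbid multiple covers from accumulating onto its image, aims at a strictly stronger statement that the hypotheses do not support; so the missing step is not merely unproven, it is the wrong target.
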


\begin{proof}
The  disjoint union of the irreducible components of $C$  is   a smooth closed curve $\wt{C}$ called the normalization of $C$.  Let $\wt{f}:\wt{C}\to X$ be the composition of the  canonical map $\wt{C}\to C$ with $f$.   The arguments of \cite[\S 2.5] {ms} show that $\wt f$ factors as $\wt f =\phi\circ \wt\rho$ where
$\phi: C_{red}\ra X$ is a simple $J$-holomorphic map from a  smooth, possibly disconnected domain $C_{red}$,  and $\wt\rho:\wt C \ra C_{red}$ is a map of complex curves; $\wt\rho$ may take some components of $\wt C$ to points. But the assumption $J\in \J^E_{isol}$ implies that  $C_{red}$ is connected and $\phi$ is an embedding; $\wt\rho$ then descends to a holomorphic map $\rho:C \to C_{red}$, unique up to reparametrization, with $f=\phi\circ\rho$.

Part (b) immediately follows from Gromov compactness and the fact that simple maps are isolated for $J\in  \J^E_{isol}$, and therefore the limit  map $f=\phi\circ\rho$ is not simple.\end{proof}

To each class $A\in H_2(X, \Z)$ in the positive cone $\om(A)>0$ we associate a positive integer 
\best
d(A)= \ell cm\; \big\{\; k \in \mathbb N_+ \; \big|\; A= k B \mbox{ where  } B\in H_2(X, \Z)\;  \big\} 
\eest 
called the {\em degree} of $A$, where $\ell cm$ denotes the lowest common multiple. Let $\Omega(d)$ be the number of prime factors of $d$, counted with multiplicity.
For any map $f$ from a genus $g$ curve representing a degree $d$ class, we define its {\em  level} to be  
 \bear
 \label{defLevel}
 \ell(f) = \Omega(d)+g.
 \eear

The components of the moduli space are filtered by the degree and genus, and therefore by their level;  the level  filtration will be used frequently in later sections.  For each $E$, the  sets
\bear
\label{2.defM_m}
 \oM^E_{m}(X)\ =\    \big\{ (f, J)\in \oM^E(X)\  \big|\  \ell(f)\le m  \big\}
\eear
filter $\oM^E(X)$, and their images under (\ref{1.maptoC}) 
$$
 \C^E_{m}\ =\ c\left(\oM^E_{m}(X)\right)
 $$
filter the image  $\C^E=c(\oM^E(X))$. 

For each $J\in \J_{isol}^E$,  the map $c$, when applied to multiply-covered maps, decreases the level but respects the filtration; for such   $J$  the fiber $\C^{J,E}_m$  of $\C^E_m\to \J$  is the collection of embedded $J$-holomorphic curves with level at most $m$. In this notation, $m=0$ corresponds to genus zero curves representing primitive classes,  and  $ \C^E_{m}$ is the collection of embedded pseudo-holomorphic curves in $X$ with $g+\Omega(d) \le m$ and energy at most $E$. 
\begin{lemma} 
\label{Lemma1.5} 
For any fixed $J\in \J^{E}_{isol}$, 
\begin{enumerate}[(a)]
\item $\C_m^{J, E}\subseteq \C_{m+1}^{J, E}$ is a filtration of $\C^{J, E}$  with $\C^{J, E}_{m}=\C^{J, E}$ for $m$ large.
\item $\C_{m}^{J, E}$ and $\C^{J, E}=\bigcup \C_{m}^{J, E}$ are compact countable subsets of the metric space $\C(X)$.\smallskip
\item For any neighborhood $U$ of $\C_{m-1}^{J, E}$,  the set $\C_{m}^{J, E}\setminus U$ is a finite collection of embedded  $J$-holomorphic curves.
\end{enumerate}
In particular, there are finitely many genus zero $J$-holomorphic curves with energy less than $E$ representing primitive classes. 
\end{lemma}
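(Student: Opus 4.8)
The plan is to prove the statements in the order: compactness (part of (b)), then (a), then (c), then countability (the rest of (b)), with the final sentence following from (c). Fix $J\in\Ji^E$. The basic input is Lemma~\ref{Lemma1.4}(a): the image $c(f)=\phi(C_{red})$ of any $J$-holomorphic map $f:C\to X$ below energy $E$ is an embedded $J$-holomorphic curve, and its canonical parametrization $\phi$ is itself a simple embedded map below energy $E$, since $\mathrm{genus}(C_{red})\le\mathrm{genus}(C)\le E$ and $\w(\phi_*[C_{red}])\le\w(f_*[C])\le E$. Thus $\C^{J,E}$ is exactly the set of embedded $J$-holomorphic curves below energy $E$, and (as already noted before the lemma) $\C^{J,E}_m$ is the subset of those of level $\le m$; the point is that for $f=\phi\circ\rho$ with $f_*[C]=k\,\phi_*[C_{red}]=:kB$ and $k=\deg\rho$ one has $\ell(\phi)\le\ell(f)$, because $k\,d(B)$ divides $d(A)$ and $\mathrm{genus}(C_{red})\le\mathrm{genus}(C)$. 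For compactness: $\oM^E_m(X)$ is the union of the components $\oM_{A,g}(X)$ with $E(A,g,0)\le E$ and $\Omega(d(A))+g\le m$, of which there are finitely many, since Gromov compactness bounds the number of classes $A$ with $\w(A)\le E$ carrying a $J$-holomorphic curve and $g\le E$; each such component is compact, $c$ is continuous, so $\C^{J,E}_m$ and $\C^{J,E}$ are compact subsets of $\mathrm{Subsets}(X)\times\{J\}$.

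Part (a): the inclusion $\C^{J,E}_m\subseteq\C^{J,E}_{m+1}$ is immediate from $\oM^E_m(X)\subseteq\oM^E_{m+1}(X)$. For the stabilization, every embedded $J$-holomorphic curve below energy $E$ has genus $\le E$ and represents one of the finitely many classes with $\w\le E$ carrying a $J$-holomorphic curve; since each such class has a fixed finite degree, the level is bounded by some $M$, so $\C^{J,E}_m=\C^{J,E}$ for $m\ge M$.

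Part (c) is the crux. Given a neighborhood $U$ of $\C^{J,E}_{m-1}$, the set $\C^{J,E}_m\setminus U$ is closed in the compact set $\C^{J,E}_m$, hence compact, and consists of embedded $J$-holomorphic curves; I claim it is finite. If not, pick infinitely many distinct $C_n\in\C^{J,E}_m\setminus U$; those of level $<m$ lie in $\C^{J,E}_{m-1}\subseteq U$, so all $C_n$ have level exactly $m$, and after passing to a subsequence they are parametrized by simple maps $f_n$ in a single component $\oM^J_{A,g}(X)$ with $\Omega(d(A))+g=m$. By Gromov compactness a subsequence of $(f_n)$ converges to some $f_\infty\in\oM^J_{A,g}(X)$; since Gromov convergence gives Hausdorff convergence of images, $C_n\to C_\infty:=f_\infty(\mathrm{dom})$, so $C_\infty\in\C^{J,E}_m\setminus U$. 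The $C_n$ being distinct, we may assume $d_H(C_n,C_\infty)\ne 0$ for all $n$, so Lemma~\ref{Lemma1.4}(b) applies: in $f_\infty=\phi\circ\rho$ either $\deg\rho\ge 2$ or $\mathrm{genus}(C_{red})<g$. In the first case $\Omega(d(A))\ge\Omega\big((\deg\rho)\,d(B)\big)\ge\Omega(d(B))+1$; in the second, $\deg\rho=1$ forces $A=B$, hence $d(A)=d(B)$, while $\mathrm{genus}(C_{red})\le g-1$. Either way $\ell(\phi)\le m-1$, so $C_\infty=\phi(C_{red})\in\C^{J,E}_{m-1}\subseteq U$, contradicting $C_\infty\notin U$. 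Hence $\C^{J,E}_m\setminus U$ is finite.

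Countability then follows from (c) by induction on $m$: $\C^{J,E}_{-1}=\emptyset$, so $\C^{J,E}_0$ is finite; and if $\C^{J,E}_{m-1}$ is countable then, using its compactness, $\C^{J,E}_m\setminus\C^{J,E}_{m-1}=\bigcup_{n\ge 1}\big(\C^{J,E}_m\setminus\{x:d(x,\C^{J,E}_{m-1})<1/n\}\big)$ is a countable union of finite sets, so $\C^{J,E}_m$ is countable, and by (a) so is $\C^{J,E}=\C^{J,E}_M$. Finally, a curve has level $0$ iff it has genus $0$ and represents a class $A$ with $d(A)=1$, i.e.\ a primitive class, so $\C^{J,E}_0$ is the set of genus zero $J$-holomorphic curves below energy $E$ in primitive classes, which is finite. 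The only genuinely nontrivial step is (c): converting a Gromov degeneration of pairwise distinct embedded curves of level $\le m$ into a strict drop of level, which rests on Lemma~\ref{Lemma1.4}(b) together with the arithmetic fact $k\,d(B)\mid d(A)$; compactness and countability are then essentially formal consequences.
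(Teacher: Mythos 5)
Your proof is correct and follows essentially the same route as the paper's: compactness of $\C_m^{J,E}$ as the continuous image of the compact fiber $\oM_m^{J,E}(X)$, an accumulation-point argument via Lemma~\ref{Lemma1.4} showing the limit curve drops to level $\le m-1$ and hence lies in the open set $U$ (contradiction) for part (c), and the $1/k$-neighborhood trick plus induction for countability. The only difference is that you spell out details the paper leaves implicit (the arithmetic $k\,d(B)\mid d(A)$ behind the strict level drop, and the passage from Hausdorff convergence of images to a Gromov limit map), which is fine.
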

\begin{proof} 
The inclusion in (a) is true by definition, and the  second part of (a) holds  because, by Gromov compactness,
  only finitely many homology classes are represented by $J$-holomorphic maps below energy $E$. 

Next, each set $\C_{m}^{J, E}$ is compact because it is the  image of a compact set, namely the fiber $\oM^{J, E}_{m}(X)$ of  (\ref{2.defM_m}),  under the continuous map (\ref{1.maptoC}). Then $\C_{m}^{J, E}\setminus U$ is a closed subset of the compact metric space $\C_{m}^{J, E}$, so any infinite sequence $\{C_i\}$ has an accumulation point $C_0$.  Because only finitely many homology classes are represented by $J$-holomorphic maps below energy $E$ we may assume, after passing to a subsequence, that they all have the same genus and same homology class $[C_i]=k\beta$ for the same primitive class $\beta$ and the same $k$  with $\Omega(k)+g\le m$.  By Lemma~\ref{Lemma1.4} the limit is a multiple cover of a curve of a strictly lower level, but that's impossible  because $U$ is open. Thus $\C_{m}^{J, E}\setminus U$ is finite. Finally, taking $U_k$ to be the $1/k$ tubular neighborhood of $\C^{J, E}_{m-1}$,  we conclude that $\C_{m}^{J, E}\setminus\C_{m-1}^{J, E} = \bigcup_k\left(\C_{m}^{J, E}\setminus U_k\right)$ is countable, and hence $\C^{J, E}_m$ and $\C^{J, E}$ are countable. 
\end{proof}

 \vspace{5mm}
\setcounter{equation}{0}
\section{Clusters in symplectic manifolds} 
\label{section2}
\bigskip

Now suppose that $X$ is a symplectic Calabi-Yau 6-manifold (we retain this assumption  until Section~4).
For each subset $S\subset \J$, we can consider the moduli space $\oM^S(X)=\pi^{-1}(S)$ over $S$ or, with an energy bound,  
$$
\oM^{S, E}(X)=\Big\{  ([f],J)\ \Big|\ [f]\in \oM^{J,E}(X),\  J\in S  \Big\}.
$$ 
A {\em decomposition}  of the moduli space $\oM^{S, E}(X)$  is a way of writing it as a finite disjoint union $\bigsqcup_i \O_i$ of subsets $\O_i$ that are both open and closed  in the Gromov topology.  Given such a decomposition and a  compact subset $V$ of $S$, the sets $\{ \O_i\cap \pi^{-1}(V)\}$ are a decomposition of $\oM^{V, E}(X)$, giving a natural isomorphism
 \bear\label{hom.decomp}
 \cH^*\left(\oM^{V, E}(X)\right)^\vee\ \cong\ \bigoplus_i\, \cH^*(\O_i\cap \pi^{-1}(V))^\vee.
 \eear
Note that for every $J\in S$ and every $A, g$ with energy at most $E$, the inclusion  $ \oM^{J}_{A, g}(X) \hookrightarrow \oM^{J, E}(X)$ induces a map
\best
\cH^*\Big( \oM^{J}_{A, g}(X) \Big)^\vee\ra \cH^*\Big( \oM^{J, E}(X)\Big)^\vee.
 \eest
 whose image is the lefthand side of \eqref{hom.decomp} with  $V=\{J\}$.  
The  image of virtual fundamental class $ [\ov\M^J_{A, g}(X)]^{vir}$ then decomposes under \eqref{hom.decomp} into a sum of components 
$$
pr_i [\ov\M^J_{A, g}(X)]^{vir}\in \cH^*(\O_i\cap \pi^{-1}(J))^\vee.
$$
 As in \eqref{gw.ag},  we set 
 \bear\label{defGW.ag}
GW_{A, g}(\O_i\cap \pi^{-1}(J))=\lg \;pr_i  [\ov\M^J_{A, g}(X)]^{vir} , 1\;\rg \in \Q
\eear
and define the {\em contribution} of $\O_i\cap \pi^{-1}(J)$ to the GW series to be  the sum of the form \eqref{1.GWformula.number} whose
 coefficients are given by \eqref{defGW.ag} for all $A, g$ with energy at most $E$, and are 0 otherwise. Then 
\bear
\label{2.GWsum}
 GW^E(X)\ =\ \sum_i  GW^E(\O_i\cap \pi^{-1}(J))
\eear
where, on both sides, $GW^E$ denotes  the $GW$ series truncated at energy $E$.

Equation \eqref{defm.invar} implies that the coefficients \eqref{defGW.ag} are  deformation invariant, as follows.  For any path $\gamma$ in $S$ from 
$J_0$ to $J_1$, and any $A, g$ with energy at most $E$, we can take $V$ to be the image of $\gamma$, and consider the map 
$$
\ov{\M}^\gamma_{A,g}(X) \ra  \ov{\M}^{V,E}(X) 
$$
induced by $(t, [f], J)\mapsto ([f], J)$.  The equality  \eqref{defm.invar}  pushes forward by this map to give an equality in the  group  on the lefthand side of \eqref{hom.decomp}. Applying the isomorphism \eqref{hom.decomp} and projecting onto the $i$'th component then shows that  the coefficients \eqref{defGW.ag} for $J=J_0$ and $J=J_1$ are equal.

 \medskip

 \begin{lemma}
 \label{lemma2.1} 
Each open subset $U$ of  $\C(X)$ with $\bd U \cap \C^{J, E} = \emptyset$ has a well-defined contribution $GW^{E} (U, J)$  to $GW^E(X)$.  The collection of $J$ for which  $\bd U \cap \C^{J, E} = \emptyset$ is open in   $\J$, and the contribution $GW^{E} (U, J)$ is locally constant as a function of $J$.  
 \end{lemma}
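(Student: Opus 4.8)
The plan is to use $U$ to carve out an open-and-closed ``cluster'' in each relevant fiber of $\pi$, and then invoke two standard properties of the virtual fundamental class already implicit in this section: additivity over a clopen decomposition, as in \eqref{hom.decomp}--\eqref{2.GWsum}, and cobordism invariance (the basis for $J$-independence of $GW$).

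First I fix $J$ with $\bd U\cap\C^{J,E}=\emptyset$ and check that $A:=U\cap\C^{J,E}$ is both open and closed in $\C^{J,E}$. Openness is automatic. For closedness, take closures in $\C(X)$: since $A\subseteq U$ we get $\overline A\subseteq\overline U=U\cup\bd U$ (as $U$ is open), so the closure of $A$ inside $\C^{J,E}$, which is $\overline A\cap\C^{J,E}$, is contained in $(U\cup\bd U)\cap\C^{J,E}=A$ because $\bd U\cap\C^{J,E}=\emptyset$. Since the rough topology on the fiber $\oM^{J,E}(X)$ is by construction pulled back along $c$, the set $\O^J:=c^{-1}(U)\cap\oM^{J,E}(X)$ is then open and closed in $\oM^{J,E}(X)$, which is compact by Gromov compactness (hence also in the coarser rough topology). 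Thus $\{\O^J,\ \oM^{J,E}(X)\setminus\O^J\}$ is a decomposition in the sense of the beginning of this section, and applying \eqref{hom.decomp}--\eqref{2.GWsum} to it defines the contribution $GW^E_*(U,J):=GW^E_*(\O^J)$; this is the first assertion.

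For the openness of $\mathrm{Good}:=\{J:\bd U\cap\C^{J,E}=\emptyset\}$ in the $C^0$ topology it suffices, since that topology is metrizable, to show the complement is sequentially closed. Let $J_k\to J_0$ in $C^0$ with each $J_k\notin\mathrm{Good}$, and choose $(f_k,J_k)\in\oM^{J_k,E}(X)$ with $c(f_k,J_k)\in\bd U$. Because $X$ is closed, $\mathrm{Subsets}(X)$ is compact (Blaschke selection), so $\mathrm{Subsets}(X)\times(\{J_k\}_k\cup\{J_0\})$ is compact in $\C(X)$; its preimage under the proper map $c$, intersected with $\oM^E(X)$, is compact and contains all $(f_k,J_k)$, so a subsequence converges in the Gromov topology to a limit $(f_\infty,J_0)\in\oM^{J_0,E}(X)$. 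Continuity of $c$ together with $\bd U$ being closed gives $c(f_\infty,J_0)\in\bd U\cap\C^{J_0,E}$, so $J_0\notin\mathrm{Good}$. I expect this to be the main point: one must let the almost complex structure vary along the limiting sequence, and the reason this is harmless is exactly that $\mathrm{Subsets}(X)$ is compact, so the only potentially noncompact direction is the one already controlled by the established properness of $c$.

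Finally, local constancy follows from the cobordism property. Given $J_0\in\mathrm{Good}$, the openness just proved and local path-connectedness of $\J$ let me join $J_0$ to any nearby $J_1$ by a smooth arc $\{J_t\}_{t\in[0,1]}$ lying in $\mathrm{Good}$. Set $\C^{[0,1],E}:=c\big(\oM^{[0,1],E}(X)\big)=\bigcup_{t}\C^{J_t,E}$; repeating the closure computation above, now using $\bd U\cap\C^{J_t,E}=\emptyset$ for every $t$, shows $U\cap\C^{[0,1],E}$ is clopen in $\C^{[0,1],E}$, hence $\O^{[0,1]}:=c^{-1}(U)\cap\oM^{[0,1],E}(X)$ is open and closed in the compact moduli space over the arc. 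Therefore $\O^{[0,1]}$ carries a virtual fundamental class whose two ends are those of $\O^{J_0}$ and $\O^{J_1}$; this is precisely the standard cobordism argument for $J$-independence of Gromov--Witten invariants applied to the clopen piece, and it yields $GW^E_*(\O^{J_0})=GW^E_*(\O^{J_1})$. Hence $GW^E_*(U,J)$ is constant on each path-component of $\mathrm{Good}$, in particular locally constant in $J$.
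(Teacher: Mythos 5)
Your proposal is correct and follows essentially the same route as the paper: the clopen decomposition $c^{-1}(U)\sqcup c^{-1}(U^c)$ of the fiber defines $GW_*^{E}(U,J)$, openness of the condition $\bd U\cap\C^{J,E}=\emptyset$ is proved by the same Gromov-compactness limiting argument along a $C^0$-convergent sequence $J_k\to J_0$, and local constancy comes from extending the clopen decomposition over a connected family of almost complex structures and invoking deformation invariance. The only cosmetic difference is that the paper extends the decomposition over an entire $C^0$ ball $V$ of good parameters rather than along a chosen smooth arc, which avoids the path-connectedness step but is otherwise the same argument.
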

 \begin{proof} The assumption implies that the intersections of both $U$ and its complement $U^c$ with 
 $\C^{J, E}$ are open subsets of the image $\C^{J, E}$ of $c$. Since $c$ is continuous,  $\O=c^{-1}(U\cap \C^{J, E})$ and $\O^c=c^{-1}(U^c \cap \C^{J, E})$ are  open and closed subsets of $\oM^{J,E}(X)$. Define  $GW^E(U, J)$ to be the contribution $GW^E(\O\cap \pi^{-1}(J))$ of $\O$ to the sum \eqref{2.GWsum} associated to the decomposition $\O\sqcup \O^c$.

 Next note that the condition $\bd U \cap  \C^{J, E} =  \emptyset$ is an open condition on $J$.  To see why, fix $U$ and $E$.  If   this condition held for some $J$, but failed to hold for a sequence  $J_k\ra J$ in $\J$, there would be a sequence of $J_k$-holomorphic curves $C_k$ in $\bd U$ with energy $E(A,g,n)$ bounded by $E$.  Applying Gromov compactness, one could find  a subsequence converging to a $J$-holomorphic curve $C$.  But  $\bd U$ and $\C^{J, E}$ are both closed in the Gromov topology,  so the limit curve $C$ would lie in $\bd U \cap  \C^{J, E}$, contradicting the hypothesis.

Therefore there exists a  ball $V$ around $J$ in $\J$ such that  $\bd U \cap \C^{E,V} = \emptyset$. This gives rise to the same decomposition $\O\sqcup \O^c$ but now of the moduli space 
$\oM^{V, E}(X)$ over the entire ball $V$, therefore the contribution of $U$ is defined for each $J\in V$ and it is constant on $V$. 
  \end{proof}
 
The geometric content of the contribution  $GW^{E} (U, J)$ is most clearly seen by choosing $U$  of the form $B(C,\ep)\ti \J$ for  a ball  $B(C,\ep)$ of small radius $\ep$ in the Hausdorff distance centered at a $J$-holomorphic curve $C$.  The    subset   of $ \oM^J(X)$ that lies in $c^{-1}(U)$ is then a collection of $J$-holomorphic maps whose images are uniformly $\ep$-close to $C$ in $X$.   We will call such a collection a cluster if the following properties hold.
\begin{defn}
\label{D.cluster} 
 A {\em cluster} $\O=(C, \ep, J)$ in $X$ below energy $E$ (an ``$E$-cluster'') consists  of an almost complex structure $J\in \J$, an embedded $J$-holomorphic curve $C$ and a radius $\ep>0$ with the following properties:
\begin{enumerate}[(a)]
\item All  non-constant $J$-holomorphic maps in the ball $B(C, \ep)$ with energy $\le E$ represent $k[C]$ for some $k\ge 1$, and have genus $g\ge g(C)$; 
\item $C$ is the only $J$-holomorphic map in its degree and genus in the ball $B(C, \ep)$.
\item There are no $J$-holomorphic curves with energy  $\le E$ at precisely $\ep$ Hausdorff distance from  $C$.
\end{enumerate}
\end{defn}
The curve $C$ is called the {\em core} of the cluster.    Note that, by the definition of Hausdorff distance, curves in $B(C,\ep)$ lie  in the $\ep$-tubular neighborhood of $C$ in $X$.

The next lemma shows that small balls in the Hausdorff metric are often clusters.  In fact, conditions (a) and (b) of Definition~\ref{D.cluster}  are automatic for small $\ep$ when $C$ is regular.  Condition~(c) is the important one: it  implies that $\O=(C, \ep, J)$  has a well-defined contribution 
\bear\label{GW(O)inQ}
GW^E(\O)\in  \Lambda.
\eear

\begin{lemma}[Cluster existence]
\label{L.cluster.exists} 
For each $J\in \J_{isol}^{E}(X)$ and each  simple $J$-holomorphic curve $C$,  the set $S$ of $\ep>0$ for which the ball $B(C,\ep)$  is an $E$-cluster is open and dense in a non-empty interval $[0,\ep_C]$, and  the contribution \eqref{GW(O)inQ} is locally constant on $S$. 
\end{lemma}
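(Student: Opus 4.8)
The plan is to establish the three conditions of Definition~\ref{D.cluster} for all sufficiently small $\ep$, then analyze how condition (c) can fail as $\ep$ varies. First I would fix $J\in\Ji^E$ and a simple $J$-holomorphic curve $C$; by Definition~\ref{1.defJisol} and Lemma~\ref{Lemma1.4}(a), $C$ is an embedding and isolated among simple maps. For condition (b): since $C$ is isolated in $\C^{J,E}$ among curves of its own degree and genus (this is exactly the $m$-level statement at the top of Lemma~\ref{Lemma1.5}, plus Lemma~\ref{Lemma1.1}(a)--(b) giving that $C$ is a regular embedding), there is a Hausdorff-ball $B(C,\ep)$ containing no other $J$-holomorphic curve of the same degree and genus; this holds for all $\ep$ below some threshold. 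For condition (a): any $J$-holomorphic map with image in $B(C,\ep)$ is, by Lemma~\ref{Lemma1.4}(a), a composite $\phi\circ\rho$ with $\phi$ an embedding; as $\ep\to 0$ the image of $\phi$ Hausdorff-converges into $C$, and by Gromov compactness (properness of $c$) together with the fact that $C$ is the unique nearby embedded curve, $\phi$ must be a reparametrization of $C$ once $\ep$ is small; hence the map represents $k[C]$ and $\rho:C'\to C$ forces $g(C')\ge g(C)$ by Riemann-Hurwitz. So there is a non-empty interval $(0,\ep_0]$ on which (a) and (b) hold.

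Next I would handle condition (c) and the structure of the set $S$. Define $\ep_C$ to be the supremum of $\ep_0\le$ (the threshold from (a),(b)) such that the distance-set $D=\{\, d_H(C,C') : C'\in\C^{J,E},\ C'\ne C\,\}\subset(0,\infty)$ meets $(0,\ep_0]$ in... more precisely, $S$ is the set of $\ep\in(0,\ep_C]$ with $\ep\notin D$. By Lemma~\ref{Lemma1.5}(b), $\C^{J,E}$ is a compact countable metric space, so $D$ is a countable subset of $(0,\infty)$; its complement in $(0,\ep_C]$ is therefore dense, and since $\C^{J,E}$ is compact and $C\notin\C^{J,E}_{\text{accumulating at }C}$... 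I must be careful here: $D$ need not be closed because curves can accumulate onto $C$, so $0$ may be a limit point of $D$. The key point is that $D$ is \emph{countable}, hence $(0,\ep_C]\setminus D$ is a dense $G_\delta$-like set, in particular dense; and it is open in $(0,\ep_C]$ since for a fixed $\ep\notin D$, the argument of Lemma~\ref{lemma2.1} (Gromov compactness applied to curves at Hausdorff-distance near $\ep$) shows no curve sits at distance exactly $\ep'$ for $\ep'$ in a small interval around $\ep$. So $S$ is open and dense in $(0,\ep_C]$, and non-empty.

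Finally, local constancy of the contribution \eqref{GW(O)inQ} on $S$: this is immediate from Lemma~\ref{lemma2.1} applied to $U=B(C,\ep)\times\J$. For $\ep\in S$ we have $\bd U\cap\C^{J,E}=\emptyset$, so $GW^E_*(U,J)=GW^E_*(\O)$ is defined; and Lemma~\ref{lemma2.1} already states it is locally constant in $J$, but here the relevant variation is in $\ep$. For two radii $\ep<\ep'$ in the same connected component of $S$, no curve of $\C^{J,E}$ lies in the closed annulus $\{\ep\le d_H(\cdot,C)\le\ep'\}$ (otherwise some intermediate radius would lie in $D$, contradicting that $[\ep,\ep']\subset S$), so $c^{-1}(B(C,\ep))$ and $c^{-1}(B(C,\ep'))$ are the same open-closed subset of $\oM^{J,E}(X)$, giving the same class in $\cH_0$. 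Hence the contribution is locally constant on $S$.

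The main obstacle is the delicate point that the distance-set $D$ need not be closed: accumulation of $J$-holomorphic curves onto $C$ (the phenomenon emphasized right after Lemma~\ref{Lemma1.1}) means $0$ can be an accumulation point of $D$, so one cannot simply take $\ep_C$ to be "the distance to the nearest other curve." The resolution is to lean on \emph{countability} of $\C^{J,E}$ from Lemma~\ref{Lemma1.5}(b) rather than discreteness, which gives density of $S$ without controlling the accumulation, together with the compactness argument of Lemma~\ref{lemma2.1} to get openness of $S$ and the "no curve at distance exactly $\ep$" stability. The verification that conditions (a),(b) genuinely hold for \emph{all} small $\ep$ (not just generic ones) — i.e. that regularity of $C$ forces nearby holomorphic maps to factor through $C$ — is where Lemma~\ref{Lemma1.4} and Gromov compactness do the real work, and should be stated carefully but is routine given those inputs.
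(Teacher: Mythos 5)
Your handling of condition (c), of the openness and density of $S$, and of the local constancy of the contribution (two radii in one component of $S$ bound an annulus containing no curves, so $c^{-1}(B(C,\ep))$ is literally the same open--closed subset of $\oM^{J,E}(X)$) is essentially the paper's argument: the paper gets openness by observing that the image of the distance function is compact as well as countable, while you get it from Gromov compactness directly, which is the same fact. The genuine problem is your argument for condition (a) of Definition~\ref{D.cluster}. You assert that for small $\ep$ the curve $C$ is ``the unique nearby embedded curve'', and conclude that the reduced embedding $\phi$ of \emph{every} map with image in $B(C,\ep)$ is a reparametrization of $C$. That intermediate claim is false in general, and it is precisely the phenomenon this paper is built around: for $J\in\Ji^E$ the simple maps are only isolated in the \emph{Gromov} topology, and embedded $J$-holomorphic curves representing $k[C]$ with $k\ge 2$ and genus larger than $g(C)$ may accumulate onto $C$ in the Hausdorff metric. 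This is why $\C^{J,E}$ is only countable and compact rather than discrete, why Definition~\ref{D.cluster}(a) merely constrains the class and genus of nearby maps instead of forcing them to factor through $C$, and why the much stronger statement you are claiming is singled out as condition (b) of Definition~\ref{3.D.elem}, which requires Lee's special almost complex structure. So for a nearby map $f=\phi\circ\rho$ the embedding $\phi$ need not be $C$ no matter how small $\ep$ is, and your Riemann--Hurwitz step has nothing to apply to.

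The repair is the compactness argument applied to the maps $f$ themselves rather than to their reductions: if (a) failed for every $\ep=1/n$, choose violating maps $f_n$ with images within Hausdorff distance $1/n$ of $C$; by Gromov compactness a subsequence converges, and by continuity of $c$ the limit map has image exactly $C$, hence by Lemma~\ref{Lemma1.4}(a) it factors through $\iota_C$; since homology class and arithmetic genus are preserved under Gromov convergence, the $f_n$ eventually represent $k[C]$, $k\ge 1$, with genus at least $g(C)$ --- a contradiction, and one that never claims the $f_n$ themselves are covers of $C$. Your argument for condition (b) is correct in spirit but should be phrased the same way (isolation of $\iota_C$ in the Gromov topology plus a limiting argument via Lemma~\ref{Lemma1.4}(b)), since being an isolated point of $\M^{J,E}_{simple}$ in the Gromov topology does not by itself give isolation in the rough topology; the citation of Lemma~\ref{Lemma1.5} is not the relevant input there.
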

\begin{proof} 
By definition, for any $J\in \J_{isol}^{E}(X)$, all simple $J$-holomorphic curves are embedded and isolated in their degree and genus. Since $\C^{J, E}$ is compact, and an embedded curve $C$ can appear as an accumulation point only of curves representing $k[C]$ and having genus at least that of $C$, Lemma~\ref{Lemma1.4} implies that there is an $\ep_C>0$ such that  conditions (a) and (b) of Definition~\ref{D.cluster}  hold for all $\ep\le \ep_C$.

Finally, by Lemma~\ref{Lemma1.5}, the image of $\C^{J,E}$ under the distance function $B(C,\ep_C)\to [0,\ep_C]$ is countable and compact, so its complement -- which is the set of $\ep$  that satisfy condition (c) of  Definition~\ref{D.cluster} -- is open and dense.
\end{proof}
  
\begin{prop}[Cluster decompositions]
\label{P.cluster.decomp.exist} 
Given $E$ and  $J\in \J_{isol}^E$, each open subset $U$ of  $\C(X)$ such that $\bd U \cap \C^{J, E}=\emptyset$ has a finite $E$-cluster decomposition $\{\O_i=(C_i, J,\ep_i)\}$, and hence
\bear
\label{GW(U)=sum} 
 GW^E(U)\ =\ \sum_i  GW^E(\O_i).  
\eear
\end{prop}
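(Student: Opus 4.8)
The plan is to build the cluster decomposition of $U$ by covering the relevant part of $\C^{J,E}$ with finitely many Hausdorff balls, each of which is a cluster in the sense of Definition~\ref{D.cluster}, and then disjointifying them while keeping track of the level filtration. First I would note that by Lemma~\ref{lemma2.1} the set $\O=c^{-1}(U)$ is open and closed in $\oM^{J,E}(X)$, so it suffices to decompose $\O$ itself; equivalently, to decompose the compact set $U\cap\C^{J,E}$ inside $\C(X)$. By Lemma~\ref{Lemma1.5}(a),(b), $\C^{J,E}=\C^{J,E}_M$ for some finite $M$, and each $\C^{J,E}_m$ is a compact countable set; so I would proceed by induction on the level $m$, decomposing $U\cap\C^{J,E}_m$ into clusters modulo the part that already lies near $\C^{J,E}_{m-1}$.

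The key step is the base of a ``thinning'' argument. Fix $m$. By Lemma~\ref{Lemma1.5}(c), for any neighborhood $V$ of $\C^{J,E}_{m-1}$ the set $\C^{J,E}_m\setminus V$ is a \emph{finite} collection $\{C_1,\dots,C_r\}$ of embedded curves. For each $C_i$, Lemma~\ref{L.cluster.exists} (cluster existence) provides a radius $\ep_i>0$, which can be taken arbitrarily small, for which $B(C_i,\ep_i)$ is an $E$-cluster $\O_i=(C_i,J,\ep_i)$; shrinking the $\ep_i$, I would arrange that the balls $B(C_i,\ep_i)$ are pairwise disjoint in $\C(X)$ (possible because distinct embedded curves of the same or lower level are at positive Hausdorff distance, using compactness of $\C^{J,E}$ again), that each $B(C_i,\ep_i)\subset U$, and that $\bd B(C_i,\ep_i)\cap\C^{J,E}=\emptyset$. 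Since these finitely many balls together with $V$ cover $\C^{J,E}_m$, the complement $W_m:=U\setminus\bigcup_i \ov{B(C_i,\ep_i)}$ is an open set whose intersection with $\C^{J,E}$ lies in $\C^{J,E}_{m-1}\cup(\text{curves of level }>m)$, and whose boundary avoids $\C^{J,E}$. Applying the inductive hypothesis to $W_m$ (its portion of $\C^{J,E}_{m-1}$ decomposes into clusters, and the higher-level part is handled by the same construction at higher $m$), and noting that $\ov{B(C_i,\ep_i)}$ can be replaced by a slightly smaller open cluster ball with the same core and a nearby admissible radius (Lemma~\ref{L.cluster.exists} again, to keep condition (c)), one obtains a finite disjoint open-and-closed cover of $U\cap\C^{J,E}$ by cluster balls. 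Pulling back under $c$ and invoking the additivity \eqref{2.GWsum} of the GW series over decompositions into open-and-closed pieces yields \eqref{GW(U)=sum}.

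I expect the main obstacle to be the disjointification: the cluster balls $B(C_i,\ep_i)$ constructed independently around the finitely many surviving core curves need not be disjoint, nor need their union's complement have boundary avoiding $\C^{J,E}$. Handling this requires simultaneously (i) using the positive mutual Hausdorff distance between the finitely many level-$m$ curves outside $V$ to choose the $\ep_i$ small enough to separate them, (ii) invoking Lemma~\ref{L.cluster.exists} to slide each $\ep_i$ to a nearby value in the open dense set $S$ of admissible radii so that no $J$-holomorphic curve of energy $\le E$ sits exactly at Hausdorff distance $\ep_i$ from $C_i$ and so that the residual region $W_m$ also has $\bd W_m\cap\C^{J,E}=\emptyset$, and (iii) making sure the induction on level actually terminates, which it does because $\C^{J,E}_m=\C^{J,E}$ for $m\ge M$ by Lemma~\ref{Lemma1.5}(a). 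The rest — that each resulting piece is open and closed in the rough topology, and that $GW_*^E$ is additive over it — is immediate from Lemma~\ref{lemma2.1} and \eqref{hom.decomp}.
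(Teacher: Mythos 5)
Your proposal is correct and is essentially the paper's own argument: induction on the level filtration, Lemma~\ref{Lemma1.5}(c) to reduce to finitely many new core curves at each stage, Lemma~\ref{L.cluster.exists} to place disjoint cluster balls around them, and Lemma~\ref{lemma2.1} together with \eqref{2.GWsum} for the additivity \eqref{GW(U)=sum}. The only difference is bookkeeping: the paper runs the induction upward, keeping the cluster balls already covering $U_m=U\cap\C^{J,E}_m$ and adding finitely many new clusters for the curves of $U_{m+1}$ outside their union, which is cleaner than your a priori neighborhood $V$ of $\C^{J,E}_{m-1}$ --- as written, $W_m$ may still contain infinitely many level-$m$ curves accumulating on lower-level ones, so your claim that $W_m\cap\C^{J,E}$ lies in $\C^{J,E}_{m-1}$ plus higher levels is not literally correct, but taking $V$ to be the union of the lower-level cluster balls (i.e.\ the paper's ordering) repairs this without changing the substance.
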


\begin{proof}
We will inductively construct cluster decompositions of the sets $U_{m} = U\cap \C^{J,E}_{m}$, beginning with the trivial case $U_{-1}=\emptyset$. This  induction is finite because $\C_{m}^{J,E}= \C^{J,E}$ for $m$ sufficiently large by Lemma~\ref{Lemma1.5}a.

Suppose that $\{B_i=B(C_i,\ep_i)\}$ is a cluster decomposition of $U_{m}$.  This means that  the balls $B_i$ are disjoint, that  the compact set $U_{m}$ lies in  $V=\bigsqcup B_i$, and there are no $J$-holomorphic curves on $\partial V = \bigsqcup \partial B_i$.    Lemma~\ref{Lemma1.5}   shows that $U_{m+1}\setminus V$ is a finite collection of curves $\{C_j\}$.  None of these $C_j$ lie on $\partial V$, so   we can choose radii $\ep_j>0$ such that the balls $B_j'=B(C_j, \ep_j)$ are clusters (by Lemma~\ref{L.cluster.exists})  and are disjoint from each other and from the balls $B_i$.  These clusters $B_j'$, together with the original $B_i$ are a cluster decomposition of $U_{m+1}$, completing the induction step.
\end{proof}
\begin{cor}[Cluster refinement]
\label{L.cluster.ref} 
Fix any cluster $\O=(C, J,\ep)$ with $J\in \J_{isol}^{E}$. For any  $\ep' \in (0, \ep)$ for which  $\O'=(C, J, \ep')$ is a cluster, there exists finitely many higher level clusters $\{\O_i=(C_i, J,\ep_i)\}$ such that 
\bear\label{GW(o)=GW(oo)} 
 GW^E(\O)\ =\ GW^E(\O')+ \sum_i  GW^E(\O_i).
\eear
\end{cor}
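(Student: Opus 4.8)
The plan is to peel the smaller cluster $\O'$ out of $\O$, decompose the leftover region with Proposition~\ref{P.cluster.decomp.exist}, and then check that every cluster so produced sits strictly above $C$ in the level filtration. Throughout, $J\in\Ji^E$ is fixed.

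Since $\O=(C,\ep,J)$ and $\O'=(C,\ep',J)$ are clusters, condition (c) of Definition~\ref{D.cluster} says that no curve of $\C^{J,E}$ lies at Hausdorff distance exactly $\ep$, respectively exactly $\ep'$, from $C$; equivalently $\bd B(C,\ep)\cap\C^{J,E}=\emptyset$ and $\bd B(C,\ep')\cap\C^{J,E}=\emptyset$. Put $W=B(C,\ep)\setminus\overline{B(C,\ep')}$. This is open in $\C(X)$, $C\notin W$ (since $0<\ep'$), and $\bd W\subseteq\{\,d_H(\cdot,C)=\ep\,\}\cup\{\,d_H(\cdot,C)=\ep'\,\}$, so $\bd W\cap\C^{J,E}=\emptyset$. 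Condition (c) for $\O'$ also shows that $B(C,\ep)\cap\C^{J,E}$ is the disjoint union of the relatively open-and-closed subsets $B(C,\ep')\cap\C^{J,E}$ and $W\cap\C^{J,E}$; pulling back by the continuous map $c$, the cluster $\O=c^{-1}(B(C,\ep))$ splits as $\O'\sqcup c^{-1}(W)$ into subsets open and closed in the rough topology. By Lemma~\ref{lemma2.1} and the additivity \eqref{2.GWsum}, this gives $GW_*^E(\O)=GW_*^E(\O')+GW_*^E(W)$.

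Now apply Proposition~\ref{P.cluster.decomp.exist} to the open set $W$ — legitimate because $\bd W\cap\C^{J,E}=\emptyset$ and $J\in\Ji^E$. It produces a finite $E$-cluster decomposition $\{\O_i=(C_i,J,\ep_i)\}$ of $W$ with $GW_*^E(W)=\sum_i GW_*^E(\O_i)$, and combining with the previous paragraph yields \eqref{GW(o)=GW(oo)}. Finally, each core $C_i$ lies in $W\subseteq B(C,\ep)$ and belongs to $\C^{J,E}$, so by condition (a) of Definition~\ref{D.cluster} it represents a class $k_i[C]$ with $k_i\ge 1$ and has $g(C_i)\ge g(C)$. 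Writing $[C]=d([C])\,\beta$ with $\beta$ primitive, we have $d(k_i[C])=k_i\,d([C])$, hence $\ell(C_i)=\Omega(k_i)+\Omega(d([C]))+g(C_i)\ge\Omega(k_i)+\ell(C)$. If $k_i\ge 2$ then $\Omega(k_i)\ge 1$ and $\ell(C_i)>\ell(C)$. If $k_i=1$ then $C_i$ has the same degree as $C$; were also $g(C_i)=g(C)$, condition (b) of Definition~\ref{D.cluster} would force $C_i=C$, contradicting $C_i\in W$ and $C\notin W$; so $g(C_i)>g(C)$ and again $\ell(C_i)>\ell(C)$. Thus all the $\O_i$ are higher level clusters, as required.

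I expect no genuine obstacle: the argument is essentially a bookkeeping step built on Lemma~\ref{lemma2.1} and Proposition~\ref{P.cluster.decomp.exist}. The one point demanding care is the second paragraph — verifying that the two pieces of $\O$ are honestly open and closed in the rough topology, which reduces precisely to the pair of boundary-avoidance facts $\bd B(C,\ep)\cap\C^{J,E}=\emptyset$ and $\bd B(C,\ep')\cap\C^{J,E}=\emptyset$ furnished by the cluster conditions — together with the harmless observation that condition (a) of Definition~\ref{D.cluster}, stated for curves in $B(C,\ep)$, applies to the cores $C_i$.
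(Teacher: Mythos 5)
Your proof is correct and follows essentially the same route as the paper: split $\O$ into $\O'$ and the annular region between the two balls, apply Proposition~\ref{P.cluster.decomp.exist} to that region, and check that every resulting core has strictly higher level. The only difference is that you spell out in detail the boundary-avoidance/clopen bookkeeping and the level comparison (via condition (b) ruling out $k_i=1$, $g(C_i)=g(C)$), which the paper compresses into a parenthetical remark.
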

\begin{proof} Consider $U= B(C, \ep)\setminus B(C, \ep')$. Because both $\O$, $\O'$ are clusters, (a) there are no curves in $\C^{J, E}$ on $\bd U$ and (b) all the curves in $\C^{J, E} \cap U$ have strictly higher level compared to that of $C$ (as $C$ is the only bottom level curve in both clusters). 
By Proposition~\ref{P.cluster.decomp.exist}, there exists a cluster decomposition \eqref{GW(U)=sum} of $U$, 
where  the $\O_i$ are strictly higher level. But condition (a) also implies that $U$ and $\O'$ give a decomposition of $\O$ so $GW(\O)= GW(\O')+ GW(U)$, which implies  \eqref{GW(o)=GW(oo)}. 
\end{proof}

 \vspace{5mm}
\setcounter{equation}{0}
\section{Elementary clusters and their contributions}
\label{section3}
\bigskip

The GW series can be explicitly calculated for one very special type of embedded curve.  In this section we describe how this can be done by combining  ideas already in the literature.  We first construct such ``elementary curves'' using a remarkable non-integrable almost complex structure discovered by Junho Lee \cite{L}, and then point out that the GW series for these curves has been calculated by J.~Bryan and  R.~Pandharipande \cite{b-p-GWcurves}. 
\begin{defn}\label{3.D.elem}
In a symplectic Calabi-Yau 6-manifold $X$, a cluster $\O\!=(C, \ep, J)$ is called  {\em elementary}  if
\begin{enumerate}[(a)]
\item  The core curve $C$ is {\em balanced}, meaning that  its normal bundle splits  as  $N_C=L\oplus L$ in such a way that the normal operator $D^N$,   given by  \eqref{4.defn.normal.op} below,  splits as $D'\oplus D'$. 
\item The only non-trivial $J$-holomorphic maps into $B(C, \ep)$ are multiple covers  of the embedding $C\hookrightarrow X$.
\item For each such cover $\rho$, the pullback operator $\rho^*D'$ is injective ($C$ is {\em super-rigid}).  
\end{enumerate}
\end{defn}
Property~(c) implies that the core $C$ of an elementary cluster is a regular $J$-holomorphic map, and  one can show it also implies (b) for sufficiently small $\ep>0$ by the rescaling argument of \cite{IP2}.  Property (a) allows us to actually calculate the GW contribution.

 When  $C$ is a rational curve, the unit disk bundle in $\O(-1)\oplus \O(-1)$ is an elementary cluster.  The following proposition uses non-integrable almost complex structures to construct similar examples  for any curve. The proof begins with the choice of a  spin structure on $C$, i.e. a holomorphic line bundle $L\to C$ together with a (holomorphic) identification of $L^2$  with the canonical bundle $K_C$ of $C$.

\begin{prop}
\label{3.Junhocurves} 
For every smooth complex curve $C$, there exists an elementary cluster whose  core is $C$.  
\end{prop}

\begin{proof}
Fix a curve $C$ of genus $g$,  a spin structure $L$, and a K\"{a}hler structure $(J, g, \w)$ on the total space $Y$ of $L\oplus L\to C$ compatible with its holomorphic structure. The canonical bundle  $K_Y=\pi^*(L^{-2}\otimes K_C)$ of $Y$ is then trivial, so $Y$ is a Calabi-Yau 6-manifold.  We will perturb $J$ to obtain an almost complex structure on the unit disk bundle $U\subset Y$ that makes  $U$ an elementary cluster.

First consider the total space $Z$ of $p:L\to C$.  Its canonical  bundle $K_Z=T^{2,0}Z$ is the pullback $p^*(L^{-1}\otimes K_C)= p^*(L)$, which has a canonical section $\beta$ that vanishes transversally along the zero section.  Pullback  $\beta$, regarded as a 2-form, by the bundle projections $p_1, p_2:Y\to Z$ onto the first and second copy of $L$ and set $\alpha=p_1^*\beta+ p_2^*\beta$.  Then 
$\alpha$ is a closed $(2,0)$-form on $Y$ that vanishes to first order along the zero section of $L\oplus L$, which is the core curve $C$ of the disk bundle $U\subset Y$.  Following Junho Lee,  define a bundle map $K_\alpha:TY\to TY$ by $g(u,K_\alpha v)=(\alpha+\overline{\alpha})(u,v)$ for all $u,v\in TY$ and set
$$
J_\alpha = (Id +JK_\alpha)^{-1}J (Id +JK_\alpha).
$$
 Then $J_\alpha$ is  an $\w$-tame almost complex structure  on $U$  after replacing  $\alpha$ by $t\alpha$ for small $t>0$ (cf. Section~2 of \cite{L}).   In this context, Lee proved that {\em the image of every non-trivial $J_\alpha$-holomorphic map into $D$ is everywhere tangent to $\ker K_\alpha$} (cf. \cite[(2.4)]{L}). It is straightforward to check that at each point $p\in Y$ not on the zero section,  $\ker K_\alpha$ is vertical. Consequently,  any  map  whose image is tangent to $\ker K_\alpha$ lies in a fiber of $Y\to C$ or in the zero section.  We conclude that every  $J_\alpha$-holomorphic map into $U$ that represents $k[C]$ for some $k\not= 0$ is a map into the core curve $C$ of the disk bundle $U$.  
 
  Along the zero section of $Y$ and for $v\in TC$, $\nabla_v K_\alpha$ decomposes under the splitting $TC\oplus L \oplus L$   as  $0 \oplus \nabla_v K_\beta \oplus  \nabla_v K_\beta$.   Correspondingly, the  normal operator $D^N$, given by  \eqref{4.defn.normal.op},  splits as  $D^N_\beta\oplus D^N_\beta$ as can be seen from \cite[(8.4)]{LP1}.  As in  Section~4 of \cite{LP1}, the normal projection of $D_\beta$ is the sum $\del +R_\beta$ of the  $\del$-operator on $L$ and a bundle map $R_\beta: L \to T^{0,1}C\otimes L$ that satisfies $JR_\beta=-R_\beta J$.  The injectivity condition (c) of Definition~\ref{3.D.elem} is then exactly the statement of Proposition~8.6 of \cite{LP1}.
\end{proof}

\medskip

 By Lemma~\ref{lemma2.1} an elementary cluster has a well-defined  $\Lambda$-valued series $GW(\O)$, 
 independent of $\ep$. As usual,  there is an associated {\em disconnected} invariant
 $$
 Z(\O)\ =\ \exp(GW(\O))
 $$
obtained by exponentiating in the Novikov ring. It turns out that $Z(\O)$ is the more easily calculated.
    
\begin{prop} 
\label{Prop3.3}
The disconnected GW invariant of an elementary cluster $\O$  whose core $C$ has genus $g$ is given by 
 \bear
\label{BPelemContribution}
Z(\O) \  =\  1+\sum_{d\ge 1}  \sum_{\mu\vdash d}\; \prod_{\square\in \mu}  
\l(2\sin \frac{h(\square)t} 2\r)^{2g-2} q^{dC},
\eear
where the second sum is over all partitions $\mu$ of $d$, the product is over the boxes in the Ferrers diagram of  $\mu$, and $h(\square)$ is the hooklength 
of $\square\in \mu$.  
\end{prop} 

\begin{proof} 
 Because   the linearization  on covers of the core curve $C$ is injective, the contribution to the GW invariant of its multiple covers can be calculated using the Euler class of Taubes obstruction bundle (this is the 6-dimensional version of the setup in  \cite{LP2}, and a special case of Theorem 1.2 in \cite{z}).  

Consider the moduli space $\ov\M_{d, \chi}^\circ(C)$ of degree $d$ holomorphic maps $\rho$ to $C$ whose domain is possibly disconnected and has Euler characteristic $\chi$,  and where $\rho$ is non-trivial on each connected component.  This carries a virtual fundamental cycle $[\ov\M_{d,\chi}^\circ (C)]^{vir}$ of {\em even} complex dimension $b= d(2-2g)-\chi$.  The operators $D^N$ and $D'$ of Definition~\ref{3.D.elem} induce  families of real operators ${\cal D}^N$ and ${\cal D}'$ over $\oM^\circ_{d, \chi}(C)$ whose fibers at $\rho$ are the pullback operators $\rho^*D^N$ and $\rho^*D'$.   By Definition~\ref{3.D.elem}a, the corresponding  index bundles satisfy $\mathrm{Ind}\; {\cal D}^N = \mathrm{Ind}\; {\cal D}'\oplus \mathrm{Ind}\; {\cal D}'$.   {\em A priori}, these are real virtual bundles, but Definition~\ref{3.D.elem}c insures that  the Taubes obstruction bundle  $Ob=-\mathrm{Ind}_\R\; {\cal D}^N$  is an actual vector bundle of rank $b$,  equal to the direct sum of two copies of  $Ob'=-\mathrm{Ind}_\R\; {\cal D}'$. The bundles  $Ob$ and $Ob'$ each come with a canonical orientation determined, on each connected component of the space of covers, by the spectral flow  to an injective complex operator over one fixed cover $\rho$. Computing this spectral  flow   along  a path of operators that respect the direct sum decomposition, one sees that $Ob=Ob'\oplus Ob'$ as {\em oriented} real bundles. 

With this notation, the elementary contribution is equal to the integral  of the Euler class 
\best
Z_{d, \chi}(\O)= \int_{[\ov\M_{d,\chi}^\circ (C)]^{vir}} e(Ob), 
\eest
\vspace{-2mm}   
where 
\best
e(Ob)&=&e(Ob'\oplus Ob')= e(Ob')\cup e(Ob')=   (-1)^{b/2} c_{b}(Ob'\otimes_\R\cx).
\eest
This Chern class  factors through $K$-theory (in general Euler classes do not).  Because 
$D'=\del_L+R$ is a 0'th order deformation of the complex operator $\del_L$,   the complexification of the index bundles of $D'$ and $\del_L$ are equal in $K$-theory, so
\best
c_{b}(Ob'\otimes_\R\cx)\ =\ c_{b}(-\mathrm{Ind}\; \del_L\otimes_\R \cx)\  =\ 
 c_b(- \mathrm{Ind\;} \del_{L}\oplus (-\mathrm{Ind\;} \del_{L})^*).
\eest
Combining the last three displayed equations gives
\bear\label{eq.Z.red}
Z_{d, \chi}(\O)&=&  (-1)^{b/2}  \int_{ [\ov\M_{d,\chi}^\circ (C)]^{vir}} c_b(-\mathrm{Ind\;} 
\del_{L}\oplus  (-\mathrm{Ind\;} \del_{L})^*).
\eear

\medskip

 The right-hand side of \eqref{eq.Z.red}  can be evaluated using equivariant  techniques. The   torus
 $T=\cx^*\times \cx^*$ acts on the total space $Y$ of the  holomorphic bundle $N_C=L\oplus L$.  With the antidiagonal  $\cx^*$-action, $Y$ is an equivariant local Calabi-Yau 3-fold. Bryan and Pandharipande defined a `residue'  generating function $Z^{T}(Y)$  whose  coefficients 
$$
Z_{d, \chi}^{T}(Y)\ =\  \int_{ [\ov\M_{d,\chi}^\circ(C)]^{vir}} c_b(-\mathrm{Ind\;} 
\del_{L\oplus L})
$$
are {\em equivariant} integrals (defined by localization). They proceeded to  express them in terms of  {\em ordinary} integrals:
\bear\label{Z.red.equiv}
Z_{d, \chi}^{T}(Y)\ =\ \sum_{b_1+b_2=b} \int_{ [\ov\M_{d,\chi}^\circ(C)]^{vir}}(t_1/t_2)^{(b_2-b_1)/2}c_{b_1}(-\mathrm{Ind\;} \del_{L})c_{b_2}(-\mathrm{Ind\;} \del_{L})
\eear
where  $t_1, t_2$ are the weights of the action (cf. page 105 of \cite{b-p-GWcurves}).  For the antidiagonal action $t_1=-t_2$,  \eqref{Z.red.equiv} reduces to   \eqref {eq.Z.red} after noting that  $c_k(E^*)=(-1)^k c_k(E)$ for $E=-\mathrm{Ind\;} \del_{L}$.  On the other hand,  for the antidiagonal action,  Bryan and Pandharipande also explicitly calculated \eqref{Z.red.equiv}  to be the coefficient of the series appearing on the right hand side of \eqref{BPelemContribution} (Corollary~7.3 of  \cite{b-p-GWcurves}). This completes the proof.
\end{proof}

The series \eqref{BPelemContribution} is a universal power series that depends only on the genus $g$ of $C$.  Thus we set
\bear
\label{3.defGelem}
GW^{elem}_g(q,t)\ =\ \log Z^{elem}_g(q,t),
\eear
where 
\bear
\label{DefZq}
Z^{elem}_g(q,t)\  =\  1+\sum_{d\ge 1}  \sum_{\mu\vdash d}\; \prod_{\square\in \mu}  
\l(2\sin \frac{h(\square)t} 2\r)^{2g-2} q^{d}.
\eear
 In fact, taking log of \eqref{DefZq} and separating the $d=1$ term of the series, 
\bear
\label{logZ}
GW^{elem}_g(q,t)\ =\ q \l( 2\sin \frac{t}2\r)^{2g-2} +\ \sum_{d\ge 2}\sum_{h\ge g}
GW_{d, h}(g)\ q^{d} t^{2h-2}, 
\eear
for some coefficients $GW_{d, h}(g)\in \Q$. 
Since the  coefficient of the leading term  $q t^{2g-2}$ is $+1$,  the core curve $C$ of any elementary cluster has  $\sign C>0$.  

Now apply the ``BPS transform'', which takes an arbitrary element of the Novikov ring to another by
$$
\sum_{A, g} N_{A,g} \, t^{2g-2} q^A\ =\ \sum_{A, g}\ n_{A, g} \ma\sum_{k=1}^\infty \frac 1k\l(2\sin \frac{kt}2\r)^{2g-2}  q^{kA}.
$$
This transform is well-defined and invertible (Proposition 2.1 of \cite{b-p-BPS}).  Thus for   an elementary cluster $\O$ whose core $C$ has genus $g$
we can write
\bear
\label{3.3}
GW(\O)\ =\ GW^{elem}_g(q^C, t) \ =\ \ma\sum_{d\ne 0 }\sum_h \ n_{d, h}(g) 
\ma\sum_{k=1}^\infty \frac 1k\l(2\sin \frac{kt}2\r)^{2h-2}  q^{kdC}
\eear
for uniquely determined  coefficients  $n_{d, h} (g)$ that are, {\em a priori}, rational numbers. These coefficients  have been explicitly calculated for low degree ($d\le 2$) and for low genus ($g\le 1$). A combinatorics argument now handles the case $g\ge 2$, yielding a basic fact:
\begin{prop} 
\label{theorem3.4}
The local Gopakumar-Vafa conjecture is true for elementary clusters.    More specifically,  the coefficients of the series (\ref{3.3}) associated with a 
 genus $g$ elementary cluster $\O$ satisfy 
\begin{enumerate} [(a)]
\item (Integrality) $n_{d, h} (g)\in \Z$. 
\item (Finiteness) for each $d$ fixed, $n_{d, h}(g) =0$  for $h<g$ or $h$ large. 
\item For $g=0$, all $n_{d,h}(g)$ vanish except $n_{1, 0}(g)=1$.
\item For $g=1$, all $n_{d,h}(g)$ vanish except $ n_{d, 1} (g)=1$ for each $d\ge 1$.
\end{enumerate} 
\end{prop} 

\begin{proof} 
When the core curve has  genus zero and normal bundle  $O(-1)\oplus O(-1)$, $\O$  is an elementary cluster and its contribution to the $GW$ invariant was first  calculated by  Faber and Pandharipande.   Specifically, letting  
$c(h,d)$ denote the coefficient of $q^{d} t^{2h-2}$ in (\ref{logZ}) with $g=0$,   equations (34), (35) and the middle displayed equation on page 192 of  \cite{f-p} imply the formulas
$$
c(h, d) = d^{2h-3} \, c(h,1)
\hspace{2cm}
\sum  c(h,1)\, t^{2h-2} \ =\   \left(2 \sin(\tfrac{t}{2})\right)^{-2}
$$ 
(these are equations (1) and (2) in \cite{p}). Consequently,  the genus 0 elementary GW series is
\best
GW^{elem}_0(q, t) \ =\ \sum_{h,k}c(h,k)\ t^{2h-2}\, q^k\ =\ \sum_k \frac 1k\l(2\sin \frac{kt}2\r)^{-2}  q^{k}.
\eest
Comparing with (\ref{3.3})  gives (c).

 For genus $g=1$, (\ref{DefZq}) reduces to the generating function for the number $p(d)$  of  unordered  partitions  of $d$:
$$
Z_1^{elem}(q, t) \ =\ 1+\sum_{d\ge 1} \ p(d)\ q^d\ =\ 
\prod_{d=1}^\infty \left(\frac{1}{1-q^d}\right). 
$$
Hence
$$
GW_1^{elem}(q, t) \ =\  \log Z^{elem}_1(q, t) \ =\ -\sum_{d=1}^\infty \ \log(1-q^d)\ =\ \sum_{d=1}^\infty  \sum_{k=1}^\infty \ \frac{\, q^{kd}}{k}.
$$
Comparing with (\ref{3.3})  gives (d). 

In the higher genus case, both (a) and (b) are consequences of an algebraic fact about power series with integral coefficients  that follows by combining several results in the paper \cite{p-t}  of Pandharipande and Thomas. Making the change of variable $Q= e^{it}$,  
 \eqref{3.3} becomes
 \best
 \log Z_g^{elem}  \ =\   \sum_{d\ge 1}\sum_{ h} n_{d, h}(g)\ \sum_{k\ge 1} \frac { (-1)^{h-1}}k 
 (Q^{k} + Q^{-k}-2)^{h-1} q^{kd}. \nonumber  
 \eest
On the other hand, for $g\ge 1$,  \eqref{DefZq}  becomes 
 \bear
\label{ZlocAnd}
Z_g^{elem}\ =\  1+\sum_{d\ge 1}  \sum_{\mu\vdash d}  \prod_{\square\in \mu}(-1)^{g-1} \l(Q^{h(\square)}+Q^{-h(\square)}-2\r)^{g-1} q^d 
\ =\  \sum _{d=0}^\infty \sum_n A_{n, d}\  Q^n q^d
\eear
where, for each $d$, the inner sum is a Laurent polynomial in $Q$ with integer coefficients $A_{n, d}$. 
These coefficients $A_{n, d}$  uniquely determine the numbers $n_{d, h}$.  But by Theorem~3.20 of 
 \cite{p-t},  the integrality of the $A_{n, d}$ implies that all of the $n_{d, h}$ are also integers. Thus statement (a) holds.

For $g\ge 2$, the coefficient of $q^d$ in  \eqref{DefZq} is a Taylor series in $t^2$,  $Z=1+ t^{2g-2}q+ O(t^{2g})$, so $\log Z= t^{2g-2} q+ O(t^{2g})$.  Comparing with  (\ref{3.3}) one sees that $n_{d,h}(g)=0$ for all $h<g$, as  in (b).  

Finally, for genus $g\ge 2$, the inner sum  in \eqref{ZlocAnd}  is  a Laurent polynomial in $Q$, symmetric in $Q\ra Q^{-1}$, and with degree bounded by $(g-1)\sum h(\square)\le d^2(g-1)$ (since for a partition of $d$, the hooklength $h(\square)$ of each box is at most $d$). This property is preserved under taking the log: 
$$
\log  Z_g^{elem}  \ =\  \log \sum_{d}\sum _n a_{d, n} Q^{n} q^d  \ = \   \sum_{d\ge 1} \sum_ {h\ge g } n_{d, h}(g)\ \sum_{k\ge 1} \frac { (-1)^{h-1}}k 
 (Q^{k} + Q^{-k}-2)^{h-1} q^{kd},
$$
where $|n|\le (g-1) d^2$. As in the proof of Lemma 3.12 of \cite{p-t}, this implies the vanishing of $n_{d, h}(g)$ for large $h$. In fact, a proof by induction on $d$ using the above bound implies that $n_{d, h}(g)= 0$ for $h-1> d^2(g-1)$.
\end{proof}

\vspace{5mm}

\setcounter{equation}{0}
\section{Analytic preliminaries}
\label{section4}
\bigskip

This section is a review of the analytic setup for the  moduli space for a general closed symplectic manifold $(X,\w)$.
Consider the universal moduli space  of simple maps  (with smooth, connected domains and smooth $J$) 
\bear\label{4.M.simple} 
\begin{CD}
\M_{simple}\\
@VV{\pi}V\\
\hspace{8mm} \J = \J^\infty
\end{CD}
\eear
with the projection $\pi([f],J)=J$. Note that $\M_{simple}$ is an open subset of the universal moduli space $\oM(X)$ of  \eqref{0.1}, and simple maps have trivial automorphism  groups (cf.   \cite{ms}, Proposition 2.5.1).  
It contains  the open subset  $\M_{emb}$  of maps that are embeddings.  

To set up the analysis, we first work locally around a  pair $p=(f, J)$ that represents a point in  the moduli space \eqref{4.M.simple}.   Thus $p$ consists of a simple $J$-holomorphic map $f:C\to X$ whose domain is a smooth, connected marked complex curve $C=(\Sigma, x_1, \dots, x_n, j)$,   $J$ is a smooth almost complex structure,  and $j$ is in the space $\J(\Sigma)$ of complex structures on $\Sigma$.

 \subsection{Slice and linearization.}  
 
 The moduli space $\M_{g,n}(X)$ is  naturally a subset of the quotient of  $Map(\Si, X)\times \J(\Si)\times\J$ by the action of the diffeomorphism group $\mathrm{Diff}(\Si, {\bf x})$ of $\Si$ that preserve each point in the set  ${\bf x}=\{ x_1, \dots, x_n\}$ of  marked points. In practice, one   chooses a local  slice for the diffeomorphism action  and regards the moduli space locally as a subset of the slice.  For now, we assume that  the domain $C_0$ has no automorphisms; this assumption will be removed at the end of this subsection.    To define a slice, choose local holomorphic coordinates on a ball $B\subset \M_{g,n}$ centered at $[C_0]\in \M_{g,n}$.  Then there is a  local universal deformation  $\gamma:{\mathcal U}_B\to B$  of $C_0$ with sections $x_1,\dots, x_n$.  This means, in particular, that the  central  fiber $\gamma^{-1}(0)$ is identified with $C_0$ as a marked complex curve, and every small deformation $C$ of $C_0$ is equivalent under $\mathrm{Diff}(\Si, {\bf x})$ to one and only one fiber  $C_b$, $b\in B$,  of $\gamma$.  Fix a smooth trivialization  $\tau$ of 
 ${\mathcal U}_B\ra B$ in which the universal deformation is $B\times  (\Si, {\bf x}) \to B$, 
\bear\label{4.tau}
\xymatrix@C=.5pc @R=.5pc{
{\mathcal U}_B \ar[rr]^{\tau\quad}\ar[dr]_\gamma&& B\ti (\Si, {\bf x})\ar[dl]^{pr_1}
\\
&B }
\eear 
 This trivialization, regarded as a family of complex structures $j_b$ on $(\Sigma, {\bf x})$, defines an embedding 
 \bear\label{B.to.slice}
 \si: B\to \J(\Sigma), \quad \mbox{given by } b\mapsto j_b,  
 \eear
 whose image  ${\mathcal S}_{\tau}$ is a local slice for the action of the diffeomorphism group on $\J(\Sigma)$.    The  linearization of this embedding 
  at $C=C_b$ gives isomorphisms
\bear\label{nat.iso}
 T_{C}\M_{g,n} \cong T_b B \ \overset{\cong} \longrightarrow \ T_{j_b}{\mathcal S}_\tau.
\eear 
Furthermore, the tangent space to the orbit ${\mathcal O}_j$  of  $\mathrm{Diff}(\Si, {\bf x})$ on $\J(\Sigma)$  is the image of 
$$
\del_{TC}:\Omega_{\bf x}^0(TC)\to \Omega^{0,1}(TC), 
$$
 where $\Omega_{\bf x}^0(TC)$ denotes the space of smooth sections of $TC$ that vanish at the marked points. 
 
  The slice ${\mathcal S}_{\tau}$ is transverse to this orbit at $j$, giving  an  isomorphism
\bear
\label{TCisomH01}
  T_j{\mathcal S}_{\tau} \ \cong \  T_j\J(\Sigma)/{\mathcal O}_j \ =\ \Omega^{0,1}(TC) /{\mbox{im $\del_{TC}$}}\ =\  H^{0,1}(TC),
\eear
where the last equality defines the vector space $H^{0,1}(TC)$.   Consequently, the map
\bear
\label{1.DC}
D_C: \Omega_{\bf x}^0(TC)\oplus   T_j{\mathcal S}_\tau \ \to \ \Omega^{0,1}(TC)
\eear
defined by $D_C(\zeta, k) =\del_{TC}\zeta +j k$ is a complex-linear isomorphism.

\medskip

Given local trivializations $\tau_1$ and $\tau_2$ as in \eqref{4.tau}  over two overlapping charts $B_1, B_2$ in $\M_{g,n}$ containing $[C_0]$, after restricting them to the overlap $B_{12}=B_1\cap B_2$, they determine a smooth transition function 
\bear\label{4.transition}
\phi=\tau_2\circ \tau_1^{-1}: B_{12} \ti \Si \ra B_{12}\ti \Si.
\eear 
The restriction $\phi_b$ to the fiber over $b\in B_{12}$ is a diffeomorphism of $\Si$ preserving the marked points ${\bf x}$. The corresponding maps \eqref{B.to.slice} restrict to embeddings $\si_1, \si_2: B_{12}\to \J(\Sigma)$,  and  
\bear
\label{4.1BS2}
\si_{12}=\si_2\circ \si_1^{-1}:  {\mathcal S}_1\to {\mathcal S}_2
\eear
 is  a diffeomorphism between  ${\mathcal S}_1= \si_1(B_{12})$ and  ${\mathcal S}_2=\si_2(B_{12})$ with 
 \best
 \si_{12}(j_b)= (\phi_b)^*(j_b) \quad \mbox{ for all }b\in B_{12}. 
 \eest
\smallskip

To include maps,  fix  $(f_0, J_0)$,   where $f_0$ is  a   $J_0$-holomorphic map  whose domain $C_0$ has $\Aut (C_0)=1$.   Then 
\bear\label{D.slice.S0}
{\mathcal Slice}_\tau\ = \  \l[Map(\Sigma, X)\times   {\mathcal S}_{\tau} \r] \times\J
\eear
is  a local slice for the action of  $\mathrm{Diff}(\Si, {\bf x} )$ on $Map(\Sigma, X)\times \J(\Si) \times\J$.    Elements of \eqref{D.slice.S0} have the form   $(f, j, J)$ where $f:\Si\ra X$ and $j\in {\mathcal S}_{\tau}$, making  $C=(\Si, j, x_1, \dots, x_n)$   a marked curve with complex structure $j$.  For notational simplicity, we will frequently combine the first two factors, writing elements  of \eqref{D.slice.S0} as pairs $(f, J)$, where  
the letter $f$ denotes a map $f:C\ra X$ and therefore implicitly includes its domain $C$, regarded as a marked complex curve. 

The slice \eqref{D.slice.S0} comes with a projection
$$
\pi: {\mathcal Slice}_\tau \to \J
$$
defined by $\pi(f, J)=J$,  and a complex vector bundle $\F\to {\mathcal Slice}_\tau$ whose fiber over $p=(f, J)$ is  $\Omega^{0,1}(f^*TX)$. Near $(f_0, J_0)$,  the moduli space, considered as a subset of ${\mathcal Slice}_\tau$,  is the zero set of the section  $\Phi$ of $\F$ defined by 
\bear
\label{4.defF}
\Phi(f, J)=\del_{J}f.
\eear

Under the isomorphism \eqref{nat.iso}, the tangent  bundle to the slice can be written as
\bear
\label{4.ETJ}
T{\mathcal Slice}_\tau\ =\ \E \oplus T\J,
\eear
where $\E\ra {\mathcal Slice}_\tau$ is the complex bundle whose fiber at $p=(f, J)$ is   $\E_p=  \Omega^{0}(f^*TX)\oplus  T_j{\mathcal S}_\tau$.

 The linearization of the $J$-holomorphic map equation \eqref{4.defF}  at a solution  $p$ on the slice is the  real operator  $\L_p:\E_p  \oplus T_J\J\to \F_p$  given by 
\bear
\label{4.formulabigL}
{\cal L}_p(\xi,  K) =   D_p\xi + \tfrac12 K\circ df \circ j,
\eear
where   $D_p:\E_p\to\F_p$  is the  linearization under variations that fix $J$.  Explicitly,   $D_p$  applied to   
$\xi=(\zeta, k)\in  \Omega^{0}(f^*TX)\oplus  T_j{\mathcal S}_\tau$  is    
\bear
\label{1.formulaL}
 D_p(\xi)(w) \ =\ \tfrac12\big[ \nabla_w \zeta+J\nabla_{jw}\zeta+
(\nabla_\zeta J)(df(jw))+ J df (k(w)) \big], 
\eear
where $\nabla$ is any  torsion-free connection on $TX$;  at a solution $p$,  $D_p$  is independent of the connection   (Lemma~2.1.2  \cite{IS1} or  Lemma~1.2.1 of \cite{IS2}).   Both $\L_p$ and  $D_p$ depend on the  $J$ only through the 1-jet  of $J$ along the image of $f$,  and the spaces $\E_p$ and $\F_p$ depend only on the 0-jet  of $J$ along the image of $f$. Furthermore, when  the variation $\zeta=f_*\zeta^T$ is tangent to  $C$,  \eqref{1.formulaL} reduces to   
 \bear\label{L.tg.restr}
D_p(f_*\zeta^T, k) = f_*D_C(\zeta^T, k),
 \eear 
 where $D_C$ is the  isomorphism \eqref{1.DC}.  
Consequently, when $f:C \ra X$ is a $J$-holomorphic immersion with normal bundle $N_C=f^*TX/TC$, the linearization \eqref{1.formulaL} uniquely descends to a normal operator 
\bear\label{4.defn.normal.op}
D_p^N: \Gamma(N_C) \ra \Omega^{0,1}(N_C). 
\eear

 Now consider a {\em simple} map  $f_0: C_0 \ra X$ whose automorphism group
 $\Aut(C_0)$ is non-trivial. As noted after \eqref{1.Msimple}, the injective points of $f_0$ are dense on each component.  Hence we can choose 
$\ell$ additional injective points  on $C_0$ so that  the marked curve  $\wt C_0= (C_0, x_1, \dots, x_{n+\ell}$) has $\Aut(\wt C_0)=1$.  We  can then fix a  trivialized  local universal deformation of the  $(n+\ell)$-marked curve $\wt C_0$ over  a ball  $\wt B \subset \M_{g, n+\ell}$. 
 Set  $\wt \Sigma = (\Sigma, x_1, \dots, x_{n+\ell})$, and for each image point $y_i=f_0(x_i)$, $n<i\le n+\ell$,  choose a   codimension~2 ball $V_i$ through $y_i$  transverse to $f_0(C_0)$. Standard results (cf. Section 3.4 of \cite{ms}) show that the space $Map_\ell(\wt \Si, X)$ of maps satisfying $f(x_i)\in V_i$ for $n<i\le n+\ell$ is locally a manifold near  $f_0$, and hence
$$
{\mathcal Slice}_\tau \ = \ Map_\ell(\wt \Si, X)\times \wt B\times\J
$$
is a  local slice for the action of $\mathrm{Diff}(\wt \Si)$ on $Map_\ell(\wt \Si, X)\ti\J(\wt\Si)\ti \J$.  Thus defined, each point in the slice is a pair $p=(f, J)$ where $f:\wt C\to X$ is a  map whose domain has no non-trivial automorphisms. The linearization 
\bear
\label{4.DEF}
D_p:\E_p\to\F_p
\eear
 is still given by \eqref{1.formulaL}, but where  $\E$ is now the bundle over the slice whose  fiber over $p=(f, J)$ is  
\bear
\label{4.EpCtilde}
\E_p\ =\ \left\{ \zeta \in\Omega^{0}(f^*TX)\, \Big|\, \zeta(x_i)\in TV_i \mbox{ for all } n+1\le i\le n+\ell \right\}\oplus T_{\wt C}\M_{g,n+\ell}. 
\eear

For notational simplicity, we will henceforth write $\wt C$ as $C$, and always restrict to sections $\zeta$ with  $\zeta(x_i)\in TV_i$.  All the variations we construct in this  and subsequent sections will be supported away from all  marked points of $\wt C$.

\subsection{Sobolev completions.} 
\label{subsection4.2} 

 Throughout this paper, we work with the following set of Banach space completions (cf. \cite[Section 3.1]{ms}).   For numbers
\bear\label{4.2.rlcondition}
l \ge 6,\quad  r > 2, \qquad 1\le m \le l,
 \eear
 let $\J^l$ denote the space of tame $C^l$ almost complex structures on $X$,  let 
${Map}^{m,r}(\Si, X)$ be the completion of the space of smooth maps $\Sigma\to X$ in the Sobolev $(m, r)$ norm (i.e. the $m$-jet is in $L^r$), and let 
\bear\label{D.slice.S}
{\mathcal Slice}_\tau^{m,r, l}=\l[\vphantom{K} Map^{m, r}(\Sigma, X)\times   {\mathcal S}_{\tau} \r] \times\J^l.
\eear
These are smooth separable Banach manifolds.

Similarly, for each $m$ in the range \eqref{4.2.rlcondition},  the vector bundles $\E$ and $\F$ extend to  vector bundles  
$\E^{m, r}$ and $\F^{m-1, r}$ over the slice  \eqref{D.slice.S}, whose fibers at $p=(f, J)$ are, respectively, 
\bear\label{def.ef.comp}
\E^{m, r}_p= W^{m, r}(f^*TX)\oplus T_j\mathcal S_{\tau}  \quad \mbox{and}\qquad 
\F_p^{m-1, r}=W^{m-1, r}(\Lambda_C^{0,1}\otimes_\cx f^*TX), 
\eear
where  $W^{m,r}(E)$ denotes the space of  Sobolev $(m,r)$ sections of a vector bundle $E$, and where $\Lambda^{0,1}_C$ is the bundle $(T_\cx^*C)^{0,1}$  over the domain $C$.  
The bundle $\E^{m,r}$ is smooth (it is the  tangent bundle of ${Map}^{m,r}(\Si, X)\times S_\tau$)  and $\F^{m-1,r}$ is of class $C^{l-m}$ (cf. \cite{ms}, page 50).

 In this context,  \eqref{4.defF} defines a $C^{l-m}$ section of $\F^{m-1, r}$ over the slice \eqref{D.slice.S} whose zero locus is a local model of the moduli space. We will focus on the subset 
\bear\label{4.2Msimpletau}
M_{simple} \subset {\mathcal Slice}_\tau^{m,r, l}
\eear
 of  pairs $p=(f, J)$ where $f$ is a {\em simple} $J$-holomorphic map.  By  elliptic regularity, all such maps $f$ are of class $W^{l+1,r}$ \cite[Proposition 3.1.10]{ms}, and hence  the set $M_{simple}$ 
 is independent of  $m$ in the range \eqref{4.2.rlcondition}, and its elements are pairs $(f, J)$ where both $f$ and $J$ are of class $C^l$. 

\medskip

For two local trivializations $\tau_1, \tau_2$ as in \eqref{4.tau} over the same $B=B_{12}$, there is a transition map $\phi$ as in \eqref{4.transition}. This induces a map $\wh \phi$ between two slices \eqref{D.slice.S0}  given by 
\bear\label{transition.moduli}
\wh \phi(f,j_b,J)\ =\ (f\circ\phi_b,\, (\phi_b)^*(j_b),\, J)\ =\ (f\circ\phi_b,\, \si_{12}(j_b),\, J),
\eear
for the map $\phi_b$ defined after \eqref{4.transition} and $\si_{12}$ as in \eqref{4.1BS2}.   In this formula,  $\sigma_{12}$ is smooth, and $\{\phi_b\,|\, b\in B\}$  is   a smooth family of diffeomorphisms of the closed surface $\Sigma$.  As a result, the regularity of $\widehat{\phi}$ is determined by the regularity of the map $T$ defined by $T(f, \phi)=f\circ \phi$. Formally computing its differential, one finds that 
\best
dT_{f, \phi} (\xi, v)= \phi^* \xi + df(v), \qquad \mbox{ for all } \xi \in \Gamma(f^* TX) \mbox{ and } v\in \Gamma(TC).  
\eest
More generally, one finds that  the $k$'th derivative of $T$ depends on the  $k$-jet of $f$.  It follows that 
  \eqref{transition.moduli} induces a $k$-times differentiable, hence $C^{k-1}$, map 
\bear\label{4.slice-to-slice}
\wh \phi:{\mathcal S}lice_{\tau_1}^{m, r,l} \ra {\mathcal S}lice_{\tau_2}^{m-k, r,l}.
\eear
Thus a change of trivializations induces a map of slices which   loses regularity.  
We will return to this technical issue in Proposition~\ref{5.1new}.

\subsection{Extensions and adjoints.} 
\label{subsection4.3} 

The linearizations \eqref{1.formulaL}   extend (non-canonically) to a family of operators parameterized by   points $p=(f,J)$  in the slice  \eqref{D.slice.S} as follows. Fix a  Riemannian metric $g_0$ on $X$ and let $\nabla^0$ denote its Levi-Civita connection. Using the notation of   \eqref{1.formulaL}, define $D_p$  by  
\bear\label{D.formula.lin}
D_{p}(\zeta, k) = D_p^0\zeta + 
\tfrac 14 (J df+ df j) k.
\eear
where 
\bear\label{D0.formula.lin}
(D^0_{p}\zeta)(w) =\tfrac 12 \l( \nabla^0_w\zeta + J\nabla^0_{jw}\zeta \r)  +  
\tfrac 14(\nabla^0_\zeta J) (df j+J df)(w).
\eear
Then    $D^0_p$ agrees with \cite[(3.1.4)]{ms}, and \eqref{D.formula.lin} agrees with \eqref{1.formulaL}   if $f$ is $J$-holomorphic  because \eqref{1.formulaL}  is independent of the connection and $J df=df j$. 
Similarly extend    \eqref{4.formulabigL} by the formula
$$
\L_p(\zeta, k) = D_p\xi+\tfrac14  K (df j+J df).
$$
 As in Section~3.1 of \cite{ms},  $D_p$ and $\L_p$ extend to bounded linear operators 
\bear\label{def.dl.comp}
D_p:\E_p^{m, r}\ra \F_p^{m-1, r}  \quad \mbox{ and } \quad  {\mathcal L}_p: \E_p^{m, r}\oplus T_J\J^l \ra  \F_p^{m-1, r},
\eear
and $D_p$ is a compact perturbation of   $D_p^0$,  and hence is Fredholm.  Moreover, if $p=(f, J)$ is a $J$-holomorphic pair,  then  $\ker D_p$ and $\cok D_p$ are independent of $m$ in the range \eqref{4.2.rlcondition}.
\medskip

Next fix a Riemannian metric $g_S$ compatible with the complex structure on the local universal family of curves parameterized by $\mathcal S_\tau$.  By restriction,  $g_S$  induces a Riemannian metric on each  curve in the local family which,  under the trivialization associated with the slice,  gives rise to a family of metrics on $\Sigma$ parameterized by the $\mathcal S_\tau$. 
These metrics, together with their associated   volume forms and the  fixed metric $g_0$  on $X$
determine $L^2$ inner products $\langle\ ,\ \rangle_{L^2}$ on   $T_j \mathcal S_{\tau}$,  $\E_p$ and $\F_p$ for each $p=(f, J)\in {\mathcal Slice}_\tau$.

 Let  $D^*_p$ denote the formal $L^2$ adjoint of the operator $D_p$ of 
\eqref{D.formula.lin}, which is uniquely defined by 
\bear
\label{4.2.D*def}
\langle D_p\xi, \eta \rangle_{L^2}\ =\  \langle \xi, D^*_p \eta\rangle_{L^2} 
\eear
for all $\xi\in\Gamma(f^*TX)\oplus  T_j\mathcal S_\tau$ and  $\eta\in\Omega^{0,1}(f^*TX)$. 
The adjoint operator depends on the choice of metrics.

\medskip

  Assume $p=(f,J)$ is in a slice \eqref{D.slice.S},  where $f:C\ra X$ is simple and $J$-holomorphic , and $C$ is a smooth connected complex curve. 
 For an element $\xi=(\zeta, k)$ of $\E_p^{m, r}$  and an injective point $x$ of $f$,  we define $\xi^N(x)$ to be the  component $\zeta^N(x)$ of $\zeta(x)$ normal to $f_*(T_xC)$ with respect to  the metric $g_0$ on $X$. 

\smallskip

 We will repeatedly use the following  simple consequence of  elliptic theory.
\begin{lemma}
\label{lemma4.1kc}
 Fix $p=(f, J)$ in the set $M_{simple}$ of  \eqref{4.2Msimpletau}.  Suppose that    $\kappa \in \E^{0,s}_p$ and $c\in \F^{0,s}_p$, $\tfrac{1}{s}+\tfrac{1}{r}=1$,  are nonzero weak solutions of   $D_p\kappa=0$ and $D^*_pc=0$.
Then $\kappa\in\E^{l,r}_p$ and $c\in\F^{l,r}_p$, and   there is an injective point $x\in C$ such that  $c(x)\not= 0$ and  $\kappa^N(x) \ne 0$.
\end{lemma}

\begin{proof}
The equation $D^*_pc=0$ means that the $L^2$ inner product 
 $\langle D_p(\zeta, k), c\rangle_{L^2}$ is zero for all $(\zeta, k)$ and therefore, taking $k=0$,   $(D_p^0)^*c=0$.   Lemma~3.4.4 of \cite{ms} then shows that $c$ is in the   Sobolev $(l,r)$ space, hence is continuous, and also shows that  $c$ cannot vanish identically on  any open set in $C$.

   Similarly, $\kappa=(\zeta, k)$ is a weak solution of $D_p^0\zeta= -\tfrac12 Jdfk$ with $k$ smooth  and $f, J$ of class $C^l$, and hence $Jdf k\in \F^{l-1,r}$. Elliptic regularity as in \cite[Proposition C.2.3]{ms}  implies that $\zeta$ is in the  Sobolev $(l,r)$ space, so   $\kappa$ is in $\E^{l,r}$ and is continuous.  If $\kappa$ were everywhere tangent to $C$,  it  would satisfy $D_C\kappa=0$ for the operator  in \eqref{1.DC}.  But then $\kappa$ would be smooth, and would contradict the fact that   \eqref{1.DC} is an isomorphism.  Thus $\kappa^N\not= 0$ on some non-empty  open set. 
  
  The lemma follows because  $f$ is  at least $C^2$ so, by  Micallef-White Theorem \cite{mw},  the injective points are open and dense in $C$.  
 \end{proof}

\vspace{5mm}

\setcounter{equation}{0}
\section{The structure of the moduli space}
\label{section5}
\bigskip

 We now consider the completion of the universal moduli space \eqref{4.M.simple} in the Sobolev norms introduced in Section~\ref{subsection4.2}.  For simplicity, we will specify the Sobolev norm only when 
needed.   Thus we fix $(l ,r)$ as in \eqref{4.2.rlcondition} and, without changing notation, let  
\bear\label{5.M.simple} 
\begin{CD}
\M_{simple}\\
@VV{\pi}V\\
\hspace{8mm} \J=\J^l
\end{CD}
\eear
be the  universal moduli space  of equivalence classes $[p]$ (up to reparametrizations of the domain)  of pairs $p=(f,J)$, where $J \in \J^l$ and  $f:C\ra X$ is a simple $J$-holomorphic map  of class  $W^{l, r}$ whose domain $C$ is a smooth, connected complex curve. This section and the next provide  a series of facts about the structure of the moduli space \eqref{5.M.simple}. These results are proven  locally by regarding the moduli space as a subset of a slice  \eqref{D.slice.S}.   Proposition~\ref{5.1new} and Lemma~\ref{lemma5.1}  hold for any closed symplectic manifold $X$;  after that we specialize to Calabi-Yau 6-manifolds, for which the index of $\pi$, given by \eqref{1.dimformula}, is zero.


\subsection{The structure of $\M_{simple}$.} 
 It is well-known that the moduli space \eqref{5.M.simple} of simple  maps is a manifold.  We give a precise statement and proof for later use.

 \begin{prop} 
\label{5.1new} 
The universal moduli space in \eqref{5.M.simple} has the following structure:\\[-1mm]

\hspace{.6cm} \begin{minipage}{5.8in} 
\begin{enumerate}[(a)] \itemsep2mm
\item  The set $M_{simple} \subset {\mathcal Slice}_\tau^{m,r, l}$  in \eqref{4.2Msimpletau}  is a  $C^{l-m}$  separable Banach submanifold whose  tangent space  at $p$ is  the kernel of the operator $\L_p$ in \eqref{def.dl.comp}.  
\item     For  $2k \le l-2$,  $\M_{simple}$ is a $C^k$ separable Banach manifold, locally $C^k$ diffeomorphic to the subset $M_{simple}$ of the slice in (a) for each $m$ in the range $1\le m \le l-k$. \\[-2mm]
\end{enumerate}
\end{minipage}
  In particular, $\M_{simple}$ is at least $C^2$ using Sobolev norms in the range \eqref{4.2.rlcondition}.

\end{prop}

\begin{proof}  
(a)   As in \eqref{4.2Msimpletau}, the set  $M_{simple}$ is the zero set of the $C^{l-m}$ section $\Phi$ of $\F^{m-1, r}$   defined  by \eqref{4.defF}.  By the Implicit Function Theorem, 
$M_{simple}$ is a $C^{l-m}$ submanifold of the slice at those points $p$ where  $D\Phi_p$, which is  the operator
${\cal L}_p$  in \eqref{def.dl.comp},  is onto.  By Lemma~\ref{lemma4.1kc}, the surjectivity of ${\cal L}_p$  is independent of $m$ in the range \eqref{4.2.rlcondition}, so it suffices to consider the case  $m=1$.    Surjectivity  fails at  $p=(f,J)$ only if there is a non-zero $c$  in   the dual space $(\F_p^{0,r})^*=\F_p^{0,s}$, $s=\frac{r}{r-1}>1$,   that is $L^2$ orthogonal to ${\cal L}_p(\xi ,K)$ for all $(\xi, K)\in \E_p^{1,r}\oplus T_J\J^l$. By
\eqref{4.formulabigL} and \eqref{4.2.D*def},  this implies that $D_p^*c=0$  weakly, and 
\bear
\label{new5.1}
0=\int_C \lg c,\,  Kf_*j\rg
\eear
for every  variation $K$ in $J$.   By Lemma~\ref{lemma4.1kc}, $c\in\F^{l,r}$ and  there is   an  injective point $x\in C$  where $c(x)\ne 0$.  One can then find a variation  $K_0\in T_J\J^l$ in $J$  that satisfies  $ K_0 f_* j= c$ at the point $x$  (cf. \cite[Lemma~3.2.2]{ms}).  
 Choose local coordinates  $y=(y_1, y_2, \dots)$ on $X$ centered at $f(x)$.  Fix  a  non-negative bump function  $\beta(y)$ supported in this coordinate chart   and for each $\ep>0$ set  $\beta_\ep(y)= c(\ep)\beta (y/\ep)$,  where $c(\ep)$ is  the constant determined by the normalization condition $\int_C f^*\beta_\ep =1$. Then for each continuous function $\phi$ on $C$ we have
\bear\label{eq.beta.delta}
\lim_{\ep\to 0} \int_C f^*\beta_\ep\cdot \phi \ =\ \phi(x).
\eear 
Substituting $K=\beta_\ep K_0$ in \eqref{new5.1} and taking the limit as $\ep\ra 0$ gives a contradiction.   Thus 
\bear\label{5.2bsliceslice}
M_{simple}\subset Slice^{m,r,l}_\tau \
\eear
is a $C^{l-m}$ submanifold. This can be improved: for any $l\ge m \ge m'\ge 1$,  the inclusions 
\bear\label{5.2bsliceslice2}
M_{simple}\subset Slice^{m,r,l}_\tau \subset Slice^{m',r,l}_\tau 
\eear
show that the $C^{l-m}$ atlas obtained from \eqref{5.2bsliceslice} can be refined to an $C^{l-m'}$ atlas  inherited from the enlarged  slice \eqref{5.2bsliceslice2}.    It follows that for  each $k$,
  the inclusion \eqref{5.2bsliceslice} induces a $C^k$ atlas on $M_{simple}$, which is independent of $m$  in the range  $1\le m\le l-k$.

 \smallskip
 (b)  The moduli space is covered by images  of slices of the form \eqref{D.slice.S} under the maps $(f,J)\mapsto ([f],J)$, and any two slices with overlapping image are related by 
a  transition map  \eqref{4.slice-to-slice}.   Although  \eqref{4.slice-to-slice}  appears to lose regularity, its restriction to the moduli space does not.
Specifically,  for any $k \ge 0$ with $l-2k-1\ge 1$, the transition map 
$$
\wh \phi:{\mathcal S}lice_{\tau_1}^{l-k, r,l} \ra {\mathcal S}lice_{\tau_2}^{l-2k-1, r,l}
$$
is $C^{k}$ (cf. \eqref{4.slice-to-slice}), and maps the $C^k$ submanifold $M_{simple}\subset {\mathcal S}lice_{\tau_1}^{l-k, r,l}$ into the corresponding subset  $M_{simple}'$ of 
${\mathcal S} lice_{\tau_2}^{l-2k-1, r,l}$ (because reparameterizations of simple maps are simple). 
 The latter   inherits a $C^{2k+1}$ structure from the slice ${\mathcal S}lice_{\tau_2}^{l-2k-1, r,l}$,  and hence a $C^{k}$ structure.  Moreover, by the last sentence of part (a), this is the same $C^k$ structure that $M'_{simple}$  inherits from its embedding into the slice ${\mathcal S}lice_{\tau_2}^{l-k, r,l}$.   Thus $\wh\phi$ restricts to a $C^k$ bijection from $M_{simple}$ to $M'_{simple}$; reversing the roles of $\tau_1$ and $\tau_2$ shows that this is a $C^k$ diffeomorphism.  This gives a $C^k$ 
atlas on the moduli space. In particular, this applies with $k=[l/2]-1$, and hence  the moduli space has a $C^2$-atlas provided $l\ge 6$.

\end{proof}


\subsection{The   wall $\W$.}

The moduli space $\M_{simple}$ has a distinguished subset:  the  ``wall''    $\W\subset\M_{simple}$ defined as the stratified set 
\bear\label{Wall}
\W = \bigcup_{s\ge 1} \W^s,  \qquad \W^s = \Big\{ [p]  \in \M_{simple}   \, \Big|\, \dim \ker D_p  = s\, \Big\}.
\eear
 Lemma~\ref{lemma4.1kc} implies that $\ker D_p$ and $\ker D_p^*$, and hence $\ind D_p$, are independent of the choice of Sobolev norm in the range \eqref{4.2.rlcondition}.  Furthermore, the dimension of  $\ker D_p $ and of $\cok D_p$ are preserved under smooth reparametrizations  of the domain, 
so \eqref{Wall} is well-defined.

Observe that each $C^l$ embedded complex curve  $\iota_C:  C\hookrightarrow X$ determines a set  $\J_C\subset \J= \J^l$ consisting of all   $J\in \J$ for which $\iota_C$ is $J$-holomorphic.  The  restriction  of (\ref{5.M.simple})  over $\J_C$ has a canonical section $J\mapsto (\iota_C, J)$ whose image is $\M_C= \{\iota_C\}\times \J_C$.

\begin{lemma}
\label{lemma5.1}
For each $C^l$ embedded complex curve $C$,  $\J_C$ is a smooth submanifold of  $\J=\J^l$.
\end{lemma}

\begin{proof}
Let $j$ denote the complex structure on $C$.
Identify $C$ with its image in $X$, and let $E\to C$ be the vector bundle $E=\mbox{End}(TX|_C)$.  At each $x\in C$, the fiber $E_x$ of $E$ contains nested submanifolds $E'_x=\{J\in E_x|J^2=-Id.\}$ and $E_x'' = \{J\in E'_x | \, J|_{T_xC}=j\}$.  As $x$ varies, these define $C^l$  fiber bundles $E'$ and $E''$ over $C$.  Let $\J'$ and $\J''$ denote the spaces of $C^l$ sections of $E'$ and $E''$ respectively.   By standard theory,  $\J'$ is a smooth   Banach manifold and $\J''$ is a  submanifold of $\J'$.  Restricting an almost complex structure $J$ on $X$ to $C$ defines a smooth map
$$
\rho_C:\J\to \J'
$$
with $\J_C=\rho_C^{-1}(\J'')$.  The lemma follows if we prove that $\rho_C$  is a submersion.

At each $J\in\J_C$, the differential of $\rho_C$ is simply the restriction $(d\rho_C)_J(K)=K|_C$, and the tangent bundle to $\J'$ is the set of all $C^l$ sections $Y$ of $E$ that satisfy $JY+YJ=0$.  But every such $Y$ extends to a section $K$ of $T_J\J$: extend $Y$ to a tubular neighborhood of $C$ in $X$, multiply by a smooth cuttoff function to obtain an $\hat{Y}$, and take $K=\frac12(\hat{Y}+J\hat{Y}J)$.  Thus $\rho_C$ is a submersion.
\end{proof}

 Henceforth (until the end of Section~8) assume that $X$ is a symplectic Calabi-Yau 6-manifold.   The  results  below then  show that  $\W^1$ is a codimension~1 submanifold  of $\M_{simple}$ with a distinguished submanifold $\A\subset \W^1$,   that  the other strata $\W^r$ have higher codimension, and that the same is true for the subsets $\W^r\cap \M_C$ and $\A\cap\M_C$  of $\M_C$.

 \begin{prop} 
\label{MmfdThm}  
Let $X$ be a symplectic Calabi-Yau 6-manifold.   Then, as a subset of the universal moduli space in \eqref{5.M.simple}, the wall has the following structure:   \\[-1mm]

\hspace{.6cm} \begin{minipage}{5.8in} 
\begin{enumerate}[(a)] \itemsep2mm
\item $\W$ is the set of  critical points of the projection \eqref{5.M.simple}, and  $\pi$  is a local diffeomorphism on $\M_{simple}\setminus \W$. 
\item $\W^1$ is a codimension $1$ submanifold  of  $\M_{simple}$.
\item For each embedded curve $C$, $\M_C=\{\iota_C\}\times \J_C$ is transverse to  $\W^1$.
\end{enumerate}
\end{minipage}
\end{prop}

\begin{proof} 
(a)     In a slice \eqref{5.2bsliceslice}, the projection $d\pi_p(\xi, K)=K$ of a non-zero  element   in $\ker\, {\cal L}_p$  is zero if and only if $\xi=(\zeta, k)$ is a non-zero element of $\ker\, D_p$.   On the other hand,  $D_p$ has index~0, so is onto at each $p\notin \W$.  At such $p$,  for each $K$  we can use (\ref{4.formulabigL}) to obtain $\xi$ with  $\L_p(\xi, K)=0$;  then $(\xi, K)$ is tangent to $\M$ and $d\pi_p(\xi, K)=K$.  Thus $d\pi$ is an isomorphism at each point not in $\W$, and $\W$ is the collection of critical points of $\pi$.

 \smallskip

 (b)    Fix a representative $p_0$ of a point in the wall $\W^1$   and fix a slice ${\mathcal S}lice={\mathcal S}lice_{\tau}^{1, r,l}$ containing $p_0$. Let $\Fred\to {\mathcal Slice}$ be the  fiber bundle  whose fiber over $p=(f,J)$ is the space of index~$0$ Fredholm operators  from  $\E_p=\E_p^{1, r}$ to $\F_p=\F_p^{0, r}$.  By choosing  a smooth local trivialization of $\E$ and  a $C^{l-1}$ local trivialization of $\F$, we  can identify $\Fred$ with the space of Fredholm operators between two fixed Banach spaces.  Then
$\Fred$ is the union of strata $\Fred^s=\{ D\in \Fred\, |\, \dim\ker D=s\}$, where each $\Fred^s$ is  a submanifold of codimension $s^2$ whose normal bundle at $D$ is naturally identified with $\mbox{Hom}(\ker D, \cok D)$  (cf. \cite{K}, \S1.1b, c).   Associating to $p$ the operator \eqref{def.dl.comp} with $m=1$ defines a  section 
 \bear
\label{4.PsiD}
\Psi(p)= D_{p},
\eear 
of the $C^{l-1}$ bundle $\Fred$.  In fact,  $\Psi$  is the vertical derivative of  the section $\Phi$ described before \eqref{4.2Msimpletau}, and hence is of class  $C^{l-2}$.
On the other hand, $\M_{simple}$ is $C^k$ locally diffeomorphic to the submanifold $M_{simple}$ of the slice for $k$ as in  Proposition~\ref{5.1new}b.  Noting that $k\le l-2$, it  suffices to show that  the restriction of  $\Psi$ to $M_{simple}$  is transverse to $\Fred^1$.

For this purpose, we  consider  a deformation $p_t=(f, J_t)$ of $p_0$  in $M_{simple}$, where both the map and the complex structure on the domain is fixed, while $J_t$ is a path in $\J= \J^l$ whose restriction to $f(C)$ is fixed and which changes only in a small neighborhood $U$ of the image $f(x)$ of an  injective point $x$. Along the path $p_t$,  $\E_{p_t}$ is fixed, since it is independent of $J$, as is $\F_{p_t}$, which depends on $J$ only through its restriction along $f(C)$. Thus we have a 1-parameter family of Fredholm maps $D_{p_t}: \E_p\ra \F_p$ with fixed domain and target. 
The initial derivative $\dot{p}_0$ has  the form $v=(0,0, K)\in  \ker \L_p=T_p M_{simple}$,   where $K$ vanishes along  $f(C)$ and is supported in $U$. Hence  the variation $(\delta_v D)_p=\l. \frac {d} {dt}\r|_{t=0} D_{p_t} $  in the direction $v$ is obtained by replacing $J$ by $J_t$ in  \eqref{1.formulaL} and taking the $t$-derivative at $t=0$. Because  the formula \eqref{1.formulaL} is independent of the connection, we can take the variation with the connection fixed; the formula then shows that $(\de_v D)_p$ is $\frac 12 (\nabla K)f_*j$. Moreover, when applied to an element 
 $\xi=(\zeta, k)$ of $\E_p$,  this variation depends only on the normal component $\xi^N=(\zeta^N, 0)$ of  $\zeta$ because $K\equiv 0$ on $f(C)$:
\bear
\label{4.variationDkappa}
 (\de_v D)_p\xi\ =\ \tfrac 12 (\nabla_{\xi^N} K)f_*j.
\eear 
This formula is tensorial in  $\xi$, does not depend on the connection, and is tensorial in   $K\in T_J\J$ as long as $K\equiv0$ along $f(C)$.

 Now fix   generators $\kappa$ of $\ker D_p$ and $c$ of $\ker D^*_p$;  these are continuous by Lemma~\ref{lemma4.1kc}.  Since the normal space to $\Fred^1$ at $p$ is 1 dimensional, it is enough  to construct a variation in $p$ of the form $(0, 0,K)$ such that  the $L^2$ inner product $\lg c, \; (\de_v D)_p\kappa\rg_{L^2}$ is non-zero.

To find such a $v$,  choose an injective point $x\in C$ of $f$ with both $\kappa^N(x)\ne 0$ and $c(x) \ne 0$ as in Lemma~\ref{lemma4.1kc}.  
As in the proof of  Proposition~\ref{5.1new}b,   there is a $K_0\in T_J  \J^l$  supported near $f(x)$ such that $K_0 df j=c$ at  the  point $x$.  Because $f$ is $C^l$, there is a neighborhood of $x$ in $C$ whose image under $f$ is   an embedded $C^l$ submanifold of $X$.  Hence we can choose  a local   $C^l$ coordinate system  $\{z, y_1, y_2, \dots\}$ centered  at $f(x)$ with $z$ a local complex coordinate  on $f(C)$,  and $\{y_i\}$ real coordinates  vanishing along $f(C)$, and such that $\frac{\partial\ }{\partial y_1}{\big|}_{f(x)}=\kappa^N(x)$.
Then $K=y_1K_0$  lies in $T_J\J^l$, vanishes along $f(C)$,  and satisfies 
\bear\label{4.nabla.K.1}
 (\nabla_{\kappa^N} K) f_*j=c \quad \mbox {at the single point $x$}.
 \eear 
 
Finally, set $v_\ep=(0, 0, 2 \beta_\ep K)$  with $\beta_\ep$  as in \eqref{eq.beta.delta}.  Replacing $K$ by $2\beta_\ep K$ in  \eqref{4.variationDkappa} and using \eqref{4.nabla.K.1},  one sees  that 
\bear
\label{4.16}
\lim_{\ep\to 0}\ \int_C\lg  c, \; (\de_{v_\ep}  D)_p\kappa \rg \ =\  |c(x)|^2 \ \not=\ 0.
\eear
In particular,  there is a variation with $\lg c,\; (\de_v D)_p\kappa\rg_{L^2} \ne 0$, which proves statement (c).
\smallskip

Statement (c) holds because the transversality above was obtained using a variation $v_\ep=(0, 2\beta_\ep K)$ tangent to $\M_C$.
\end{proof} 


\subsection{The structure of $\W\setminus\W^1$}  

The  proof of Proposition~\ref{MmfdThm}b extends to show that the part of $\W$ not in $\W^1$  has codimension~3 in the following sense.
\begin{defn}
\label{4.defcodimkset}
We say that a subset $S$ of a manifold $M$ {\em has codimension $k$}  if it  is contained in a countable union $\bigcup \rho_\ell(S^\ell)$, where each $\rho_\ell:S^\ell\to M$ is a Fredholm map of  separable Banach manifolds with $\ind \rho_\ell\le -k$.
\end{defn}

\begin{lemma} 
\label{lemma4.7}
$\W \setminus \W^1$ has  codimension~3 in $\M_{emb}$, and  $(\W \setminus \W^1)\cap \M_C$ has codimension~3 in $\M_C$.  
\end{lemma}

 C.~Taubes obtained  a similar result in dimension~4 using  analytic perturbation theory (Step~5 of the proof of Lemma~5.1 in \cite{t}).  
  A proof in the spirit of the above arguments is given in the  second appendix as Proposition~\ref{PropA4};  Lemma~\ref{lemma4.7} is a special case.


\subsection{The structure of $\W^1$}  
We next examine the portion of the top stratum $\W^1$ of the wall \eqref{Wall} that lies in the  open subset $\M_{emb}$ of $\M_{simple}$ consisting of embedded maps.  Our goal is to show that the projection
\vspace{-2mm}    
\bear
\label{4.pifromW}
\pi_{\W}:\W^1\to\J
\eear
obtained by restricting \eqref{5.M.simple} to $\W^1$ is an immersion off a set of codimension~1.   For this purpose, we  first introduce locally defined functions $\psi$ that vanish transversally along $\W^1$.
 
\medskip

  Fix a slice  ${\mathcal Slice}$ containing   a representative $p_0$ of a point in $\W^1$,  regard $\W^1$ locally as a subset of this slice, and consider the vector bundles $\E=\E^{1,r}$ and $\F=\F^{0,r}$ on the slice.   At every $p\in \W^1$,  the operators  $D_p:\E_p\to \F_p$ and $D^*_p:\F_p^{1,r}\to \E_p^{0,r}$  defined by  \eqref{D.formula.lin} and \eqref{4.2.D*def} have $1$-dimensional kernels.  These kernels determine subbundles 
 \bear
 \label{4.EFsplitting}
\E^0\subset \E, \qquad\qquad  \F^0\subset \F
\eear
along $\W^1$, and the projection $\pi_{cok}:\F^0\to \F/\im D$ onto the cokernel bundle along $\W^1$ is an isomorphism.  By choosing sections  of $\E^0$ and  $\F^0$  along the submanifold $\W^1$ and extending, we can find  non-vanishing  $C^2$  local sections $\kappa$ of $\E$ and $c$ of $\F$, defined  in a neighborhood $U$ of $p_0$ in the slice,  such that the restrictions to $\W^1$ are local sections of $\E^0$ and $\F^ 0$ respectively.  Let $\psi: U\to \R$ be the function defined by 
\vspace{-2mm}
\bear
\label{4.defpsi} 
\psi(q)\ =\ \int_C \langle c_q, D_q\kappa_q\rangle,
\eear
 using the same metrics and volume forms as in \eqref{4.2.D*def}. 

Clearly, $\psi$ vanishes along $\W^1\cap U$, where $D_q\kappa_q=0$.  Differentiating $\psi(q_t)$ for any path $q_t$ in $U$ with $q_0=p\in \W^1\cap U$ and initial velocity $\dot{q_0}=u$ yields several terms, including the variation in the inner product and volume form.  All except one vanish at $q_0=p$  because $D_p\kappa_p=0$ and  $D_p^*c_p=0$, showing that 
\vspace{-2mm}
\best
(d\psi)_p(u)= \int_C  \lg c_p, \;  (\de_u D)_p\kappa_p  \rg. 
\eest 
The proof of  Proposition~\ref{MmfdThm}b  produces   variations   showing that $(d\psi)_p\not= 0$ for all  $p\in \W^1\cap U$. 
Thus the restriction of $\psi$ to $\M_{simple}\cap U$ vanishes transversally  along $\W^1\cap U$.   In particular, we have
\bear
\label{4.4.TW=dpsi}
T_p\W^1=(\ker d\psi)_p \qquad \forall p\in\W^1\cap U.
\eear

\smallskip

\begin{lemma}
\label{Alemma}
Inside $\M_{emb}$, the subset $\A$ of  $\W^1$  where the projection \eqref{4.pifromW} fails to be a immersion   is a codimension~1 submanifold of $\W^1$,  and $\M_C$ is transverse to $\A$. 
\end{lemma}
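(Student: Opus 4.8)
The plan is to realize $\A$ as a zero set of an auxiliary real function on $\W^1$ whose differential is nonzero along the path used in Theorem~\ref{MmfdThm}(c), exactly as in the treatment of the wall $\W^r$ itself. First I would set up the linear algebra at a point $p=(f,J)\in\W^1$. Since $T_p\W^1\subset T_p\M_{simple}=\ker\L_p$ and $d\pi(\xi,K)=K$, the projection $\pi_\W$ fails to be an immersion at $p$ precisely when there is a nonzero $(\xi,0)\in T_p\W^1$, i.e. a nonzero $\xi\in\ker D_p$ that is also tangent to $\W^1$. Tangency to $\W^1$ at $p$ is the condition $d\psi_p(\xi,0)=0$, where $\psi$ is the determinant-line section of \eqref{4.defpsi}; computing $d\psi_p$ using the chosen eigen-sections $\kappa,c$ of \eqref{4.EFsplitting} gives, for a variation $(\xi,0)$ with the domain and $J$ moving, an expression of the shape $\int_C\langle c_p,(\delta_{(\xi,0)}D)\kappa_p\rangle$ (the terms where $\kappa$ or $c$ themselves vary drop out because $D_p\kappa_p=0$ and $D_p^*c_p=0$). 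Because $\dim\ker D_p=1$, $\kappa_p$ spans $\ker D_p$, so the first-order failure of immersion is governed by the single real number $\sigma(p):=\langle c_p,(\delta_{\kappa_p}D)\kappa_p\rangle$, where $\delta_{\kappa_p}D$ denotes the variation of $D$ along the tangent vector $(\kappa_p,0)\in\ker\L_p$ (the "quadratic form of the wall"). I would then \emph{define} $\A=\{p\in\W^1\cap\M_{emb}\mid \sigma(p)=0\}$ and check this agrees with the set where \eqref{4.pifromW} is not an immersion.

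Next I would prove $\A$ is a codimension-$1$ submanifold of $\W^1$ by showing $\sigma:\W^1\cap\M_{emb}\to\R$ is a submersion at every point of its zero set. Fix $p\in\A$; I must produce a tangent vector to $\W^1$ at $p$ along which $d\sigma\ne0$. Following the scheme of Theorem~\ref{MmfdThm}(c), I would deform $J$ in a neighborhood of the image of an injective point $x\in C$, keeping $J$ fixed along $f(C)$ and keeping the domain fixed; such a deformation $v=(0,K)$ with $K|_{f(C)}=0$ lies in $T_p\M_{simple}$, and since it preserves $\dim\ker D=1$ to first order (one must arrange the eigenvalue stays zero — but along $\W^1$ that is automatic since we move inside $\W^1$ by adjusting $\kappa$; alternatively use that the normal derivative $d\psi$ pairs the move against $\mathrm{Hom}(\ker D_p,\cok D_p)$ and choose $K$ in the kernel of that pairing) it is tangent to $\W^1$. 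The derivative $d\sigma_p(v)$ is then a sum of three kinds of terms: the variation of $c_p$, the variation of $\kappa_p$, and the second variation $\langle c_p,(\delta_v\delta_{\kappa_p}D)\kappa_p\rangle$. Using \eqref{4.variationDkappa} — namely $(\delta_v D)\xi=\tfrac12(\nabla_{\xi^N}K)f_*j$, which is tensorial and supported where $K$ is — the dominant term as $K$ concentrates near $f(x)$ is $\tfrac12\int_C\langle c_p,(\nabla_{\kappa_p^N}K)f_*j\rangle$-type, and by Lemma~\ref{lemma4.1kc} one can choose $x$ with $\kappa_p^N(x)\ne0$ and $c_p(x)\ne0$ and then, exactly as in \eqref{4.AdeltaD.2}, build a local $K=y_1\beta K_0$ making this integral nonzero while the lower-order terms (involving the variation of $\kappa_p$, $c_p$, which are controlled and not concentrated) are negligible for $\beta$ supported close enough to $f(x)$. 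This gives $d\sigma_p(v)\ne0$, so $0$ is a regular value of $\sigma$ and $\A$ is a codimension-$1$ submanifold of $\W^1$.

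Finally, transversality of $\M_C$ to $\A$: since the deforming path $p_t=(f,J_t)$ constructed above lies entirely in $\M_C$ (the map $f=\iota_C$ and the domain are fixed; only $J$ varies, away from $C$), the tangent vector $v$ realizing $d\sigma_p(v)\ne0$ lies in $T_p\M_C$. Hence $T_p\M_C$ is not contained in $\ker d\sigma_p=T_p\A$, which — together with $\M_C\subset\W^1$ being already transverse considerations handled in Theorem~\ref{MmfdThm}(d) — gives $T_p\M_C+T_p\A=T_p\W^1$, i.e. $\M_C\pitchfork\A$ inside $\W^1$.

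\textbf{Main obstacle.} The delicate point is the second-variation bookkeeping for $d\sigma$: one must verify that when $K$ concentrates near $f(x)$, the genuinely new ("second-order") contribution $\langle c_p,(\delta_v\delta_{\kappa_p}D)\kappa_p\rangle$ and the terms coming from the forced variation of the eigen-sections $\kappa_p,c_p$ do not conspire to cancel the leading term \eqref{4.AdeltaD.2}. The right way to handle this is to note, as in the proof of Theorem~\ref{MmfdThm}(c), that $(\delta_v D)_p$ is \emph{tensorial} in both $\xi$ and $K$ and depends on $\xi$ only through $\xi^N$, so the leading term scales like the $L^1$-mass of $\beta$ while every competing term carries an extra derivative of $\beta$ or is supported on a set of measure $O(\|\beta\|_{L^1})$ times a bounded factor independent of the concentration; choosing $\beta$ supported in a sufficiently small ball makes \eqref{4.AdeltaD.2} strictly dominate. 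Everything else — the identification of $\A$ with the non-immersion locus, and the submanifold/transversality statements — is then formal.
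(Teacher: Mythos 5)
Your overall strategy is the paper's: identify $\A$ inside $\W^1$ as the zero locus of $\sigma(p)=d\psi_p((\kappa,0)_p)$ (this matches \eqref{4.defofAtwice}), and then show $\sigma$ has nonzero derivative along some $w=(0,K)$ tangent to both $\W^1$ and $\M_C$, with $K$ supported near the image of an injective point supplied by Lemma~\ref{lemma4.1kc}. But the execution of the key step has a genuine gap. With your choice ($K|_{f(C)}=0$ but $\nabla K\not\equiv 0$ along $f(C)$), the vector $(0,K)$ is tangent to $\W^1$ only if $d\psi_p((0,K))=\tfrac12\int_C\langle c,(\nabla_{\kappa^N}K)f_*j\rangle=0$ --- and this is exactly the integral you then propose to make nonzero (with $K=y_1\beta K_0$) as the ``dominant term'' of $d\sigma_p(w)$. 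So either that integral is nonzero, in which case $w\notin T_p\W^1$ and you have merely re-proved the transversality of $\W^1$ (Theorem~\ref{MmfdThm}(c)) rather than anything about $\A$, or you impose tangency (your parenthetical ``choose $K$ in the kernel of that pairing''), in which case your leading term vanishes identically and the argument collapses. Relatedly, $\tfrac12(\nabla_{\kappa^N}K)f_*j$ is the \emph{first} variation $(\delta_wD)\kappa$ of \eqref{4.variationDkappa}, not the second variation $\delta_w\delta_{(\kappa,0)}D$ that actually enters $d\sigma_p(w)$; and the concentration/scaling heuristic does not dispose of the eigen-section variations $\dot\kappa,\dot c$, which are genuinely nonzero once $\nabla K\neq0$ along $f(C)$ and do not carry extra derivatives of $\beta$.

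The paper's proof removes all of these difficulties by demanding the full 1-jet vanish: $K=\nabla K=0$ along $f(C)$. Then $\E$, $\F$ and $D_{p_t}$ are literally constant along $p_t=(f,J_t)$, so the path stays in $\W^1\cap\M_C$ (tangency is automatic, no adjustment of $\kappa$ is needed) and the sections $\kappa,c$ can be chosen $t$-independent; in the mixed second derivative $\partial_s\partial_t\psi$ every term then vanishes at the origin except the one differentiating $K$ twice, leaving $(H\psi)_0(\partial_s,\partial_t)=\tfrac12\int_C\langle c,(\nabla_{\kappa^N}\nabla_{\kappa^N}K)f_*j\rangle$, which is made nonzero by taking $K=\tfrac12 y_1^2\beta K_0$ (quadratic, not linear, in the normal coordinate). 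Your argument becomes correct once you replace your condition on $K$ and the choice $K=y_1\beta K_0$ by this stronger vanishing and the quadratic profile.
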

\begin{proof} 
Fix  a point in $\A$ and a slice containing it, and work locally in  a neighborhood $U$  on which $\psi$ is defined by \eqref{4.defpsi}.  Consider the vector field  $v=(\kappa, 0)$ on $U$,  where  $\kappa$ is  the non-vanishing  local section chosen above  \eqref{4.defpsi}. 
The proof of  Proposition~\ref{MmfdThm}a shows that at every $p\in \W^1$,  $\ker (d\pi)_p$ is spanned by   $v_p$,  so  
$\pi_{\W}$ fails to be an immersion at $p$ if  and only if $ v_p\in T_p\W^1$.  Together with  \eqref{4.4.TW=dpsi},  this gives two local descriptions of $\A$:
\bear
\label{4.defofAtwice}
\A\ =\ \left\{p\in\W^1\, \big|\, (\kappa, 0)_p\in T_p\W^1 \right\}    \ =\ 
\left\{p\in\W^1 \, \big|\, (d\psi(v))_p=0  \right\}.  
\eear
By the second description,  it suffices to show that  the restriction of the function $d\psi(v):U\to \R$ to $\W^1$  vanishes transversally at each $p\in \A$.  

Consider variations in $p=(f,J)$  of the form $w=(0,K)$,  where  $K$ is an element of $T_{J}\J^l$ such that both $K=0$ and $\nabla K=0$ along the image  $f(C)$. We will show that for every such  variation
\bear
\label{4.wtangent}
 w=(0,K)\in T_p\W^1
\eear
and 
\bear
\label{4.intrinsicnabladpsi}
\nabla_w(d\psi(v))\big|_p\ =\     \tfrac12\int_C  \langle c,\;(\nabla_{\kappa^N}\nabla_{\kappa^N} K)df j \rangle.
\eear
The argument used to obtain \eqref{4.nabla.K.1} and   \eqref{4.16}  then produces a $K=\frac 12 y_1^2\beta_\ep K_0$  in $T_J\J^l$  with $K=0$ and $\nabla K=0$ along $f(C)$ that makes the integral \eqref{4.intrinsicnabladpsi} non-zero, which shows transversality  at $p$.  Furthermore, if $p\in\M_C$, these variations $w=(0,K)$ are tangent to $\M_C$.
 Thus both parts of the lemma follow from  \eqref{4.wtangent} and \eqref{4.intrinsicnabladpsi}. 

\smallskip

We will prove   \eqref{4.wtangent} and \eqref{4.intrinsicnabladpsi}  by  
 constructing a 2-parameter family of deformations of $p$ in $U$ that is tangent to $v$ and $w$ at $p$.   
For clarity, write $p$ as $p_0=(f_0, J_0)$.   Start by choosing  a path $J_t$ through $J_0$  with initial velocity $K\in T_{J_0}\J^l$ such that the restrictions of $J_t$ and  $\nabla^0J_t$ to $f_0(C)$ are independent of $t$.    Along the path $q_t=(f_0, J_t)$ in $U$,  $\E_{q_t}$, $\F_{q_t}$ and the operators $D_{q_t}$ defined by \eqref{D.formula.lin} are constant, because all three  depend only on the 1-jet of $J_t$ along $f_0(C)$, which is fixed. 
Thus the path $(f_0, J_t)$ is in $\W^1$ for all $t$, so its initial tangent $w$  satisfies \eqref{4.wtangent}. We can assume that the local sections $\kappa$ of $\E$ and $c$ of $\F$ used to define $\psi$ were  chosen so that $\kappa=\kappa_0$ and $c=c_0$ along the path $(f_0, J_t)$.

Next, choose a smooth family of maps $f_s:(C, j_s)\ra X$ with initial tangent vector $\kappa_0$.  Then 
\bear
\label{4.4.family}
p_{s, t}=(f_s, J_t)
\eear
 is a 2-parameter family in  the slice with $p_*\partial_s=(\kappa_0, 0)=v$ along $p_{0,t}$ and $p_*\partial_t=(0, K)=w$ at $s=t=0$.  Write the restrictions of the sections $\kappa$, $c$ and the operators \eqref{D.formula.lin} as $\kappa_{s,t}$, $c_{s,t}$ and $D_{s,t}$ respectively, and set
\bear\label{def.eta.st}
\eta_{s, t}= D_{s, t} \kappa_{s,t}.
\eear
For $s=0$, $\eta_{0,t} = D_{f_0,J_t}\kappa_0$ is $0$  by construction; we also have $c_{0,t}=c_0$,   $\kappa_{0, t}= \kappa_0$,  and therefore
\bear
\label{4.initial_eta}
\eta_{0,0} = 0,  \qquad   (\partial_t \eta)_{0,0} = 0, 
 \qquad  (\partial_t c)_{0,0}=0, \qquad   (\partial_t \kappa)_{0,0}=0.     
\eear

With this notation, the restriction of  \eqref{4.defpsi} to the family \eqref{4.4.family} is the function
 \bear\label{def.phi.st} 
\psi (s, t)\ =\  \psi(p_{s, t})
\ =\ \int_{C} \lg  c_{s,t}, \;  D_{s,t} \kappa_{s,t} \rg_s
\ =\  \int_{C} \lg  c_{s,t}, \;  \eta_{s,t}\rg_s,
\eear
 where the pointwise inner product and the area form depend on $s$ but not on $t$.
Along the path $p_{0, t}$,  we have  $d\psi(v)=d\psi(p_*\partial_s) = \partial_s\psi$.  Differentiating in the $w$ direction and noting that  $w=p_*\partial_t$  at the origin    then gives 
\best
\nabla_w(d\psi(v)) _{p_0}\ =\  (\pd_t \pd_s \psi)_{0,0}\ =\  (\pd_s \pd_t \psi)_{0,0}.  
\eest 
 
To complete the proof,  we will calculate $(\pd_s \pd_t \psi) _{0,0}$ by differentiating  \eqref{def.phi.st}.  For fixed $s$,   $\kappa_{s, t}$  is a path  of sections of the fixed bundle $f_s^*TX$,  while both  $c_{s, t}$ and $\eta_{s, t}$ are paths of 1-forms on $C$ with values in the fixed bundle $f_s^*TX$ (they are (0,1) forms with respect to the pair $(j_s, J_t)$).  Hence   we have \\
\bear\label{var,phi.t}
(\partial_t \psi)_{s,0}=   \int_{C} \lg ( \partial_t c)_{s,0}, \;  \eta_{s,0}\rg_s +  \int_{C} \lg  c_{s,0}, \;  (\partial_t \eta)_{s,0}\rg_s.
\eear
Now differentiate  \eqref{var,phi.t} with respect to $s$  and evaluate at $s=0$ using \eqref{4.initial_eta}.   The contribution of the first integral vanishes because  
$( \partial_t c)_{0,0}=0$ and $\eta_{0, 0}=0$,  leaving
\bear\label{var,phi.t.s}
(\partial_s\partial_t \psi)_{0,0}\ =\  \partial_s \big|_{s=0}  \int_{C} \lg  c_{s,0}, \;  (\partial_t \eta)_{s, 0}\rg_s. 
\eear
By \eqref{def.eta.st},  $\eta_{s,t}$ is given by the operator  \eqref{D.formula.lin} with  $q=(f_s, J_t)$,  applied to $\kappa_{s,t}=(\zeta_{s,t}, k_{s, t})$.  In the resulting formula, fix 
$s$ and differentiate with respect to $t$.  Because  $f_s$  and $j_s$ are independent of $t$ and $(\partial_tJ_t)_0=K$,  one sees that 
 $(\partial_t \eta)_{s, 0}$  has the general form  
\bear\label{var,phi.t2}
(\partial_t\eta)_{s, 0} \ =\   D_{s, 0} ((\pd_t \kappa)_{s, 0})\ +\ \tfrac 14 (\nabla^0_{\zeta_s} K) (df_s j_s +J_0df_s)  + T_{s,t}(K), 
\eear
where  $T(K)$ is a sum of terms, each linear and tensorial in $K$. The contribution of the first term of \eqref{var,phi.t2}  to \eqref{var,phi.t.s}   is 
\best
\partial_s \big|_{s=0} \int_{C} \lg c_{s,0}, \; D_{s, 0}( (\partial_t \kappa)_{s, 0})\rg_s= \partial_s \big|_{s=0} \int_{C} \lg  D^*_{s, 0}c_{s,0}, \;  (\partial_t \kappa)_{s, 0}\rg_s, 
\eest
as in \eqref{4.2.D*def}.   Taking the $s$-derivative at $s=0$ yields three terms, all of which vanish because $D^*_{0,0}c_{0, 0}=0$ and $(\partial_t \kappa)_{0, 0}=0$.  Similarly inserting the remaining terms of  \eqref{var,phi.t2} into \eqref{var,phi.t.s} and differentiating yields many terms;  all but one vanish because   $K$ and $\nabla^0K$ vanish along $f_0(C)$.  After noting that $f_0$ is $J_0$-holomorphic, one is left with
$$
(\partial_s\partial_t \psi)_{0, 0}\ =\  
\tfrac 12 \int_{C} \lg c_0,\;  \nabla^0_{\zeta_0} \nabla^0_{\zeta_0} K df_0 j_0   \rg_0.
$$
Because  $K$ and $\nabla K$  vanish along $f_0(C)$, this expression is  independent of the  connection, and its dependence on $\zeta_0$ involves only  the normal component $\zeta^N_0=\kappa^N_0$.  Thus
\best
\nabla_w(d\psi(v)) \big| _{p_0}\ =\ (\partial_s\partial_t \psi)_{0,0}\ =\  
 \tfrac12 \int_{C} \lg c_0, \; \nabla_{\kappa_0^N} \nabla_{\kappa_0^N} K df_0 j_0 \rg_0.
\eest  
 This verifies \eqref{4.intrinsicnabladpsi} and completes the proof.
  \end{proof}

\vspace{5mm}
\setcounter{equation}{0}
\section{Local models for wall crossings}
\label{section6}
\bigskip

We next  study the local geometry of the moduli space around a point $p$ on the wall 
$\W^1\setminus\A$. We assume that  $p$ corresponds to  a $J$-holomorphic embedding $f:C\to X$ of a smooth curve $C$,  which we  can regard as the inclusion 
$\iota_C:  C\hookrightarrow X$ of its image.  The goal is to show that the restriction $\pi_\gamma:\M^\gamma_{emb}\to \gamma$ of \eqref{5.M.simple} over a generic path $\gamma\subset \J$     is  a Morse function at $p$, and hence is locally described by a quadratic equation.  Two types of smooth paths  in $\J=\J^l$ passing  through $J$  are relevant for our purposes: 
\medskip

 \noindent{\sc Type A.}  $\gamma_A$ is a path in $\J$ such that the projection $\pi$ is transverse to $\gamma_A$ at $p$.

\medskip

\noindent{\sc Type B.}  $\gamma_B$ is a path in $\J_C$  whose lift $\wt \gamma_B=\{ \iota_C\} \ti \gamma_B$ to $\M_C$ is transverse to $\W^1$ at $p$. 

\medskip

The lemmas at the end of this section show that both types of paths are generic.  But first,  we will use Kuranishi's method to construct a local model for the moduli space over these paths.  It is convenient to  study both types  simultaneously by considering  embedded parameterized disks 
\bear
\label{5.displayS}
S=\{(t, s)\} \subset \J
\eear
 whose $t$-axis is  a path $\gamma_B$  of Type~B and whose $s$-axis is a path 
$\gamma_A$ of Type~A.  We then restrict \eqref{5.M.simple} over $S$ to obtain  
\bear\label{pi.to.S}
\pi_S:\M^S\to S, 
\eear where  $\M^S=\pi^{-1}(S)$ is the moduli space over $S$.  As in Section~5, we regard $p$ as a point in a slice ${\mathcal Slice}$ of the form \eqref{D.slice.S} with $m=1$ and, without changing notation,  locally identify  $\M_{emb}$ with the corresponding submanifold of the slice.

\begin{defn}
\label{5.deflocalmodel}
A 3-ball $B\subset {\mathcal Slice}$ with coordinates $(x, y ,z)$ centered at $p=(f,J)\in \W^1\setminus\A$ is    {\em adapted to $S$ at $p$} if
 \begin{enumerate}[(a)]
\item    $\pi:B\to S$ is given by   $  \pi(x,y,z)= (y,z)$. \\[-2mm]
\item $\gamma_B(t) = (t,0)$ is a Type B path whose lift is $\tilde{\gamma}_B(t) = (0, t,0)$.\\[-2mm]
\item $\gamma_A(s) = (0,s)$ is a Type A path. 
\\[-2mm]
\item In terms of the splitting \eqref{4.ETJ}, $T_pB$ is spanned by $\partial_x|_p = (\kappa, 0)$, 
$\partial_y|_p = (0, K_B)$, and $\partial_z|_p =(0,K_A)$,  where $\kappa$ generates $\ker D_p\cong \R$, $K_A=\dot\gamma_A(0)$ and $K_B=\dot\gamma_B(0)$.   
\end{enumerate}
\end{defn}
With these assumptions, the transversality conditions in Type A and B are equivalent to
\bear
\label{type.AB.int}
(a)\ \ K_A\not\in\im d\pi_p
\qquad\mbox{and}\qquad 
(b)\ \ (0, K_B) \not\in T_p \W^1
\eear
respectively.  The requirement  that $\gamma_B\subset \J_C$ also implies that  $K_B$ vanishes along   $f(C)$, so $(0, K_B)\in \ker \L_p=T_p\M_{emb}$;  because $d\pi_p(0, K)=K$ this also means that  $K_B\in  \mathrm {im} \;d\pi_p$ and hence $K_A$ and $K_B$ are linearly independent.

\vspace{.5cm}

\begin{theorem}[Kuranishi model]
\label{Kuranishi5.2}
 For each $p=(f, J)$ in $\M_{emb}\cap(\W^1\setminus \A)$ and each $S\subset \J$ as in \eqref{5.displayS} centered at $J$, there is a 3-ball $B$ adapted to $S$ at $p$ such that  $\M^S$ is locally  the 2-manifold 
\bear
\label{finalS}
V\ =\ \big\{(x,y,z)\in B\, \big|\, z= x\big(ax+by + r(x, y)\,\big) \big\}
\eear
with $a, b \not= 0$,  where   $r(x,y)=O(x^2+y^2)$ near the origin.  Moreover $\W^1\cap \M^S$ is locally modeled on the zero locus of the function   $w:\M^S\to\R$ given by
\best
w(x, y)=z_x(x, y)=2ax+ by+(xr(x, y))_x. 
\eest
The tangent space $T_p(\W^1 \cap \M^S)$ is the  intersection of the kernels of the 1-forms $dz$ and
\best
d w\ \ =\   2a\,dx+ b\,dy
\eest
at the origin. 
\end{theorem}

\begin{figure}[h]
\label{figure2}
  \begin{minipage}[c]{2.1in}
\begin{tikzpicture}
\node[anchor=south west,inner sep=0] at (0,0) {\includegraphics[scale=1.25]{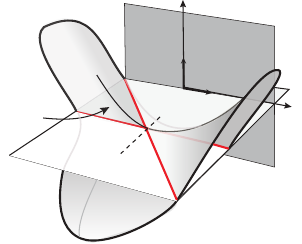}};


\node at (4.6, 3.45) {$K_B$};
\node at (3.5, 3.8) {$K_A$};
\node at (1.48, 4.2) {$\M^S$};
\node at (.1, 2.8) {$\W^1\cap \M^S$};
\node at (2.4, 1.6) {$\ker D_p$};
\node at (2.95, 4.75)  {$S$};
\node at (4.14, 5.1) {$z$};
\node at (6.3, 2.7) {$y$};

\end{tikzpicture}
  \end{minipage} \hspace{11mm}
 \begin{minipage}{3.3in} \begin{flushright} 
\caption{ In the local model, $\M^S$ is a saddle  and $\pi:\M^S\to S$ is the projection onto the $yz$-plane (at the back).  At the origin, $T_p\M^S$ is spanned by $\partial_x \subset \ker D_p$ and $\partial_y=(0, K_B)$, while  $\partial_z=(0,K_A)$ is normal  to $\M^S$. }
\end{flushright}  \end{minipage}
\end{figure}

Theorem~\ref{Kuranishi5.2} shows that $\M^S$ is locally a saddle surface in $B\subset\R^3$ given as the graph of a function $z=z(x,y)$ that  has a non-degenerate critical point at the origin. 
Before giving the proof, we record the two cases that will be used in later sections.

\begin{cor}
\label{Cor4.3}  
Suppose that $p=(f,J)\in \M_{emb}\cap(\W^1\setminus \A)$ and  $\gamma$ is a path in  $\J$ through $\gamma(0)=J$.   
\begin{enumerate}[(a)]
\item If $\ga$ is of Type~A,   then $\M^\ga$ is locally modeled at $p$ by
\bear
\label{KModel1}
\big\{(x,  t)\, \big|\, t= ax^2\big\},  
\eear
 with $a\ne 0$ and $\pi_\gamma(x,  t)=t$.

\smallskip  
\item If $\ga$ is of Type~B,  then there is a disk $S\subset \J$ centered at $p$  locally containing $\gamma$ such that $\M^S$ is locally   modeled at $p$ by
\bear
\label{KModel2}
\big\{(x, t, s )\in B\, \big|\, s= x(ax+bt+r(x, t))\big\}, 
\eear
 with $a, b\ne 0$, $r(x,t)=O(x^2+t^2)$, and  $\pi_S(x,t,s)=(t, s)$, and such that  
 the restriction  to $\M^\ga$ is the restriction to the plane $s=0$, namely
\bear
\label{KModel2B}
\big\{(x, t)\, \big|\, 0= x(ax+bt+r(x, t))\big\}.
\eear
\end{enumerate}
\end{cor}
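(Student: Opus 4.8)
The plan is to deduce Corollary~\ref{Cor4.3} directly from the Kuranishi model of Theorem~\ref{Kuranishi5.2} by specializing the two-parameter family $S$ to a one-parameter path. For part~(b), this is essentially immediate: given a Type~B path $\gamma=\gamma_B$, I first choose an auxiliary Type~A path $\gamma_A$ through $J$ (the lemmas at the end of the section guarantee these are generic, hence available) so that together they span an embedded disk $S\subset\J$ as in \eqref{5.displayS}, with $\gamma_B$ as the $t$-axis and $\gamma_A$ as the $s$-axis. Applying Theorem~\ref{Kuranishi5.2} to this $S$ produces an adapted $3$-ball $B$ with coordinates $(x,y,z)=(x,t,s)$ in which $\M^S$ is the surface $\{s=x(ax+bt+r(x,t))\}$ with $a,b\neq0$ and $\pi_S(x,t,s)=(t,s)$; this is \eqref{KModel2}. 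Since $\gamma_B(t)=(t,0)$ in these coordinates (Definition~\ref{5.deflocalmodel}(b)), the moduli space $\M^{\gamma_B}=\pi_S^{-1}(\{s=0\})$ is cut out by intersecting $V$ with the plane $s=0$, giving $\{0=x(ax+bt+r(x,t))\}$, which is \eqref{KModel2B}.

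For part~(a), the idea is to restrict the same surface $V$ to the orthogonal plane. Given a Type~A path $\gamma=\gamma_A$, I pick an auxiliary Type~B path $\gamma_B$ and form the disk $S$ exactly as above, so Theorem~\ref{Kuranishi5.2} again gives $V=\{z=x(ax+by+r(x,y))\}$ in coordinates $(x,y,z)$ with $\pi(x,y,z)=(y,z)$ and $\gamma_A(s)=(0,s)$ (Definition~\ref{5.deflocalmodel}(c)). Now $\M^{\gamma_A}=\pi_S^{-1}(\{y=0\})$ is obtained by setting $y=0$ in $V$, which yields $\{z=x(ax+r(x,0))\}=\{z=ax^2+O(x^3)\}$ with $a\neq0$. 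The one remaining point is cosmetic: this is the graph of $z=z(x)$ with a nondegenerate critical point at the origin, so after the change of coordinate $\tilde x$ given by $\tilde x=x\sqrt{1+r(x,0)/(ax)}$ (a smooth diffeomorphism near $0$ since $r(x,0)=O(x^2)$, so the radicand is $1+O(x)$), the defining equation becomes $t=a\tilde x^2$, and $\pi_\gamma$ is $(x,t)\mapsto t$ in either coordinate. Renaming $\tilde x$ as $x$ gives \eqref{KModel1}.

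The only genuine content here beyond bookkeeping is the existence of a Type~A path through $J$ transverse to $\pi$ (needed to build $S$ in part~(b)) and of a Type~B path with lift transverse to $\W^1$ (needed in part~(a)); both are precisely the genericity statements promised in the lemmas closing this section, on which I am entitled to rely. I expect no real obstacle: the quadratic normalization in part~(a) is a one-variable Morse-lemma computation, and the plane-section arguments are immediate once the adapted ball of Theorem~\ref{Kuranishi5.2} is in hand. The main thing to be careful about is consistency of the coordinate conventions in Definition~\ref{5.deflocalmodel}—namely that $\partial_x$ generates $\ker D_p$, $\partial_y=(0,K_B)$, $\partial_z=(0,K_A)$, and $\pi(x,y,z)=(y,z)$—so that restricting to $y=0$ really does recover the moduli space over $\gamma_A$ and restricting to $s=0$ recovers it over $\gamma_B$.
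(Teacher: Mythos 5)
Your proposal is correct and follows essentially the same route as the paper: in each case one completes the given path to a disk $S$ spanned by an auxiliary direction of the other type, applies the Kuranishi model of Theorem~\ref{Kuranishi5.2}, and restricts to the appropriate coordinate plane, with the one-variable Morse-lemma reparametrization of $x$ handling case (a) exactly as the paper's ``after reparameterizing $x$.'' One small correction: the auxiliary directions $K_A\notin\im d\pi_p$ and $(0,K_B)\notin T_p\W^1$ exist not because of the path-genericity Lemmas~\ref{4.Jisol.e.0} and \ref{Lemma5.7}, but simply because $d\pi_p$ has one-dimensional cokernel at $p\in\W^1$ and because $\M_C$ is transverse to $\W^1$ by Theorem~\ref{MmfdThm}(d).
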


\begin{proof}
In the first case, take $\gamma_A=\gamma$,  choose $\gamma_B\subset \J_C$ a path whose tangent vector $K_B$ satisfies (\ref{type.AB.int}b), and  choose a local embedded disk $S\subset \J$ containing both $\gamma_A$ and $\gamma_B$ locally near $p$.  Then apply  Theorem~\ref{Kuranishi5.2} and restrict to the plane $\{y=0\}$ to get the local model for $\M^\gamma$ of the form 
\best
\big\{(x,  t)\, \big|\, t= x(ax+ r(x))\big\}  \qquad\mbox{with $a\ne0$,  $\pi_\gamma(x,  t)=t$,}
\eest
and with $r(x)=O(x^2)$ for small $x$.  This becomes \eqref{KModel1} after reparameterizing $x$.

Similarly, in the second case,   take  $\gamma_B=\gamma$,  fix a direction $K_A\in T_J \J$ satisfying (\ref{type.AB.int}a), choose $S$  with $T_JS=\mbox{span}(K_A, K_B)$ containing $\gamma$, and again apply  Theorem~\ref{Kuranishi5.2}.  
\end{proof}

The distinction between the  local models \eqref{KModel1} and \eqref{KModel2B} is crucial.  By  Lemma~\ref{4.Jisol.e.0} below,   Model \eqref{KModel1} applies where  a generic path in $\J$ crosses a wall.  This is precisely the local model for the creation (if $a>0$) or annihilation  (if $a<0$) of a pair of curves in the moduli space.   Similarly,  Lemma~\ref{Lemma5.7} shows that Model \eqref{KModel2B} applies where  a generic path in the subspace $\J_C\subset \J$ crosses a wall.  But the model \eqref{KModel2B} is {\em not a  manifold}:  it is the  union of two curves crossing transversely at the origin.    It can be smoothed using the  parameter $s$ in  \eqref{KModel2}, as   will be done in Lemma~\ref{IsotopyLemma6.4}.

\vspace{3mm}

\begin{proof}[Proof of Theorem~\ref{Kuranishi5.2}] 
Specializing \eqref{D.slice.S} and \eqref{4.ETJ},  one sees that the slice over $S$ is  $Map^{1,r}(\Sigma, X)\times   {\mathcal S}_{\tau}  \times S$ and  that we can identify the first two factors with their tangent space at $p$, which is  $\E_p=\E^{1,r}_p$.   We can also trivialize  the bundle  $\F=\F^{0,r}$  over a neighborhood of $p$.    With these identifications,  $\M^S$ is the subset of $\E_p\times S$ that is  the zero set of the $\F_p$--valued  $C^{l-1}$ function $F$ defined by  \eqref{4.defF}.  This  has an expansion
\bear
\label{5.8*}
F(\xi)\ = \ \L_p(\xi) +Q(\xi),
\eear
where  $Q(\xi)$ vanishes to first order at $\xi=0$. Next,  fix  a generator 
$\kappa$ of $\ker D_p=\E^0$ and choose a decompositon $\E_p=\E^0\oplus \E^+$. 
Then   $\xi$ can be written  as  $\xi=(x, y, z, \alpha)$, for coordinates $(x,y,z)$ 
in a $3$-ball ${\mathcal B}  \subset   \E^0 \ti S$ around $0$ and 
$\alpha\in  \E^+$.   Using (\ref{4.formulabigL}), the linear term is
\begin{align*}
\L_p(\xi) & \ =\ xD_p\kappa +\tfrac12(yK_B+zK_A)f_*j +D_p\alpha   \\
& \ =\ z c_p +D_p\alpha,
\end{align*}
where we have set  $c_p=  \tfrac12 K_Af_*j$  and  noted that $K_B=\dot{\gamma}_B(0)$ is tangent to a Type~B path, so $K_B|_{f(C)}=0$.   Furthermore,  the transversality assumption (\ref{type.AB.int}a) ensures that $c_p\notin \im D_p$ as follows:  if $c_p=D_p\mu$,  then  $(-\mu, K_A)$ is an element of $\ker \L_p =T_p\M_{simple}$ with $d\pi(-\mu, K_A) = K_A$,  contradicting (\ref{type.AB.int}a).   Thus 
$\F_p$ decomposes as 
$$
\F_p= \widehat{\F}^0 \oplus \widehat\F^+
$$
where $\widehat\F^0$ is the real span of $c_p$ and $\widehat\F^+= D_p\E^+$. Using this decomposition, write $Q$ as $\left(Q_0\cdot c_p, Q_1 \right)$.

By Proposition~A.4.1 of \cite{ms} there is a bounded linear map $T:  \widehat\F^+\to \E^+$ that is a  pseudo-inverse of $D_p$,   which  implies that  $D_pT$ is the identity on $\widehat\F^+$.   Define a map 
$$
\eta:   \E^0 \ti S \ti  \E^+ \to  \E^0\ti S\ti \E^+ 
$$
by   
$$
 \eta(x, y, z, \al)=(x, y, z,  \eta_{x, y, z}(\al)), \qquad \mbox{where}\qquad   \eta_{x, y, z}(\al)\ =\ \al+ TQ_1(x, y, z, \alpha) \in \E^+.  
$$   
By the Inverse Function Theorem, $\eta$ is a local diffeomorphism near $0$.

Using the above notation, \eqref{5.8*} can be rewritten as the equation
 $$
 F(x,y,z,\alpha)\ =\ \big((z + Q_0)c_p,\,D_p(\eta_{x, y, z}(\alpha)) \big)\ \in\   \widehat\F^0 \oplus \widehat\F^+.
 $$
 This shows that $F(x,y,z,\alpha)=0$ if and only if  both  $\eta_{x,y,z}(\alpha)= TD_p(\eta_{x,y,z}(\al)) = 0$, and $z  = q$ where 
 $$
  q(x,y,z) = -Q_0\circ\eta^{-1}(x,y,z,0). 
 $$
  Thus there is a  local diffeomorphism  
$$
\M^S \cong\ \big\{(x,y,z)\in B\ \big|\ z=q(x,y,z)\big\}.
$$
The  real-valued function $q$ is smooth and vanishes   to first order at the origin, so we can solve for $z$ as a function  of $x$ and $y$ to obtain
\bear\label{4.eq.z}
z=z(x, y)= ax^2+ bxy+cy^2+ r_2(x, y),
\eear 
where the remainder $r_2$ vanishes to second order.  In particular, locally near $p$, $\M^S$ is a  2-manifold  with  coordinates $(x, y)$, in which the projection  \eqref{pi.to.S} is  
\best
\pi_S(x, y)=(y, z(x, y)),
\eest 
so $d\pi_S= (dy, z_x dx+ z_yd y)$.

Next observe that, since $\gamma_B$ is a Type B path, (\ref{type.AB.int}b) implies that $\W^1$ is transverse to $\M^S$ at $p$.  Thus $\W^1 \cap \M^S$ is a 1-dimensional manifold near $p$. On the other hand,  $\pi_S:\M^S\to S$ is a map between 2-manifolds, and the above formula shows that $\mbox{rank}\, d\pi_S \ge 1$.  The  proof of  Proposition~\ref{MmfdThm}a  then shows that the  set of critical points of $\pi_S$ is  contained in $\W^1\cap \M^S$.

 In coordinates, the intersection $\W^1\cap \M^S$ is locally the zero locus of the  function
\best
w(x, y)= z_x(x, y),
\eest
while $T_p(\W^1 \cap \M^S)=T_p \W^1\cap T_p \M^S$ is the kernel of the 1-form on $S$
\best
d w\ =\ z_{xx} dx+ z_{yx} dy\ =\   2adx+ bdy
\eest
at the origin.  Consequently, since $(\kappa, 0)= \partial_x$ and $(0,\dot \gamma_B(0))= \partial_y$, we have 
\begin{itemize}
\item $2a= dw(\kappa, 0) \not= 0$ by \eqref{4.defofAtwice} because $p\notin \A$ so 
$(\kappa, 0) \not \in T_p\W^1$. \vspace{2mm}
\item $b=  dw(0, K_B) \not= 0$ by (\ref{type.AB.int}b)  because  $\wt \gamma_B$ is   transverse to $\W^1$.\vspace{2mm}
\item $c =0$ because,  for small $t$,  the path  $\tilde{\gamma}_B(t)=(0,t,0)$ lies in $\M^S$, so  \eqref{4.eq.z} becomes $0=ct^2 +\O(t^3)$ for all small $t$.
\end{itemize}
In fact, the expansion in the third bullet point shows that  $r_2(0, t)= 0$ for all small $t$, which implies that \eqref{4.eq.z} has the form  
\best
z(x, y)= x( ax+by + r(x, y)), 
\eest
where $r$ vanishes to first order.
\end{proof}

\vspace{4mm}

 To apply Theorem~\ref{Kuranishi5.2} and its corollary, we will need several statements about generic paths in $\J$. These are  consequences of the Sard-Smale Theorem  (\cite{s}, \cite[Theorem A.5.1]{ms})  applied  in the following manner.  Suppose that $\pi:\M\to\J$ and $\rho:\N \to\M$  are Fredholm maps of separable $C^\ell$ Banach manifolds of index $\iota_\pi$ and $\iota_\rho$ respectively, and with $\ell \ge 2+\max(0, \iota_\pi, \iota_\pi+\iota_\rho)$.  Fix points  $J_0, J_1\in\J$, and let $\P=\P\J$ be the space of $C^k$,  $k\gg 1$,  paths $[0,1]\to\J$ from $J_0$ to $J_1$, which is a separable Banach manifold.

In this context, we have the following lemma. 
\begin{lemma} 
\label{L.Sard.smale}
Assume $J_0, J_1$ are regular values of $\pi$ and $J_0, J_1\not\in (\pi\circ\rho)(\N)$. Then there exists a Baire set $\P^*$ of $\P$ so that for each $\gamma\in \P^*$,  $\M^{\gamma}=\pi^{-1}(\gamma)$  is an  $\iota_\pi+1$ dimensional submanifold of $\M$ transverse to  $\rho$.
In particular, when $\iota_\rho+ \iota_\pi\le -2$, $\M^\gamma$ is disjoint from $\rho(\N)$.  
Consequently, for each subset  $S\subset \M$ of codimension $\ge 2$, there is a Baire set $\P^*_S$ so that for each $\gamma\in \P^*_S$,  $\M^{\gamma}$ is a manifold disjoint from $S$. 
\end{lemma}

\begin{proof} 
The evaluation map $\ev: \P \ti I \ra \J$ is a submersion of separable Banach manifolds 
 away from the boundary $\P \ti \bd I$, while the image $\ev(\P\ti \bd I)=\{ J_0, J_1\}$ of the boundary consists of regular values of $\pi$.   It follows that the map 
$$
\phi = \ev\ti \pi: \P \ti  I \ti \M\to \J\ti \J  
$$
  is transverse to the diagonal $\Delta_\J$, so the fiber product 
\best
 \wt \M\ =\ \phi^{-1}(\Delta_\J)\ =\ 
 \big\{(\ga, t, f) \, \big|\, f\in \M^{\gamma(t)}\big\} 
\eest 
is a separable Banach manifold  whose boundary is the fiber product of $\ev|_{\P \ti \bd I}$ and $\pi$.   Furthermore, the projection $\wt \pi:\wt \M \ra \P$ is a Fredholm map of index $\iota_\pi+1$, while the projection $p_3:\wt \M \ra \M$ onto the third factor is a submersion  away from the boundary:
\begin{gather}
\label{5.10}
\xymatrix{\wt \N\ar[r]^{\tilde{\rho}}\ar[d]& \wt \M \ar[d]^{p_3}\ar[r]\ar[d] &\P\ti I \ar[d]^{\ev} \ar[r] &\P
\\
\N\ar[r]^\rho&\M \ar[r]^\pi &\J.
}
\end{gather}
 But by assumption, $\pi\circ \rho (\N)$ is disjoint from the image $(\pi \circ p_3)(\bd \wt \M)=\{J_0,J_1\}$ of the boundary.  
Therefore  the fiber product $\wt\N$ of  $\rho$ and $p_3$ is a manifold and the index of $\tilde{\rho}$ is the index of $\rho$.

 By Sard-Smale Theorem applied to $\wt \pi: \wt \M\ra \P$, there is a Baire subset $\P_1$ of $\P$ such that each point $\gamma\in \P_1$ is a regular value of $\wt \pi$.  It is straightforward to check that $\gamma\in \P$ is a regular value of $\wt \pi$   if and only if the path $\gamma$  is transverse to $\pi:\M \ra \J$.  When $\gamma$ is a submanifold of $\J$, this latter transversality means that $\M^{\gamma}=\pi^{-1}(\gamma)$ is a submanifold of $\M$.

 Similarly,   again by the Sard-Smale Theorem, there is a Baire subset $\P_2$ of $\P$ such that each point $\gamma\in \P_2$ is a regular point of the composition $\wt \N \ra \P$ in the top row of \eqref{5.10},  and again
 this occurs  if and only if $\gamma$ is transverse to $\pi\circ \rho$.  (Note that $J_0, J_1$ are regular values of $\pi\circ \rho$ because they are not in the image of $\pi\circ \rho$.) When $\gamma$ is a submanifold,  this last transversality implies that $\N^\gamma=(\pi\circ \rho)^{-1}(\gamma)=\rho^{-1}(\M^\gamma)$ is a submanifold.

Finally, the  collection $\P_3$ of embedded paths is open and dense in $\P$.  Thus  $\P^*=\P_1\cap\P_2\cap \P_3$  is a  Baire set. For each $\gamma\in \P^*$,  $\M^{\gamma}$ is an $\iota_\pi+1$ dimensional manifold,  $\N^\gamma$ is an  $\iota_\pi+1+\iota_\rho$ dimensional manifold,  and is empty if $\iota_\pi+1+\iota_\rho<0$. The last statement of the lemma follows from Definition~\ref{4.defcodimkset} and the fact that every  countable intersection of Baire sets is a  Baire set.
\end{proof}

We will apply this reasoning twice:  first for paths in $\J= \J^l$, then for paths in $\J_C$.  Recall the notations $\J_E^*$ and $\J^E_{isol}$ from Section~1. For simplicity, we omit the $X$ from the notation $\M(X)$ of the moduli spaces in Lemmas~\ref{4.Jisol.e.0} and \ref{Lemma5.7}.

\medskip

 \begin{lemma}
 \label{4.Jisol.e.0}
 Any path in $\J$ with endpoints in $\J^*_E$ can be deformed, keeping its endpoints, to a path $\gamma$ such  that $\M^{\gamma, E}_{simple}$ is a  1-dimensional manifold, consisting of embeddings, and  intersecting the wall $\W$ transversely in isolated points, all in $\W^1\setminus \A$.  Moreover, any path $\gamma$ with these properties is in $\Ji^{E}$. 
 \end{lemma}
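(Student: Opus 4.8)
The plan is to apply Lemma~\ref{L.Sard.smale} to the universal moduli space of simple maps \eqref{4.M.simple}, using the structural results of Section~4, and then argue separately that the resulting generic path lies in $\Ji^E$. First I would recall that, by Lemma~\ref{Lemma1.1}, the endpoints $J_0, J_1 \in \J^*_E$ are regular values of the projection $\pi\colon \M^E_{simple}\to\J$ (so $\pi$ is a local diffeomorphism there), and that all simple $J_i$-holomorphic maps below energy $E$ are regular embeddings with disjoint images. In particular $J_0, J_1\notin\pi(\W)$ and $J_0, J_1\notin\pi(\A)$. Since $\pi$ restricted to $\M^E_{simple}$ is a Fredholm map of index $\iota=0$ (the index of $D_p$ is $0$, so $\L_p$ has index equal to $\dim\J$, and modding out gives a $0$-dimensional fiber), I would invoke Lemma~\ref{L.Sard.smale} three times in succession with $\N$ taken to be the various strata of the wall: $\W^r$ for $r\ge 2$, the bad locus $\A\subset\W^1$, and the complement $\M^E_{simple}\setminus\M^E_{emb}$ (which is empty near any point where the conclusion of Lemma~\ref{Lemma1.1}(a) holds, but one wants it away from the whole path). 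By Theorem~\ref{MmfdThm}(c), $\W^r$ has codimension $r^2\ge 4$ for $r\ge 2$, so for generic $\gamma$ the $1$-manifold $\M^{\gamma,E}_{simple}$ is disjoint from $\W^{\ge 2}$; by Lemma~\ref{Alemma}, $\A$ has codimension $1$ in $\W^1$, hence codimension $2$ in $\M^E_{simple}$, so again a generic $\gamma$ avoids $\A$; and $\M^{\gamma,E}_{simple}$ meets $\W^1\setminus\A$ transversely, necessarily in isolated points since $\dim\M^{\gamma,E}_{simple}=1$ and $\W^1$ has codimension~$1$. Taking the intersection of the three Baire sets (together with the open-dense set of embedded paths) and deforming $\gamma$ into it, keeping endpoints fixed, gives the first assertion; that $\M^{\gamma,E}_{simple}$ consists of embeddings follows because $\M^E_{emb}$ is open in $\M^E_{simple}$ and contains the endpoints' fibers, and because the wall crossings occur within $\M^E_{emb}$ — more carefully, one uses that the non-embedded locus also has positive codimension, so a generic path stays in $\M^E_{emb}$.

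For the second assertion — that any $\gamma$ with these properties already lies in $\Ji^E$, meaning $\gamma(t)\in\Ji^E$ for every $t\in[0,1]$ — I would argue as follows. Fix $t$ and set $J=\gamma(t)$. The conditions say that the fiber $\M^{J,E}_{simple}=(\pi|_{\M^{\gamma,E}_{simple}})^{-1}(t)$ consists of embeddings with, at each such point $p$, either $p\notin\W$ (so $D_p$ is onto and $\pi$ is a local diffeomorphism there by Theorem~\ref{MmfdThm}(b)) or $p\in\W^1\setminus\A$. In the first case $p$ is an isolated point of the fiber of $\pi$. In the second case, Corollary~\ref{Cor4.3}(a) applies: since the restriction of the moduli space over the path is transverse to $\W^1$, the path $\gamma$ is of Type~A at $p$ (the projection $\pi$ is transverse to $\gamma$ at $p$ — this is exactly the transversality of $\M^{\gamma,E}_{simple}$ to $\W$ translated through Theorem~\ref{MmfdThm}(b)), so $\M^\gamma$ is locally $\{(x,t)\mid t=ax^2\}$ with $a\ne 0$; the fiber over the value $t$ is then the single point $x=0$, again isolated. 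Hence in all cases the simple $J$-holomorphic maps below energy $E$ are isolated in the fiber. Combined with the fact that they are embeddings with disjoint images — which I would obtain by noting that this is an open condition (two embedded curves with disjoint images stay disjoint under small perturbation, by Gromov compactness as in Lemma~\ref{lemma2.1}), holds at the endpoints, and can only fail along a positive-codimension locus avoided by a generic path, so one folds "disjoint images" into the genericity conditions defining the deformed $\gamma$ — we conclude $J=\gamma(t)\in\Ji^E$ by Definition~\ref{1.defJisol}.

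The main obstacle I anticipate is the bookkeeping around the "disjoint images" clause and the passage from "simple maps are isolated in the smooth-domain moduli space $\M^{J,E}_{simple}$" to "isolated points in the Gromov topology on $\oM^{J,E}(X)_{simple}$" required by Definition~\ref{1.defJisol}. The Gromov topology allows sequences of embedded curves to converge to multiply-covered limits, and one must rule out a simple embedded curve in the fiber being a Gromov limit of other simple curves in the same fiber. This is handled by the transversality just established — near a point $p\in\W^1\setminus\A$ the local model \eqref{KModel1} has the single curve $x=0$ over the parameter value $t$, and near $p\notin\W$ the projection is a local diffeomorphism — together with Gromov compactness of $\C^{J,E}$, which prevents escape to infinity; the remaining subtlety is that one needs this for the whole fiber at once, i.e. finiteness of the number of wall-crossing points along $\gamma$, which follows because $\M^{\gamma,E}_{simple}$ is a compact (by properness of $\pi$ and Gromov compactness) $1$-manifold meeting the codimension-$1$ set $\W^1$ transversely, hence in finitely many points. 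A secondary technical point is verifying that the index of $\pi|_{\M^E_{simple}}$ is indeed $0$ so that Lemma~\ref{L.Sard.smale} gives a $1$-dimensional $\M^{\gamma,E}_{simple}$; this is immediate from \eqref{1.dimformula} with $c_1(X)=0$, $\dim X=6$, $n=0$.
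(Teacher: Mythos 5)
Your overall route coincides with the paper's: apply Lemma~\ref{L.Sard.smale} to $\pi:\M_{simple}\to\J$ with $\N=\W^r$ and $\N=\A$ (using Theorem~\ref{MmfdThm} and Lemma~\ref{Alemma} for the codimensions), and then obtain isolation of the fiber points from the local model \eqref{KModel1} at the wall crossings and from Theorem~\ref{MmfdThm}(b) away from the wall; that part is fine. The genuine gap is in how you force the generic path to consist of embeddings with disjoint images. You propose to feed $\N=\M^E_{simple}\setminus\M^E_{emb}$ into Lemma~\ref{L.Sard.smale}, and otherwise appeal to ``the non-embedded locus has positive codimension.'' But Lemma~\ref{L.Sard.smale} requires $\N$ to be a \emph{submanifold}, and the non-embedded locus of $\M_{simple}$ is not one (failure of injectivity or of immersivity occurs at varying, unrecorded points of the domain), so neither its codimension nor the transversality statement is even defined at that level. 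The paper's mechanism --- and the place where $\dim X=6$ actually enters --- is to pass to the marked moduli spaces $\M_{1,simple}$ and $\M_{2,simple}$ (with possibly disconnected domains, which is also what takes care of ``disjoint images'' of distinct curves): by Lemma~\ref{LemmaA.1}, the non-immersion locus $\mathcal S=\Phi^{-1}(0)\subset\M_{1,simple}$ and the double-point locus $\D=\ev^{-1}(\Delta)\subset\M_{2,simple}$ are codimension-$6$ submanifolds, while the projections of $\M_{1,simple}$ and $\M_{2,simple}$ to $\J$ have index $\iota=2$ and $\iota=4$; since $6>\iota+1$ in both cases, Lemma~\ref{L.Sard.smale} makes a generic path miss them. ``Positive codimension'' is not the right criterion precisely because adding marked points raises the index, so the quantitative bound $\mathrm{codim}\,\N>\iota+1$ is what must be checked, and your proposal never identifies this step.

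A secondary error: $\M^{\gamma,E}_{simple}$ is \emph{not} compact --- simple maps with smooth domain can Gromov-converge to multiply covered maps or to maps with nodal domains (this non-compactness is the very reason for the cluster formalism) --- so your claim of finitely many wall crossings ``by properness of $\pi$ and Gromov compactness'' is unjustified. Fortunately it is also unnecessary: the lemma asserts only that the wall crossings are isolated and that the points of each fiber $\M^{\gamma(t),E}_{simple}$ are isolated, and both follow pointwise from \eqref{KModel1} (each crossing is a non-degenerate critical point of $\pi_\gamma$) together with the local-diffeomorphism statement off the wall, exactly as in your first case analysis; no global finiteness is needed, and the paper claims none.
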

 
 \begin{proof} 
By  Proposition~\ref{5.1new},  $\M_{simple}$ is a  separable Banach  manifold  of class at least $C^2$, and by Corollary~\ref{corA.3}  the set  $\mathcal{NE}$ of all non-embedded simple maps  is a codimension 2 subset.  
 Next, restrict to $\M_{emb}$, noting that $\W^1$ is a submanifold of $\M_{emb}$ by  Proposition~\ref{MmfdThm}b, and that  $\A\subset \M_{emb}$ is a codimension~2  submanifold by Lemma~\ref{Alemma}.  Together with Lemma~\ref{lemma4.7}, this shows that   $\A\cup\mathcal{NE}\cup (\W\setminus \W^1)$ is a set of codimension 2 in $\M_{simple}$. 
   
Now apply Lemma~\ref{L.Sard.smale} with $\M=\M_{simple}$ and $\N$  equal to the disjoint union $\A\sqcup \mathcal{NE}\sqcup (\W\setminus \W^1)$,  noting that the index $\iota_\pi=0$   and $\iota_\rho \le -2$  in this case.
This  gives a  Baire subset $\P_1$ of $\P$  over which $\pi^{-1}(\gamma)$ is  a manifold of dimension $1$ that does not intersect $\N$.  Again apply  Lemma~\ref{L.Sard.smale}, now with $\M=\M_{emb}$ and  $\N$ equal to the  codimension~1 submanifold $\W^1$.  This gives a second  Baire subset  $\P_2$ of $\P$ over which $\pi^{-1}(\gamma)$ is  a manifold of dimension $1$ that is  transverse to $\N$.  

Consequently,  for each $\gamma$ in the Baire set ${\cal P}_1\cap {\cal P}_2$,  $\M^{\gamma, E}_{simple}$ is a 1-manifold (with boundary) transverse to $\W^1$, intersecting the wall $\W$ only along $\W^1 \setminus \A$, and consisting only of embedded curves. Note that the only critical points of $\pi_\gamma:\M^{\gamma, E}_{simple}\ra [0, 1]$ are these wall-crossing points.  The proof is completed by observing that the local model  (\ref{KModel1}) shows  that the wall crossing points are non-degenerate critical points of $\pi$ and therefore (a) they are isolated points of $\M^{\gamma, E}_{simple}$, and (b) all points of the fiber $\M^{\gamma(t), E}_{simple}$ are isolated for each $t$. 
 \end{proof}

 \begin{cor}
 \label{Cor5.6}
 $\J^E_{isol}$  is a dense and path-connected subspace of $\J$. 
 \end{cor}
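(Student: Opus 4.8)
Density is immediate: Corollary~\ref{Cor1.3} already shows that $\J^E_{isol}$ contains the dense Baire set $\J^*_E$. For path-connectedness the mechanism is Lemma~\ref{4.Jisol.e.0}. Since the space $\J$ of $\w$-tame almost complex structures is connected, any two points of $\J^E_{isol}$ can be joined by a path in $\J$, and the idea is to deform that path, via Lemma~\ref{4.Jisol.e.0}, into one lying entirely in $\J^E_{isol}$. The hypothesis that lemma needs (and the underlying Lemma~\ref{L.Sard.smale}) is only that the endpoints be \emph{good}: regular values of $\pi$ on $\M_{simple}$ and on the one- and two-marked-point moduli spaces, and outside the images $\pi(\A)$, $\pi(\mathcal S)$, $\pi(\D)$ of the branch, non-immersion and double-point loci. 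Every point of $\J^*_E$ is good, so all good points of $\J^E_{isol}$ lie in a single path component, and it remains only to join an arbitrary $J\in\J^E_{isol}$ to a good point by a path inside $\J^E_{isol}$.

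If the fiber $\M^{J,E}_{simple}$ misses the wall $\W$ then $J$ is a regular value of $\pi$, and a tiny generic perturbation stays in $\J^E_{isol}$ and lands at a good point: moving within the regular locus near an isolated embedding keeps it one, and leaving the meagre sets $\pi(\A)\cup\pi(\mathcal S)\cup\pi(\D)$ costs nothing. Otherwise the fiber meets $\W$ in at most countably many points, each --- by $J\in\J^E_{isol}$ (Definition~\ref{1.defJisol}) and the Kuranishi description near a wall point --- isolated in the fiber over $J$. Choose a direction $K\in T_J\J$ that is of Type~A at every such point at once; this is possible because each constraint $K\notin\im d\pi_p$ excludes only a closed hyperplane, so a generic $K$ works. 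Consider the short path starting at $J$ in the direction $K$. By Corollary~\ref{Cor4.3}(a) when $p\in\W^1\setminus\A$ (the saddle model of Theorem~\ref{Kuranishi5.2}), and by the analogous local normal forms at $\A$ and at the higher strata $\W^{r\ge 2}$, the moduli space over this path is, near each $p$, a parabolic or cubic arc of embeddings with disjoint images lying off $\W$ once the parameter is nonzero, while away from these points $\pi$ is a local diffeomorphism; a Gromov-compactness argument as in Lemma~\ref{lemma2.1} shows that for small parameter no new simple curves appear and each persisting one remains an isolated embedding with disjoint image. Hence the path lies in $\J^E_{isol}$, and for nonzero parameter (after a further tiny generic nudge) it reaches a good point.

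Assembling: given $J_0,J_1\in\J^E_{isol}$, perform the above at each end to reach good points $J_0'$, $J_1'$; join $J_0'$ to $J_1'$ inside $\J^E_{isol}$ by Lemma~\ref{4.Jisol.e.0}; and concatenate. The main obstacle is the local analysis at the endpoints: a point of $\J^E_{isol}$ need not be a regular value of $\pi$, so one must push it into the regular locus without leaving $\J^E_{isol}$, which requires --- besides the explicit saddle model of Theorem~\ref{Kuranishi5.2} valid on $\W^1\setminus\A$ --- the corresponding Kuranishi normal forms at $\A$ and at $\W^{r\ge 2}$, together with the Gromov-compactness control that no new wall curves appear under the perturbation. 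Everything else is a formal concatenation of the paths supplied by Lemma~\ref{4.Jisol.e.0}.
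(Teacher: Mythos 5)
Your core route is exactly the paper's: density is quoted from Corollary~\ref{Cor1.3}, and path-connectedness is obtained by joining two structures by a path in the connected space $\J$ and deforming it, rel endpoints, into $\Ji^E$ via Lemma~\ref{4.Jisol.e.0}. In fact those two sentences \emph{are} the paper's entire proof; the endpoint-regularization step that occupies most of your write-up does not appear there at all --- the lemma is simply invoked, with its hypothesis ``endpoints in $\J^*_E$'' left tacit.

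That said, the subtlety you isolate is real: membership in $\Ji^E$ (Definition~\ref{1.defJisol}) only requires the fiber to consist of isolated embeddings, not that $J$ be a regular value of $\pi$; the type-B wall-crossing moments appearing in Lemma~\ref{Lemma5.7} and Lemma~\ref{IsotopyLemma6.4} are points of $\J_C\cap\Ji^E$ lying under $\W^1$. So to connect an arbitrary point of $\Ji^E$, rather than a point of $\J^*_E$, to the rest, some bridge of the kind you describe is needed, and your plan is the natural one. But as written it is a sketch, not a proof. It leans on Kuranishi normal forms along $\A$ and along the higher strata $\W^{r\ge 2}$, which neither you nor the paper establishes (Theorem~\ref{Kuranishi5.2} covers only $\W^1\setminus\A$, and Lemma~\ref{Alemma} describes $\A$ but gives no local model for the moduli space over a path through it). The fiber over $J$ may contain infinitely many wall points accumulating in the rough topology, so having a local model with ``small parameter'' at each wall point does not by itself yield one short path that works simultaneously --- the admissible parameter size can degenerate along an accumulating sequence, and your Baire choice of a simultaneous Type-A direction $K$ does not address this uniformity. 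Finally, the assertion that for small nonzero parameter the perturbed structure lies in $\Ji^E$ --- i.e.\ that \emph{all} simple curves below energy $E$, including new ones born near multiple covers of the old fiber, are isolated embeddings with disjoint images, and that a further nudge lands on a regular value off the non-immersion and double-point loci without leaving $\Ji^E$ --- is exactly the content to be proved, and ``a Gromov-compactness argument as in Lemma~\ref{lemma2.1}'' is not carried out. In short: you have correctly reproduced the argument the paper gives and flagged a point it glosses over, but the patch you propose still needs the missing normal forms and a uniform, global genericity/compactness argument before it closes.
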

 \begin{proof}
Density was shown in Corollary~\ref{Cor1.3}.  Path-connectedness follows from 
Lemma~\ref{4.Jisol.e.0} and the fact that  $\J$ is path-connected. 
\end{proof}

We conclude this section by proving a version of Lemma~\ref{4.Jisol.e.0} for paths in the subspace $\J_C$ of $\J$.

 \begin{lemma}
 \label{Lemma5.7}
 Any path in $\J_C$ with endpoints in $\J^*_E$ can be deformed, keeping its endpoints, to a path $\gamma$ in $\J_C\cap \Ji^E$ whose  lift $\wt \ga$  intersects $\W$ transversally at finitely  many points,  all in $\W^1\setminus \A$.
\end{lemma}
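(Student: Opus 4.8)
The plan is to run the proof of Lemma~\ref{4.Jisol.e.0} almost verbatim, replacing the parameter space $\J$ by the subspace $\J_C$ (completed in the $C^\ell$ norm as in Section~\ref{subsection4.2}), and to deal with the one new feature: over $\J_C$ the universal moduli space of simple maps degenerates precisely along $\M_C=\{\iota_C\}\times\J_C$, since $\iota_C$ is $J$-holomorphic for every $J\in\J_C$. So the structure at and near $C$ must be read off from the Kuranishi model of Section~\ref{section5} rather than from a blanket transversality statement.

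First I would assemble the codimension data. By Theorem~\ref{MmfdThm}(d) and Lemma~\ref{Alemma}, $\M_C$ is transverse in $\M_{simple}$ to every $\W^r$ and to $\A$; identifying $\M_C$ with $\J_C$ via the canonical section, the sets $\W^r_C:=\M_C\cap\W^r$ and $\A_C:=\M_C\cap\A$ are submanifolds of $\J_C$ of codimension $r^2$ and $2$. On the other hand, for a simple map $f$ whose image is not $C$, unique continuation forces $f^{-1}(C)$ to have empty interior, so $f$ has injective points lying off $C$; since the $J$-variations used to prove Theorem~\ref{MmfdThm}(a),(c), Lemma~\ref{Alemma}, and the embeddedness and disjointness statements of Lemma~\ref{Lemma1.1}(a) are supported near such injective points, they all lie in $T_J\J_C$. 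Hence over $\J_C$, away from any neighbourhood of $\M_C$, the spaces $\M_{simple},\M_{1,simple},\M_{2,simple}$ are manifolds with the same Fredholm indices as before, $\W^r$ is codimension $r^2$, $\A$ is codimension $1$ in $\W^1$, and the non-immersion, double-point, and ``marked point on $C$'' loci have codimensions $6$, $6$, and $4$, exactly as in Section~\ref{section4} and the Appendix.

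Now I would apply Lemma~\ref{L.Sard.smale} over the path space $\P\J_C$ of paths from $J_0$ to $J_1$; its hypotheses hold because $J_0,J_1\in\J^*_E$, so below energy $E$ every simple map (in particular $\iota_C$) is a regular embedding at the endpoints and the images are pairwise disjoint, which keeps $J_0,J_1$ off all the relevant walls and exceptional loci. Applied to $\mathrm{id}\colon\M_C\to\J_C$ with $\N=\W^r_C$ $(r\ge1)$ and with $\N=\A_C$, it yields a Baire set of paths $\gamma$ whose lift $\wt\gamma$ meets $\W^1$ transversally — hence in finitely many isolated points, by compactness of $\wt\gamma$ — while staying disjoint from $\W^{\ge2}$ (codimension $\ge4$) and from $\A$ (codimension $2$); so every wall crossing of $\wt\gamma$ lies in $\W^1\setminus\A$. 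Applied again to $\M_{simple},\M_{1,simple},\M_{2,simple}$ over $\J_C$ against $\W^r$ and against the non-immersion, double-point, and ``marked point on $C$'' loci, it gives a further Baire set of paths along which every simple curve other than $C$ (near which the moduli space over $\J_C$ is a genuine manifold) sweeps out a $1$-manifold meeting $\W$ only in $\W^1\setminus\A$, is an embedding, and has image disjoint from $C$; pairwise disjointness of all the images is arranged exactly as in Lemma~\ref{Lemma1.1}(a). Intersecting the finitely many Baire sets produces $\gamma\subset\J_C$ with all these properties.

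The last step is to check $\gamma\in\Ji^E$, which is the only place $\M_C$ intervenes. Away from $\M_C$ the verification is identical to that in Lemma~\ref{4.Jisol.e.0}. At a crossing point $p=(\iota_C,\gamma(t_0))\in\W^1\setminus\A$, the path is of Type~B and transverse to $\W^1$ at $p$, so Corollary~\ref{Cor4.3}(b) (via Theorem~\ref{Kuranishi5.2}) applies: near $p$, $\M^{\gamma,E}_{simple}$ is the union of the line $x=0$, namely $C$ itself, and the curve $ax+bt+r(x,t)=0$ with $a,b\ne0$ — here $a\ne0$ because $p\notin\A$ and $b\ne0$ because of the Type~B transversality just arranged. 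Reading off fibers, the slice over each nearby parameter consists of the isolated point $C$ together with at most one further embedded curve $C'_t\to C$, which passes through $C$ as the parameter crosses $t_0$ and which the genericity above keeps disjoint from $C$ in $X$. Hence every fiber $\M^{\gamma(t),E}_{simple}$ consists of isolated embeddings with disjoint images, i.e. $\gamma(t)\in\Ji^E$ for all $t$. I expect this last step to be the main obstacle: one must confirm that the only degeneration of the universal moduli space over $\J_C$ is the tautological one along $\M_C$, and that the Kuranishi normal form of Theorem~\ref{Kuranishi5.2} suffices to control the fibers there — everything else is a routine transcription of Lemma~\ref{4.Jisol.e.0}.
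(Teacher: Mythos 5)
Your proposal is correct and follows essentially the same route as the paper's proof: transversality of $\M_C$ to each $\W^r$ and to $\A$ combined with the path-space Sard--Smale Lemma~\ref{L.Sard.smale} to control the lift $\wt\gamma$, variations supported near injective points mapping off $C$ (hence tangent to $\J_C$) to recover the generic structure of the remaining simple curves, and the local model \eqref{KModel2B} to see that the core stays isolated at the crossings. The one step the paper treats more carefully is your ``marked point on $C$'' locus: since that marked point maps \emph{into} $C$, transversality of the evaluation map cannot be obtained from variations supported near its image, and the paper handles exactly this by the trick of \cite[Lemma 3.4.4]{ms} -- a compensating variation of $J$ supported off $C$ -- which is worth adding to your argument but does not change the approach.
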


\begin{proof} 
 First consider  the subset $\M_C=\{\iota_C\}\ti \J_C$.    This  is  a submanifold of $\M_{emb}$ that is transverse to  $\W^1$  by  Proposition~\ref{MmfdThm}c,  and to $\A$  by   Lemma~\ref{Alemma}.  Furthermore, Lemma~\ref{lemma4.7} shows that 
 $(\W\setminus \W^1)\cap \M_C$ is a codimension~2 subset of $\M_C$.
   By  Lemma~\ref{L.Sard.smale}, there is a  Baire subset of paths $\gamma$  in $\J_C$ for which
the lift $\wt \ga=(\iota_C, \gamma)$ to $\M_{simple}$ intersects the  wall only along $\W^1\setminus \A$, and this intersection is a finite set of transverse points.   For each intersection point $\wt\gamma(t)\in \W^1\setminus \A$,   the local model \eqref{KModel2B}  implies that the core curve $\wt\ga(t)$ is an isolated point of $\M^{\gamma(t)}$ (and is clearly embedded).   The same conclusion is true
for those $t$ with $\wt\gamma(t)\not\in \W$  by  Proposition~\ref{MmfdThm}a.

It remains to find another  Baire subset of paths $\gamma$ for which the  points of $\M^{\gamma(t)}\setminus \M_C$ are embedded and isolated for each $t$. Denote by $\M_{simple}^*\ra \J_C$ the  moduli space of simple $J$-holomorphic maps $f$ that have at least one point $x_i$ on each component of their domain with $f(x_i)\in X\setminus C$.  The results of  Propositions~\ref{5.1new} and \ref{MmfdThm},  Lemmas~\ref{lemma4.7} and \ref{Alemma},  and Corollary~\ref{corA.3} all  extend to the moduli space $\M_{simple}^*\ra \J_C$ by using variations supported around the points $f(x_i)$, but vanishing along $C$; such variations are tangent to $\J_C$.   As in the proof of \cite[Lemma 3.4.3]{ms} a further variation, with  support off $C$, can be used to ensure that all curves in $\M^*_{simple}$ are transverse to $C$.  Again,  such variations are tangent to $\J_C$. 

With this understood, the proof of Lemma~\ref{4.Jisol.e.0} extends to give a Baire subset  $\P^*$ of the space $\P\J_C$ of paths in $\J_C$ so that for each $\gamma\in\P^*$, the points of $\M^{\gamma(t), E}_{simple}$  are embedded and isolated for all $t$.   
\end{proof}

 \vspace{5mm}
\setcounter{equation}{0}
\section{The cluster isotopy theorem}
\label{section7}
\bigskip

For notational simplicity, given two clusters $\O=(C,\ep, J)$ and $\O'=(C', \ep', J')$ whose core curves $C$ and $C'$ have the same genus and homology class, write 
$$
GW^E(\O)\approx GW^E(\O')
$$
to mean that  the difference is a finite sum of terms of the form $\pm GW^E(C_i, \ep_i, J_i)$ of strictly higher level (\ref{defLevel})  compared to that of $C$.   With this notation, for example, the conclusion of the cluster refinement Corollary~\ref{L.cluster.ref}  simply says that for generic $0<\ep'<\ep$ 
\bear
\label{5.1}
GW^E(C,\ep, J)\ \approx GW^E(C, \ep', J).
\eear

We now use the results of Sections~5  and 6   and an isotopy argument to prove that the GW series of every cluster is equivalent, in the above sense, to the series of an elementary cluster.  Recall that, for an elementary cluster $\O_{elem}$, $GW(\O_{elem})$ is the universal series 
\bear\label{7.A}
GW^{elem}_g(q^C, t)
\eear
given by  \eqref{3.defGelem} and \eqref{DefZq} with $q=q^C$.  In general,   we call a  cluster $(C, J, \ep)$  {\em regular} if the embedding $C\hookrightarrow X$ is a regular $J$-holomorphic map.

\begin{theorem}[Cluster Isotopy]
\label{CIT_6.1} 
For a regular cluster $\O=(C, J_0, \ep_0)$ centered at an embedded genus $g$ $J_0$-holomorphic curve $C$, 
\bear
\label{C.contrib}
GW^E(\O)\ \approx\  \sign(C, J_0)\, GW^{elem,E}_{g} (q^C, t),
\eear
 where $GW^{elem,E}_{g}$ is the truncation of \eqref{7.A} below energy $E$.
\end{theorem} 
\begin{proof} 
The proof of Proposition~\ref{3.Junhocurves} shows that there exist $J_1\in \J_C$  and $\ep_1$ so that $\O_{elem}=(C, \ep_1,  J_1)$ is an elementary cluster. In fact, we can assume that  $J_1\in \J_C\cap \J^*_E$ after a perturbation supported outside the $\ep_1/2$-neighborhood of $C$ of the type constructed  in  the proof of \cite[Lemma 3.4.3]{ms}.
Choose a path $\gamma(t)=J_t$ in $\J_C$ from $J_0$ to $J_1$ (the proof of Theorem~A.2 of \cite{IP2} shows that $\J_C$ is connected). By Lemma~\ref{Lemma5.7} we can assume, after a deformation, that $\gamma$ is a path in  $\J_C\cap \Ji^E$ and there is a finite set $Sing=\{t_i\}$, not containing 0 or 1,  such  that $\tilde{\ga}=(f_C, \gamma(t))$
\begin{itemize}
\item   lies in $\M^\ga\setminus \W$ for all $t\notin Sing$, and
 \item  lies in a  2-dimensional surface $V_i$ given by  \eqref{KModel2} for $t\in[t_i-\delta, t_i+\delta]$.
\end{itemize}
Choose $\de>0$ small enough so that the  intervals $[t_i-\delta, t_i+\delta]$ do not overlap, and let their endpoints be $0<\tau_1<\dots <\tau_{2k}<1$. For each $i$, fix a cluster $\O_i=(C, \ep_i, J_{\tau_i})$.   Then  $\wt\ga$ can be regarded as the composition of paths $\wt\ga_i:[\tau_i, \tau_{i+1}]\to \M^\ga$ of two types:
\begin{enumerate}[(i)]
\item  Paths in $\M_{emb}\setminus\W$.  For these,  Lemma~\ref{IsotopyLemma6.2} below shows that $GW^E(\O_i) \approx GW^E(\O_{i+1})$. 
\item  Paths in $\M_{emb}\cap V_i$, crossing  the wall transversally at a single point of $ \W^1\setminus \A$.  For these,  Lemma~\ref{IsotopyLemma6.4}   below shows that $GW^E(\O_i) \approx -GW^E(\O_{i+1})$.
\end{enumerate}
Altogether, we conclude that 
$$
GW^E(\O) \approx  (-1)^\sigma\, GW^E(\O_{elem}),
$$
where $\sigma$ is the number of transverse wall-crossings, which is exactly the spectral flow of the operator $D_p$ along the path $\tilde{\ga}$.  The path ends at an elementary cluster, which has positive sign by  \eqref{logZ}.   Thus $(-1)^\sigma$ is exactly the sign of the initial curve $(C, J_0)$.
 \end{proof}

\medskip

In the above proof, the assertion  in Step~(i) is a fact about isotopies with no wall-crossings.  It can be stated as follows.

\begin{lemma}[Simple Isotopy]
\label{IsotopyLemma6.2} 
Fix $E>0$.  Then for   any path   $(C_t, J_t)$ in  $\M_{emb}\setminus\W$ with $J_t$ in  $\J_{isol}^E$ and any $\ep_0, \ep_1$ such that $(C_0, \ep_0, J_0)$ and $(C_1, \ep_1, J_1)$ are clusters,  
\bear\label{GW.defm.J.0}
GW^E(C_0, \ep_0, J_0)\ \approx\  GW^E(C_1, \ep_1, J_1).
\eear 
\end{lemma}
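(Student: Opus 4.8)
The plan is to reduce to a statement local in the parameter $t$ and then chain the local results, using the cluster refinement relation \eqref{5.1} (Corollary~\ref{L.cluster.ref}) to reconcile different radii at the junction points. The one genuinely new ingredient is a uniform separation estimate, and it is in proving that estimate that the hypothesis $(C_t,J_t)\in\M_{emb}\setminus\W$ is used decisively. Note throughout that, since $\M_{emb}$ is a disjoint union over genus and homology class, $g(C_t)$ and $[C_t]$ are constant along the path, so all the $C_t$ have the same level $\ell_0:=\ell(C_0)$.

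\emph{Uniform separation.} The first step is to show there is a $\rho_0>0$ such that for every $t\in[0,1]$ and every embedded $J_t$-holomorphic curve $C'$ with energy $\le E$ and $\ell(C')\le\ell_0$, either $C'=C_t$ or $d_H(C',C_t)\ge\rho_0$. I would argue by contradiction: a failure produces $t_n\to t_\infty$ and curves $C'_n\ne C_{t_n}$ with $d_H(C'_n,C_{t_n})\to 0$, so $C'_n\to C_{t_\infty}$ in $\C(X)$. Since energies are bounded, after passing to a subsequence $[C'_n]$, $g(C'_n)$, and hence $\ell(C'_n)$, are constant; Gromov compactness together with Lemma~\ref{Lemma1.4} then identifies the limit of the $C'_n$ as a degree $k\ge 1$ cover of $\iota_{C_{t_\infty}}$, possibly with ghost components, so $[C'_n]=k[C_0]$. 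When $k\ge 2$, one has $\deg[C'_n]=k\deg[C_0]$, and using Riemann--Hurwitz (when $g(C_0)\ge 1$) one gets $\ell(C'_n)>\ell_0$, contradicting $\ell(C'_n)\le\ell_0$. When $k=1$, the inequality $\ell(C'_n)\le\ell_0$ forces the arithmetic genus of the limit to equal $g(C_0)$, hence there is no bubbling and $(C'_n,J_{t_n})\to(C_{t_\infty},J_{t_\infty})$ in $\M_{emb}$; but $(C_{t_\infty},J_{t_\infty})\notin\W$, so $\pi$ is locally injective there by Theorem~\ref{MmfdThm}b), and since $(C_{t_n},J_{t_n})$ also converges to that point with the same image under $\pi$, we get $C'_n=C_{t_n}$ for large $n$, again a contradiction. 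I expect this step --- in particular the bookkeeping of degrees and genera under Gromov degeneration --- to be the main obstacle.

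\emph{The local step.} Fix $\tau\in[0,1]$. Using Lemma~\ref{L.cluster.exists} I would choose $\ep^*\in(0,\rho_0/4)$ so small that $(C_\tau,\ep',J_\tau)$ satisfies conditions (a) and (b) of Definition~\ref{D.cluster} for all $\ep'\le 2\ep^*$, and with $\partial B(C_\tau,2\ep^*)\cap\C^{J_\tau,E}=\emptyset$; set $W=B(C_\tau,2\ep^*)\subset\C(X)$. By the Gromov--compactness argument in the proof of Lemma~\ref{lemma2.1}, the set of $J$ with $\partial W\cap\C^{J,E}=\emptyset$ is open in the $C^0$ topology, so there is $\delta_\tau>0$ with $\partial W\cap\C^{J_t,E}=\emptyset$ and $d_H(C_t,C_\tau)<\ep^*$ for $|t-\tau|<\delta_\tau$; on this interval $GW^E(W,J_t)$ is defined and, by Lemma~\ref{lemma2.1}, locally constant, hence constant. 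For such a $t$ and any cluster radius $\ep_t\le\ep^*$, the triangle inequality gives $B(C_t,\ep_t)\subseteq W$; since no curves of $\C^{J_t,E}$ lie on $\partial W$ or on the sphere of radius $\ep_t$ about $C_t$, the set $W\cap\C^{J_t,E}$ splits into the part in $B(C_t,\ep_t)$ and the part outside $\overline{B(C_t,\ep_t)}$, and Proposition~\ref{P.cluster.decomp.exist} gives a finite cluster decomposition $\{\O_{t,i}\}$ of the latter with
\[
GW^E(W,J_t)\ =\ GW^E(C_t,\ep_t,J_t)\ +\ \sum_i GW^E(\O_{t,i}).
\]
Each core $C_{t,i}$ satisfies $C_{t,i}\ne C_t$ and $d_H(C_{t,i},C_t)<3\ep^*<\rho_0$, so $\ell(C_{t,i})>\ell_0=\ell(C_t)$ by the uniform separation estimate; that is, every $\O_{t,i}$ is strictly higher level. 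Therefore $GW^E(C_t,\ep_t,J_t)\approx GW^E(W,J_t)$, and since the right-hand side is constant for $|t-\tau|<\delta_\tau$, it follows that $GW^E(C_s,\ep_s,J_s)\approx GW^E(C_t,\ep_t,J_t)$ whenever $s,t$ lie in this interval and $\ep_s,\ep_t\le\ep^*$.

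\emph{Conclusion.} Finally I would cover $[0,1]$ by finitely many such intervals, pick sample points $0=u_0<\dots<u_N=1$ with each consecutive pair in a common interval and a cluster radius at each $u_k$ below the relevant bounds, apply the local step to each consecutive pair, and use \eqref{5.1} to relate the given clusters $(C_0,\ep_0,J_0)$ and $(C_1,\ep_1,J_1)$ to the small-radius clusters at $u_0$ and $u_N$. Composing this finite chain of $\approx$-relations --- all of which are with respect to the common genus and class of the $C_t$ --- yields \eqref{GW.defm.J.0}.
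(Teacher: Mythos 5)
Your proof is correct and follows essentially the same route as the paper's: a uniform separation radius obtained from compactness of $[0,1]$ and Theorem~\ref{MmfdThm}(b), local constancy of the contribution of a fixed Hausdorff ball via Lemma~\ref{lemma2.1}, a finite subcover, and cluster decomposition/refinement to absorb all discrepancies into strictly higher-level cluster contributions. The only difference is one of detail: you prove the separation statement (and hence that every correction cluster has level $>\ell_0$) explicitly via a Gromov-compactness contradiction argument, whereas the paper asserts this directly from Theorem~\ref{MmfdThm}(b) and compactness.
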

\begin{proof} 
It follows from  Proposition~\ref{MmfdThm}a and the compactness of $[0,1]$ that there is a $\delta>0$ such that, for each $t\in [0,1]$,  $C_t$ is the only $J_t$-holomorphic curve in its degree and genus in the ball $B(C_t, \de)$ (in Hausdorff distance).  By Lemma~\ref{L.cluster.exists}  we can choose, for each $0\le t\le 1$, an $0<\ep_t<\delta $ such that $(C_t, \ep_t, J_t)$ is a cluster.  Then, by Lemma~\ref{lemma2.1}, $(C_s, \ep_t, J_s)$ has a well-defined contribution $GW^E (C_s, \ep_t, J_s)$ for all $s$ in an open interval around $t$.  These open intervals cover $[0,1]$; take a finite subcover $\{I_k\}$.  Then $GW^E(C_s,\ep_k, J_s)$ is constant for $s$ in each $I_k$ and $C_s$ is the only $J_s$-holomorphic curve in its genus and homology class in that ball.   Corollary~\ref{L.cluster.ref}  shows that on the intersection of two consecutive intervals the corresponding $GW^E$ invariants differ by the contributions of higher-level clusters. The lemma follows.
\end{proof}

\medskip

By Lemma~\ref{4.Jisol.e.0}, each path in $\J$ with endpoints in $\J^*_E$ can be deformed, keeping its endpoints, to a path $\gamma$ in $\Ji^E$ such that the projection
\bear\label{5.eq.pi}
\pi_\gamma:  \M^\gamma_{emb} \to \ga
\eear
has only non-degenerate critical points, none an endpoint, each locally modeled by  \eqref{KModel1}.   If $a>0$ in the local model,  then $\gamma$ can be  parameterized so that  $\gamma(t)=t$ and $\pi^{-1}(t)=\{x\,|\, t=ax^2\}$ is empty for $t<0$ and is two distinct curves $C_t^\pm$ for $0<t<\delta$  (and vice versa   if $a<0$).  A second isotopy lemma relates the GW invariants of clusters centered on these curves $C_t^\pm$.

\begin{lemma}[Wall-crossing in $\J$]
\label{IsotopyLemma6.3}
Fix $E>0$, a path $\ga$  in $\J_{isol}^E$ and  a non-degenerate critical point $(C_0, J_0)$ of \eqref{5.eq.pi} for $J_0=\ga(0)$. 
Then there exists a $\de>0$ and a neighborhood $U$ of $(C_0,J_0)$ in $\M$ such that if $0\not= |t|<\delta$ and the sign of $t$ is such that $\M^{\ga(t)}\cap U=\{C^\pm_t\}$, then the  two clusters $\O^+=(C_t^+, \ep, J_t)$, $\O^-=(C_t^-, \ep', J_t)$ satisfy 
$$
GW^E(\O^+)\ \approx\ -GW^E(\O^-).
$$
 \end{lemma}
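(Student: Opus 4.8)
Here is how I would prove Lemma~\ref{IsotopyLemma6.3}.

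The plan is to realise both $GW^E(\O^+)$ and $GW^E(\O^-)$ as the bottom-level part of the contribution of a single fixed open set of $\C(X)$, and then to compare the two sides of the wall using the local constancy of such contributions (Lemma~\ref{lemma2.1}): on the side of the wall where no curve is present, the same open set carries no bottom-level contribution, which forces the two bottom-level pieces to cancel modulo higher level. In other words, the target is $GW^E(\O^+)+GW^E(\O^-)\approx 0$, which is exactly $GW^E(\O^+)\approx -GW^E(\O^-)$.

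First I would apply Corollary~\ref{Cor4.3}(a) at $p=(C_0,J_0)$ (which lies in $\W^1\setminus\A$ by the hypotheses set up before the lemma): after shrinking the neighbourhood $U$ of $p$ in $\M$, the set $\M^{\gamma}_{emb}\cap U$ is the local model $\{t=ax^2\}$ of \eqref{KModel1} with $a\neq 0$; say $a>0$ (the case $a<0$ is symmetric under $t\leftrightarrow -t$). Then for $0<t<\delta_0$ the fibre $\M^{\gamma(t)}_{emb}\cap U$ is exactly $\{C^+_t,C^-_t\}$, it is empty for $-\delta_0<t<0$, and $C^\pm_t\to C_0$ in $\M$, hence in the Hausdorff metric, as $t\to 0^+$; since homology class and genus are locally constant on $\M$, both $C^\pm_t$ represent $[C_0]$ and have genus $g_0:=g(C_0)$, so $\ell(C^\pm_t)=\ell(C_0)=:\ell_0$ in the sense of \eqref{defLevel}. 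Next I would fix a small radius $\rho>0$. Since $J_0\in\Ji^E$, the curve $C_0$ is isolated among $J_0$-holomorphic curves of its degree and genus, and by Lemma~\ref{Lemma1.4}(b) no simple $J_0$-holomorphic curve of level $<\ell_0$ accumulates at $C_0$; together with the compactness and countability of $\C^{J_0,E}$ (Lemma~\ref{Lemma1.5}) this lets me choose $\rho$ so small that $\overline{B(C_0,\rho)}$ contains no $J_0$-holomorphic curve of level $\le\ell_0$ other than $C_0$, and that no $J_0$-holomorphic curve of energy $\le E$ lies at Hausdorff distance exactly $\rho$ from $C_0$ (the bad radii form a countable set). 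The last condition says $\partial B(C_0,\rho)\cap\C^{J_0,E}=\emptyset$, so by the openness part of Lemma~\ref{lemma2.1} there is $\delta\in(0,\delta_0)$ with $\partial B(C_0,\rho)\cap\C^{J_s,E}=\emptyset$ for all $|s|\le\delta$, and hence $U':=B(C_0,\rho)$ carries a contribution $GW^E(U',J_s)$ that is constant for $|s|<\delta$.

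The key step is a Gromov-compactness argument: after shrinking $\delta$, for every $|s|<\delta$ the only embedded $J_s$-holomorphic curves of level $\le\ell_0$ in $B(C_0,\rho)$ are $C^+_s,C^-_s$ when $s>0$ and none when $s<0$. Indeed, an escaping sequence of such curves for some $s_k\to 0$ would, after passing to a subsequence of fixed homology class and genus, Gromov-converge to a $J_0$-holomorphic stable map supported in $\overline{B(C_0,\rho)}$; its underlying reduced curve (Lemma~\ref{Lemma1.4}(a)) is then a simple $J_0$-holomorphic curve in $\overline{B(C_0,\rho)}$ of level $\le\ell_0$ (the level filtration being respected under passage to the reduced curve), hence equal to $C_0$ by the choice of $\rho$; but then the approximating curves converge to $p$ in $\M$, so they eventually lie in the local-model region $U$ and are among the $C^\pm_{s_k}$, a contradiction. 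Granting this, a cluster decomposition (Proposition~\ref{P.cluster.decomp.exist}) of $B(C_0,\rho)\cap\C^{J_s,E}$ has all of its clusters cored at curves of level $>\ell_0$, with the sole exception of the two clusters cored at $C^+_s$ and $C^-_s$ when $s>0$ (each $C^\pm_s$, being a curve of level $\le\ell_0$, must be the core of the cluster-ball containing it, since within a cluster the core is the strictly minimal-level curve).

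To conclude I would fix $0<t<\delta$, with $U$ as above so that $\M^{\gamma(t)}\cap U=\{C^\pm_t\}$, and any $-\delta<t'<0$. Shrinking $\ep,\ep'$ so that $B(C^+_t,\ep)$ and $B(C^-_t,\ep')$ are disjoint admissible cluster radii contained in $B(C_0,\rho)$ --- harmless for the $\approx$-class by cluster refinement (Corollary~\ref{L.cluster.ref}) and the density of admissible radii (Lemma~\ref{L.cluster.exists}) --- the previous paragraph gives $GW^E(U',J_t)\approx GW^E(\O^+)+GW^E(\O^-)$, while $GW^E(U',J_{t'})$ is a finite sum of cluster contributions of level $>\ell_0$. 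Since $GW^E(U',\cdot)$ is constant on $(-\delta,\delta)$, subtracting yields $GW^E(\O^+)+GW^E(\O^-)\approx 0$, i.e. $GW^E(\O^+)\approx -GW^E(\O^-)$. (This is consistent with the signs: the branches $x>0$ and $x<0$ of the local model are separated by the wall point $C_0\in\W^1$, so $\sign(C^+_t,J_t)=-\sign(C^-_t,J_t)$.) The main obstacle I expect is precisely the Gromov-compactness step of the third paragraph --- ensuring, uniformly as $s\to 0$, that no spurious curve of level $\le\ell_0$ enters the fixed ball $B(C_0,\rho)$, which is what makes a single $\rho$ (independent of $s$) do its job and what forces the ``$\approx$'' rather than an equality; everything else is quoted directly from Corollary~\ref{Cor4.3}, Lemmas~\ref{Lemma1.4}--\ref{Lemma1.5} and \ref{lemma2.1}, and Proposition~\ref{P.cluster.decomp.exist}.
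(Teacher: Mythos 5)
Your proposal is correct and follows essentially the same route as the paper's proof: fix a Hausdorff ball around $C_0$ whose boundary misses $\C^{J_s,E}$, use the local constancy of $GW^E(U,J_s)$ from Lemma~\ref{lemma2.1}, cluster-decompose via Proposition~\ref{P.cluster.decomp.exist} on the two sides of the wall (getting $GW^E(\O^+)+GW^E(\O^-)$ on one side and $\approx 0$ on the other), and absorb the choices of $\ep,\ep'$ by cluster refinement. The only difference is that you spell out the Gromov-compactness argument guaranteeing that no spurious low-level curves enter the fixed ball uniformly as $s\to 0$, a point the paper asserts more tersely from the local model \eqref{KModel1} together with $J_0\in\J^E_{isol}$.
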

\pf 
The local model  (\ref{KModel1}) at $(C_0, J_0)$ implies that there is an $\ep_1>0$  and a ball $U=B(C_0, \ep_1)$ in $\C(X)$ that contains $C^\pm_t$ and no other $J_t$ holomorphic curves in the degree and genus of $C_0$ for all $|t|<\ep_1$. Because $J_t\in \J_{isol}^E$, Lemma~\ref{L.cluster.exists} ensures that $\ep_1$ can be chosen so that  $(C_0, \ep_1, J_0)$ is a cluster.   
\begin{wrapfigure}[8]{r}{0.4\textwidth}
\labellist
\pinlabel $\O^-$ at 10 35   
\pinlabel $\O^+$ at  10 75 
\pinlabel $\M^\gamma\cap\W^1$ at 108 69
\pinlabel $\pi$ at 78 26  
\pinlabel $\J$ at 106 9 
\endlabellist
\centering
  \includegraphics[scale=1]{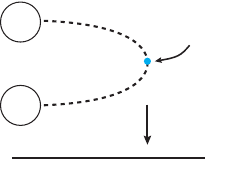}
\end{wrapfigure}
As $J$  varies, the associated invariant    $GW^E(U, J_s)$ is, by Lemma~\ref{lemma2.1},  well-defined and independent of $s$  for  small $s$. 

The local model \eqref{KModel1} shows that $U\cap\C^{J_s, E}$ is $\{C^\pm_t\}$  for $s=t$, and is empty for $s=-t$. Taking $s=t$ and applying  Proposition~\ref{P.cluster.decomp.exist},  one sees that $U$ decomposes into two clusters $\O^\pm_t$ with $GW^E(\O^+) +  GW^E(\O^-) \approx GW^E(U, J_s)$.  
Applying the same theorem with $s=-t$ shows that $GW^E(U, J_s)\approx 0$.
The proof is completed by noting that the invariants  $GW^E(\O^\pm)$ satisfy (\ref{5.1}) as $\ep$ and $\ep'$ vary.
\qed

 \bigskip
 
The core curve of a cluster does not persist through the wall-crossing described by Lemma~\ref{IsotopyLemma6.3}.   But the core curve remains if we fix the complex structure on the core curve $C$ and cross the wall along a path $\ga$ in $\J_C$, as was done in the proof of Theorem~\ref{CIT_6.1}.  After a   perturbation as in Lemma~\ref{Lemma5.7},  the wall-crossing is locally modeled  by  \eqref{KModel2B}.  In this picture, for each $0<t<\delta$, there are four curves to consider: 
 the incoming core curve $(C, J_{-t})$, the outgoing core curve $(C, J_t)$, and a second pair of curves  $(C'_{-t}, J_{-t})$ and $(C'_t, J_t)$.

\begin{lemma}[Wall-crossing in $\J_C$]
\label{IsotopyLemma6.4}
Fix $E>0$ and a path $\ga$  in $\J_C\cap \J_{isol}^E$ so that $\wt\ga(t)$ crosses the wall transversally at $t=0$ at  a point $(C, J_0)$ in $\W^1\setminus \A$. 
Then there exists a $\de>0$ so that each incoming cluster $\O_{-\delta}=(C, \ep, J_{-\delta})$ and each outgoing cluster $\O_{\delta}=(C, \ep', J_\delta)$ satisfy
$$
GW^E(\O_{-\delta})\ \approx\ -GW^E(\O_{\delta}).
$$
 \end{lemma}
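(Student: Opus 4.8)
The plan is to reduce the wall‑crossing in $\J_C$ to the local‑constancy principle of Lemma~\ref{lemma2.1}, using the Kuranishi model of Corollary~\ref{Cor4.3}(b) to "smooth" the crossing through the auxiliary parameter $s$. First I would record the local picture: by Corollary~\ref{Cor4.3}(b) (that is, Theorem~\ref{Kuranishi5.2}) there is a disk $S=\{(t,s)\}\subset\J$ with $\gamma$ as its $t$-axis, a ball $B\subset\M_{emb}$ about $p=(C,J_0)$, and coordinates $(x,t,s)$ in which $\M^S\cap B=V=\{(x,t,s):s=x(ax+bt+r(x,t))\}$ with $a,b\neq0$ and $\pi_S(x,t,s)=(t,s)$. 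Here the core curve $C$ is the branch $\{x=0\}$ (the lift $\wt\gamma$), and $\{ax+bt+r=0\}$ is a family $\{C'_t\}$ of embedded curves with $C'_0=C$ and $C'_t\neq C$ for $t\neq0$. The critical locus $\W^1\cap V$ of $\pi_S|_V$ is $\{2ax+bt+\cdots=0\}$, whose image in $S$ is the caustic $\{s=-b^2t^2/(4a)+O(t^3)\}$; replacing $s$ by $-s$ (and $K_A$ by $-K_A$) if necessary, I may assume the caustic lies in $\{s\le0\}$, so that $V\cap\{s>0\}$ is disjoint from $\W$. A short look at the defining equation shows that $V\cap\{s>0\}$ has two sheets, and tracing them to $s=0$ one sheet joins $(C,J_{-\delta})$ to $(C'_\delta,J_\delta)$ while the other joins $(C'_{-\delta},J_{-\delta})$ to $(C,J_\delta)$; this "swap" of branches is the geometric origin of the sign.

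Next I would set up an ambient ball. Using Gromov compactness and $J_0\in\J^E_{isol}$, choose a Hausdorff ball $U=B(C,\ep_1)\subset\C(X)$ small enough that $(C,\ep_1,J_0)$ is a cluster and, after shrinking $S$ and then $\delta>0$, for every $(t,s)\in S$ one has $\bd U\cap\C^{J_{(t,s)},E}=\emptyset$ and the $J_{(t,s)}$-holomorphic curves in $U$ in the degree and genus of $C$ are \emph{exactly} the points of $V$ over $(t,s)$ (every other curve in $U$ then has strictly higher level \eqref{defLevel}, by the cluster conditions). Shrink $\delta$ further so that $C'_{\pm\delta}\in U$. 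By Lemma~\ref{lemma2.1} the function $\mathbf G:=GW^E(U,\,\cdot\,)$ is defined and constant on a connected open set $W$ containing $\gamma([-\delta,\delta])$ together with a subdisk of $S$ about $J_0$.

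Now the bookkeeping. Over $(0,s_*)$ with $as_*<0$ the set $V$ is empty near $x=0$, so by the previous paragraph no curves in $U$ have the degree and genus of $C$ at $J_{(0,s_*)}$; this is an open condition, so (density of $\J^E_{isol}$, Corollary~\ref{Cor1.3}) there is $J_*\in\J^E_{isol}\cap W$ near $J_{(0,s_*)}$ with the same property. Decomposing $U$ into clusters at $J_*$ by Proposition~\ref{P.cluster.decomp.exist}, every core has level $>\ell(C)$, so $\mathbf G=GW^E(U,J_*)\approx0$. Decomposing $U$ instead at $J_\delta\in\J^E_{isol}$, the only level-$\ell(C)$ curves in $U$ are $C$ and $C'_\delta$, so with the refinement Corollary~\ref{L.cluster.ref} one gets $\mathbf G\approx GW^E(\O_\delta)+GW^E(C'_\delta,\ep'',J_\delta)$ for some cluster $(C'_\delta,\ep'',J_\delta)$; combined with $\mathbf G\approx0$ this gives $GW^E(\O_\delta)\approx-GW^E(C'_\delta,\ep'',J_\delta)$. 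Finally, the sheet of $V\cap\{s>0\}$ running from $(C,J_{-\delta})$ to $(C'_\delta,J_\delta)$ is a path in $\M_{emb}\setminus\W$, and the simple‑isotopy argument of Lemma~\ref{IsotopyLemma6.2} along it yields $GW^E(\O_{-\delta})\approx GW^E(C'_\delta,\ep'',J_\delta)$ (using refinement to absorb the difference in cluster radii). Putting the two together, $GW^E(\O_{-\delta})\approx-GW^E(\O_\delta)$, which is the assertion.

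The step I expect to be the main obstacle is the last one: the path along the $s>0$ sheet leaves $\J^E_{isol}$ in its interior, so Lemma~\ref{IsotopyLemma6.2} does not apply verbatim. The fix is that the path stays inside $U$, where the core curves are regular embeddings isolated in their degree and genus (Theorem~\ref{MmfdThm}(b)) and where clustering onto them occurs only through covers and higher‑genus degenerations (the rescaling/monotonicity argument of \cite{IP2}); hence the conditions of Definition~\ref{D.cluster} can be verified along the whole path and the proof of Lemma~\ref{IsotopyLemma6.2} carries over. Alternatively, one first pushes $\gamma$ off the wall into $V\cap\{s>0\}$ and then deforms the resulting path into $\J^E_{isol}$ by Lemma~\ref{4.Jisol.e.0}; since $V\cap\{s>0\}$ is disjoint from $\W$, the relevant arc still realizes the swap with no wall crossing. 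The remaining work — the Gromov‑compactness construction of $U$ and of the point $J_*$, and keeping track of which cluster in each decomposition is which — is routine but needs to be written carefully.
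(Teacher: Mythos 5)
Your argument is correct, and it follows the paper's proof quite closely: same Kuranishi model from Theorem~\ref{Kuranishi5.2}/Corollary~\ref{Cor4.3}(b), same deformation off the $s=0$ slice, the same use of the $s>0$ sheet (which misses $\W$) together with Lemma~\ref{IsotopyLemma6.2} after a perturbation into $\Ji^E$ via Lemma~\ref{4.Jisol.e.0} --- your ``alternative'' fix for the flagged obstacle is exactly what the paper does, and is the right one; your first suggested fix (re-proving Lemma~\ref{IsotopyLemma6.2} inside $U$ via a rescaling argument from \cite{IP2}) is unjustified as stated but also unnecessary. The one genuine structural difference is how you extract the sign. The paper introduces the auxiliary cluster $D_s$ at the perturbed second curve $C'_{-\de}$, uses the constancy $GW^E(D_s)=GW^E(D_{-s})$, and obtains the sign by applying Lemma~\ref{IsotopyLemma6.3} along the $s<0$ half-path, where the two branches annihilate at a fold point. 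You instead never invoke Lemma~\ref{IsotopyLemma6.3}: you fix the single Hausdorff ball $U=B(C,\ep_1)$, use Lemma~\ref{lemma2.1} to make $GW^E(U,\cdot)$ constant over the whole disk $S$, evaluate it in the curve-free chamber $\{as<0,\ t=0\}$ (giving $\approx 0$ after perturbing into $\Ji^E$ and decomposing by Proposition~\ref{P.cluster.decomp.exist}) and at $J_\de$ (giving $\approx GW^E(\O_\de)+GW^E(C'_\de,\ep'',J_\de)$ via Proposition~\ref{P.cluster.decomp.exist} and Corollary~\ref{L.cluster.ref}). This is essentially the proof of Lemma~\ref{IsotopyLemma6.3} inlined with the ball re-centered at $C$ and the two-parameter family used to reach the empty chamber; what it buys is that you avoid locating the fold point and the auxiliary cluster $D_s$, at the cost of needing slightly stronger bookkeeping: constancy of $GW^E(U,\cdot)$ over all of $S$, persistence of the cluster conditions (a),(b) on $U$ for all $(t,s)\in S$ and for the off-$S$ perturbation $J_*$, and the statement that \emph{all} curves in $U$ of the degree and genus of $C$ are captured by the local model $V$ --- all provable by the same Gromov-compactness/openness arguments the paper uses in Lemmas~\ref{lemma2.1} and \ref{IsotopyLemma6.3}, and which, as you note, must be written out carefully.
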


\begin{proof} Consider the local model $\M^S\to S$ given by (\ref{KModel2}). Its restriction over $\gamma$, given by  (\ref{KModel2B}), 
is two curves crossing at the origin.  We will perturb this level set $\{ z=0\}$ in two opposite directions.  

\addtocounter{figure}{1}
\begin{figure}[ht!]
\labellist
\small\hair 2pt
\pinlabel {\bf $Wall$} at  137 34
\pinlabel $A_s$ at   10.8 41
\pinlabel $B_s$ at  135 56
\pinlabel $D_s$ at   29.8 84.5
\pinlabel $\pi$ at   82 26
\pinlabel ${\mathcal J}$ at   128 5
\pinlabel $A_s$ at    241 54
\pinlabel $B_s$ at  366.1 42
\pinlabel $D_s$ at   240.8 74.8
\pinlabel ${\mathcal J}$ at  366 5
\pinlabel $\pi$ at   313 26
\endlabellist
\centering
\includegraphics[scale=1.1]{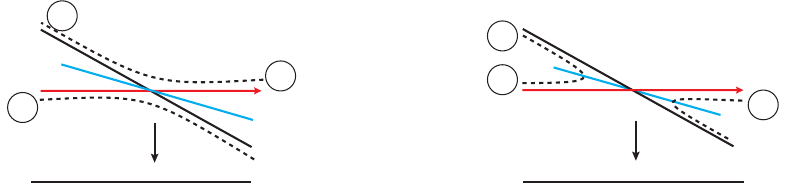}
\caption{These figures show the curve $C$ (horizontal line) and curves $C'_t$ (diagonal line) as $t$ (the horizontal coordinate) varies.  The circled labels refer to clusters at the ends of the dotted paths, with $s>0$ in the first figure, $s<0$ in the second, and $a,b>0$ in both.}
\end{figure}

In the chart  (\ref{KModel2}), with $\delta>0$  fixed and small,  $(C, J_{-\delta})$ has coordinates  $(0,-\delta,  0)$, so can be perturbed to $(C_{-\delta,s}, J_{-\delta,s})$  with coordinates  $(x(s),-\delta,  s)$, where  $x\approx -s/b\delta$ is the unique solution of $s= x(ax-b\delta+ r(x, -\de))$ with $x =O(s)$. The curves  $(C'_{-\delta}, J_{-\delta})$  and $(C, J_{\delta})$  can be similarly perturbed.  By Lemma~\ref{L.cluster.exists},  the  GW invariants of the corresponding clusters
$$
A_s=(C_{-\delta,s}, \ep, J_{-\delta, s}), \qquad
B_s=(C_{\delta,s}, \ep, J_{\delta, s}), \qquad
D_s=(C'_{-\delta,s}, \ep, J_{\delta, s})
$$
are locally constant in $s$: for sufficiently small $s$ (and $\de>0$) we have
\bear
\label{5.6}
GW^E(A_s)=GW^E(\O_{-\delta}), \qquad 
GW^E(B_s)=GW^E(\O_{\delta}), \qquad 
GW^E(D_s)= GW^E(D_{-s}).
\eear

Assume $a>0$ (else change $s\ra -s$), and that $b>0$ (else change $t \ra -t$). The  moduli space $\M^S$ over $S$ is locally near $(C,  J_0)$ the level set    $$
\big\{(x,t,s)\, \big|\, s=x(ax+bt +r(x,t))\big\}.
$$
  For each fixed $s, t$ small, this quadratic equation in $x$ has either no solution or two solutions, except at a single point $x\approx -bt/2a $, where the tangent is in the kernel of the projection to $\J$, which means that this point lies on the wall and is a non-degenerate critical point of (\ref{5.eq.pi}).

For a small positive $s$, the moduli space over $\ga_s(t) = (t, s)$, $-\delta\le t\le \delta$ therefore contains a path in  $\M_{emb}$ from the core of cluster $D_s$ to the core of $B_s$ that does not cross the wall.  After a small perturbation using 
Lemma~\ref{4.Jisol.e.0},  Lemma~\ref{IsotopyLemma6.2} applies to give: 
\bear
\label{5.7}
GW^E(D_s)\ \approx\  GW^E(B_s).
\eear 
For a small {\em negative} $s$ the moduli space over $\ga_s(t) = (t, s)$, $-\delta\le t\le 0$ is a path in  $\M_{emb}$ from the core of cluster $A_s$ to the core of $D_s$, crossing now the wall transversally (at a point in $\W^1\setminus \A$). Perturbing $\ga_s$ by Lemma~\ref{4.Jisol.e.0} gives a path in $\Ji$ so Lemma~\ref{IsotopyLemma6.3} applies in this case to give 
\bear
\label{5.8}
GW^E(A_s)\ \approx\ -GW^E(D_s).
\eear
The proof is completed by combining (\ref{5.6}), (\ref{5.7}) and (\ref{5.8}). 
\end{proof}

\vspace{5mm}
\setcounter{equation}{0}
\section{Structure theorems and the proof of the GV conjecture}
\label{section8}
\bigskip

The isotopy results of the previous section lead quickly to a formula (\ref{GW=creative.1})  that shows that the GW invariants have a remarkably simple structure. This formula is compatible with a simple geometric picture:  if one could find a $J\in\J$ so that all simple $J$-holomorphic maps in $X$ were elementary, then  $GW(X)$ would have exactly the form  \eqref{GW=creative.1},  with $e_{A,g}(X)$ equal to the  count of $J$-holomorphic {\em curves} with homology class $A$ and genus $g$.  However,  it is far from clear whether any such $J$ exists.   Thus the coefficients  $e_{A,g}(X)$ can be regarded as   {\em  virtual counts of elementary clusters in $X$}.

\begin{theorem}
\label{Theorem7.1}
For any closed symplectic Calabi-Yau 6-manifold $X$, there exist {\em unique integer} invariants  $e_{A, g}(X)$ such that 
\bear\label{GW=creative.1} 
GW(X)\ =\ \sum_{A\ne 0}\sum_{g\ge 0}\  e_{A, g}(X) \cdot GW^{elem}_{g}(q^{A}, t).
\eear
\end{theorem}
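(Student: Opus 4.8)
The plan is to combine the Cluster Isotopy Theorem with the cluster decomposition machinery of Section~2, working one energy level at a time and inducting on the level filtration $\ell(f) = \Omega(d) + g$. First I would fix an energy bound $E$ and a generic almost complex structure $J \in \J_{isol}^E$ (which exists by Corollary~\ref{Cor5.6}); by Lemma~\ref{Lemma1.5} the set $\C^{J,E}$ of embedded $J$-holomorphic curves of energy $\le E$ is compact and countable, and the whole moduli space decomposes. The strategy is to show that the truncation $GW^E(X)$ of $GW(X)$ below energy $E$ has the form $\sum_{A,g} e^E_{A,g}(X)\, GW^{elem,E}_g(q^A,t)$ for uniquely determined integers $e^E_{A,g}(X)$, and then let $E \to \infty$: since each coefficient of $GW(X)$ stabilizes once $E$ exceeds the energy of the relevant class, the integers $e^E_{A,g}(X)$ stabilize to integers $e_{A,g}(X)$ independent of $E$, giving \eqref{GW=creative.1}.

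The core of the argument is a double induction. The outer induction is on the level $m$; the inductive hypothesis is that the contribution $GW^E(U)$ of any open set $U \subset \C(X)$ with $\partial U \cap \C^{J,E} = \emptyset$ whose closure meets only curves of level $\le m$ can be written as an integer combination of the $GW^{elem,E}_g(q^A,t)$. For the base case $m = 0$ (genus zero curves in primitive classes), Lemma~\ref{Lemma1.5} gives only finitely many such curves, each embedded and isolated; each has a cluster neighborhood by Lemma~\ref{L.cluster.exists}, and by the Cluster Isotopy Theorem each such regular cluster contributes $\sign(C,J_0)\, GW^{elem,E}_0(q^C,t)$ modulo higher-level clusters — and at level $0$ there are no higher-level corrections below, so these $\approx$-corrections are absorbed at higher levels of the induction. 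For the inductive step, I would invoke Proposition~\ref{P.cluster.decomp.exist} to write $GW^E(X) = \sum_i GW^E(\O_i)$ as a finite sum over clusters; by Lemma~\ref{Lemma1.5}c, at each new level only finitely many new clusters appear whose cores are not already enclosed in lower-level cluster balls. Each such cluster $\O_i = (C_i, J, \ep_i)$ is regular (one first perturbs $J$ near $C_i$, fixing $J|_{C_i}$, to make it regular — this is where one uses that a generic $J$ in $\J_E^*$ makes embeddings regular, Lemma~\ref{Lemma1.1}), so the Cluster Isotopy Theorem gives $GW^E(\O_i) \approx \sign(C_i, J)\, GW^{elem,E}_g(q^{[C_i]}, t)$, where $\approx$ means equality modulo finitely many clusters of strictly higher level. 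Collecting the leading terms over all clusters of a given level $m$ produces the contribution $\sum e_{A,g}\, GW^{elem,E}_g(q^A,t)$ with $e_{A,g} \in \Z$ (a signed count of elementary clusters), while all the $\approx$-error terms are pushed into level $> m$ and handled by the inductive hypothesis applied at higher levels. Since the induction on $m$ terminates (Lemma~\ref{Lemma1.5}a), this yields the desired expansion of $GW^E(X)$.

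For uniqueness of the integers $e_{A,g}(X)$: I would argue that the power series $\{GW^{elem}_g(q,t)\}_{g\ge 0}$ are linearly independent over $\Z$ (indeed over $\Q$) in a strong sense — by \eqref{logZ}, $GW^{elem}_g(q,t) = q\,(2\sin\tfrac t2)^{2g-2} + O(q^2)$, so the leading $q^1$-coefficients $(2\sin\tfrac t2)^{2g-2}$ for distinct $g$ are linearly independent as Laurent-type series in $t$ (lowest $t$-power is $t^{2g-2}$, all distinct). Hence, looking at the coefficient of $q^A$ in \eqref{GW=creative.1} for a primitive class $A$ determines $e_{A,g}(X)$ for all $g$ from the lowest-order terms; then one proceeds by induction on the divisibility/degree of $A$, subtracting off the already-determined contributions of $GW^{elem}_g(q^B,t)$ with $A = kB$, $k \ge 2$, to isolate $e_{A,g}(X)$. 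The independence of $E$ is automatic once uniqueness is established, since $GW^E$ and $GW^{E'}$ agree below $\min(E,E')$.

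The main obstacle is the bookkeeping of the $\approx$-relation across the levels: one must verify that the "finitely many higher-level clusters" generated as error terms at each stage are genuinely of \emph{strictly} higher level (so the induction is well-founded and terminates) and that no infinite regress or double-counting occurs — in particular that the finitely-many-clusters claims of Lemma~\ref{Lemma1.5}c and Proposition~\ref{P.cluster.decomp.exist} interact correctly with the isotopy-induced corrections, and that the genericity perturbation of $J$ near each core curve (needed to make clusters regular before applying the Cluster Isotopy Theorem) does not disturb the already-constructed lower-level clusters. A secondary technical point is confirming that $GW^{elem}_g$ itself has integer coefficients in the appropriate sense so that the "integer combination" claim is meaningful — but this is exactly Theorem~\ref{theorem3.4} together with \eqref{logZ}, so it is available. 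Once the level induction is organized carefully, the integrality of $n_{A,g}$ (the last sentence of the Structure Theorem) follows by applying the BPS transform to \eqref{GW=creative.1} and using that the BPS transform is additive and that each $GW^{elem}_g(q^A,t)$ transforms to a series with integer BPS coefficients by Theorem~\ref{theorem3.4}(a).
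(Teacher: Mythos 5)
Your overall strategy is the paper's: truncate below energy $E$, pick a generic $J$, decompose via Proposition~\ref{P.cluster.decomp.exist}, reduce each cluster to an elementary contribution by the Cluster Isotopy Theorem~\ref{CIT_6.1}, resolve the resulting higher-level error clusters by an induction on level, get uniqueness from the linear independence of the series $GW^{elem}_g(q^A,t)$, and let $E\to\infty$. However, the inductive bookkeeping --- exactly the point you flag as the main obstacle --- is not set up correctly as written, and this is where the paper's Lemma~\ref{Lemma7.2} supplies the missing idea. Two concrete problems: first, your inductive hypothesis is stated for open sets $U\subset\C(X)$ relative to the one fixed $J$, but the error terms produced by Theorem~\ref{CIT_6.1} are clusters $(C_i,\ep_i,J_i)$ at \emph{other} almost complex structures $J_i$ arising along the isotopy in $\J_C$, so your hypothesis never applies to them; second, deferring level-$m$ errors to ``the inductive hypothesis applied at higher levels'' is circular in an upward induction on $m$. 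The paper fixes both at once: Lemma~\ref{Lemma7.2} asserts the expansion for an \emph{arbitrary} regular $E$-cluster (at any $J$) and proves it by complete induction on the relative level $\Omega(k)+h-g$ of the \emph{truncation} of the cluster series, observing that after applying Theorem~\ref{CIT_6.1} each error cluster $\O_i$ has relative level $m_i>0$, so its contribution to the level-$m$ truncation is its own truncation at level $m-m_i<m$, which is covered by the inductive hypothesis. Your alternative --- a recursion that bottoms out because levels below energy $E$ are bounded (Lemma~\ref{Lemma1.5}a) --- can be made to work, but only after restating the hypothesis for arbitrary regular clusters at arbitrary $J$, which amounts to rediscovering Lemma~\ref{Lemma7.2}.

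A smaller point: the detour of perturbing $J$ near each core to gain regularity (and the attendant worry about disturbing the lower-level clusters already constructed) is unnecessary. Choose $J\in\J^*_E$ at the outset, as the paper does: then every simple $J$-holomorphic map below energy $E$ is a regular embedding, so every cluster core in the decomposition is automatically regular, and such $J$ lie in $\Ji^E$ by the argument of Corollary~\ref{Cor1.3}. With those repairs your uniqueness argument and the passage $E\to\infty$ match the paper's proof.
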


\begin{proof}  
 The uniqueness of the coefficients in (\ref{GW=creative.1})  is easily shown because  the collection of series
\bear
\label{expansionGWelem}
GW_g^{elem}(q^{A}, t)=t^{2g-2} q^{A}\, \Big(1+\mbox{ higher order in $t$ and $q^A$}\Big)
\eear
for $g\ge 0$ and $A\in H_2(X, \Z)$ is linearly independent. To prove existence, fix $E$, choose any parameter $J\in \J^*_E$, and use Proposition~\ref{P.cluster.decomp.exist} to write $GW^E(X)$ as a sum of finitely many cluster contributions. Formula (\ref{GW=creative.1}) follows from the corresponding formula for each cluster, which is proved in Lemma~\ref{Lemma7.2} below, by taking $E\to\infty$.
\end{proof}

\begin{lemma}
\label{Lemma7.2} 
For any regular $E$-cluster $\O$ centered at a genus $g$ curve $C$ there exist unique integers $e_{d, h}(\O)$, beginning with $e_{1, g}(\O)=\sign(C)$,  such that
\bear\label{6.3}
GW^E(\O) = \sum_{d\ge 1} \sum_{h\ge g} e_{d, h}(\O)\ GW^{elem, E}_{h} (q^{dC}, t),
\eear
where both sides are truncated below energy level $E$. 
\end{lemma}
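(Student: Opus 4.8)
The plan is to prove the statement by induction on the level of the core curve $C$, using the Cluster Isotopy Theorem~\ref{CIT_6.1} as the base input and the cluster refinement Corollary~\ref{L.cluster.ref} to control the error terms. First I would establish uniqueness exactly as in the proof of Theorem~\ref{Theorem7.1}: the leading term of $GW^{elem,E}_h(q^{dC},t)$ is $t^{2h-2}q^{dC}$ by \eqref{expansionGWelem}, and as $(d,h)$ ranges over pairs with $d\ge 1$ and $h\ge g$ these leading monomials are distinct (distinct homology classes $dC$ or distinct powers of $t$), so the coefficients $e_{d,h}(\O)$ are determined recursively by matching lowest-order terms. In particular, matching the $q^{C}t^{2g-2}$ coefficient forces $e_{1,g}(\O)=\sign(C)$, since by \eqref{logZ} the series $GW^{elem,E}_g(q^C,t)$ begins with $q^C t^{2g-2}$ and by Theorem~\ref{CIT_6.1} the $q^C t^{2g-2}$ coefficient of $GW^E(\O)$ is $\sign(C,J_0)$.

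For existence, I would run the induction on the level $\ell = \Omega(d(C))+g$ of the core curve, bounded above because an $E$-cluster contains only finitely many levels (Lemma~\ref{Lemma1.5}). By the Cluster Isotopy Theorem, $GW^E(\O) \approx \sign(C)\,GW^{elem,E}_g(q^C,t)$, meaning the two differ by a finite sum $\sum_i \pm GW^E(\O_i)$ of contributions of clusters $\O_i$ whose core curves have \emph{strictly higher level} than $C$. Each such $\O_i$ is again a cluster below energy $E$; after a further deformation of $J$ near its core (as in the proof of Theorem~\ref{CIT_6.1}, using Lemma~\ref{Lemma5.7} to move into $\J_{isol}^E$) I may assume it is regular, and the isotopy invariance (Lemma~\ref{IsotopyLemma6.2}) together with Corollary~\ref{L.cluster.ref} shows this deformation only changes $GW^E(\O_i)$ by yet higher-level cluster contributions. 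Applying the inductive hypothesis to each $\O_i$ — each has strictly higher level than $C$, and the induction terminates since the level is bounded — expresses $GW^E(\O_i)$ as an integer combination $\sum_{d,h} e_{d,h}(\O_i)\,GW^{elem,E}_h(q^{d C_i},t)$. Since each $C_i$ is a multiple of $C$ in homology (the core curves that appear in the $\approx$ relation represent classes $k[C]$) and has genus $\ge g$, the class $d[C_i]$ is of the form $d'[C]$ with $d'\ge 2$ and the relevant genus is $\ge g$, so collecting all terms yields $GW^E(\O) = \sum_{d\ge1}\sum_{h\ge g} e_{d,h}(\O)\,GW^{elem,E}_h(q^{dC},t)$ with $e_{d,h}(\O)\in\Z$. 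The leading term is untouched, so $e_{1,g}(\O)=\sign(C)$.

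The main obstacle I anticipate is bookkeeping the higher-level clusters carefully enough to be sure the induction is well-founded and that the homology classes and genera that occur all fall into the claimed range $\{d\ge 1,\ h\ge g\}$. Concretely one must verify: (i) that the "$\approx$" error clusters really do have core curves in classes $k[C]$ with $k\ge 2$ (or the same class but higher genus), so that after expansion every monomial carries $q^{dC}$ with $d\ge 1$ and $t$-power $\ge t^{2g-2}$ — this follows from Definition~\ref{D.cluster}(a) and the fact that a curve can only accumulate as a multiple cover of a lower-level curve (Lemma~\ref{Lemma1.4}); (ii) that re-regularizing each error cluster does not escape the finite level range, which is Lemma~\ref{Lemma1.5}a applied below energy $E$; and (iii) that the integrality is preserved under the finite sum, which is immediate once each $e_{d,h}(\O_i)\in\Z$ by induction. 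None of these is deep, but assembling them into a clean finite induction — rather than an infinite regress — is where care is needed, and it is exactly the finiteness of the level filtration below a fixed energy $E$ that makes it work.
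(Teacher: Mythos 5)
Your proposal is correct and takes essentially the same route as the paper: uniqueness from the linear independence of the leading terms $t^{2h-2}q^{dC}$ of the elementary series, and existence from the Cluster Isotopy Theorem together with an induction over levels, made finite by the energy bound $E$. The only (minor, organizational) difference is that you induct downward on the level of the core curve, invoking the upper bound on levels below energy $E$, whereas the paper runs a complete induction on the relative level of the \emph{truncation}, quantified over all regular clusters at once, so its base case is vacuous and no a priori level bound is needed.
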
 

\begin{proof}    
Because all $J$-holomorphic maps in $\O$ represent $k[C]$ and have genus at least $g$, $GW^E(\O)$ has the form
\bear\label{GW.o.o}
GW^E(\O)\ =\ \sum_{k\ge 1} \sum_{h\ge g} GW^E_{k, h}(\O) \ q^{kC} t^{2h-2}
\eear 
with $k\omega(C)\le E$ and $h\le E$.   Define the  {\em $(C,g)$--relative level} of the monomial $t^{2h-2}q^{kC}$  to be  $\Omega(k)+ h-g$, and note that all terms in \eqref{GW.o.o}  have non-negative relative level.  

  Using this series \eqref{GW.o.o}, we define the truncation $[GW^E(\O)]_m$ of the lefthand side of \eqref{6.3} to be the sum of the terms in \eqref{GW.o.o} with $(C, g)$-relative level  $\Omega(k)+ h-g\le m$. The righthand side of \eqref{6.3} can be similarly truncated. 
In fact, by \eqref{expansionGWelem} the truncation of  \eqref{6.3} involves  only those $e_{d,h}(\O)$ with $\Omega(d)+h-g\le m$. We will prove the lemma using complete induction on $m$.

 The induction begins with  $m=-1$; in this case, the truncations of both sides of  \eqref{6.3}  vanish. For the induction step,   we assume that for {\em every} regular cluster $\O$, whose core curve corresponds to any  $(A, g)$, 
  there are coefficients $e_{d, h}(\O)\in \Z$ such that \eqref{6.3}  holds when
 truncated at  $(A,g)$--relative level  $m-1$.   Now by Theorem~\ref{CIT_6.1} we have
\best
GW^E(\O)\ =\ \pm GW^{elem, E}_{g} (q^C, t)\ +\  \sum_{i\in I} \pm GW^E(\O_i),
\eest
where the  $\O_i$ are clusters,  indexed by a finite set $I$,  whose core  curves $C_i$ have $[C_i]=k_i[C]$,  genus $g_i\ge g$ and $(C, g)$--relative level $m_i=\Omega(k_i)+ g_i-g>0$.   When  $GW^E(\O)$ is truncated at relative level $m$, each $GW^E(\O_i)$ is truncated at  $(C_i, g_i)$-relative level $m-m_i<m$ so by induction, 
\best
GW^E(\O)\ =\ \pm GW^{elem, E}_{g} (q^C,  t)\ +\  \sum_{i\in I} \pm  \Big(\sum_{d, h} e_{d, h}(\O_i)\;GW^{elem, E}_{h} \ (q^{d k_iC}, t)\Big)
\eest
holds when truncated at  $(C, g)$--relative level $m$. This completes the induction step. 
 \end{proof} 
 
 In fact, we get the following result for {\em any} closed symplectic 6-dimensional 
manifold $X$ as long as we restrict to the GW invariants coming only from  classes  $A\in H_2(X,\Z)$ with vanishing Chern number $c_1(A)=c_1(X)A$.
\begin{theorem}
\label{T.GW=creative.3-fold} 
Assume $X$ is a  closed symplectic 6-manifold. Then there  exist  unique integer invariants  $e_{A, g}(X)$, defined for  homology classes $A$ with  $c_1(A)=0$, such that the GW invariant of $X$ satisfies
\bear\label{GW=creative.1.3fold} 
\sum_{\substack{A\ne 0\\ c_1(A)=0}}\sum_{g\ge 0} GW_{A, g}(X)\ t^{2g-2} q^A\ =\ \sum_{\substack{A\ne 0 \\ c_1(A)=0}}\sum_{g\ge 0}\  e_{A, g}(X) \cdot GW^{elem}_{g}(q^{A}, t).
\eear
\end{theorem}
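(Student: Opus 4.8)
The plan is to rerun the argument of Sections~\ref{section1}--\ref{section7} restricted to the part of the moduli space lying in homology classes with vanishing first Chern number. Let $H_2^{cy}(X)=\{A\in H_2(X;\Z)\mid \omega(A)>0,\ c_1(X)A=0\}$, and let $\oM^{cy}(X)\subset\oM(X)$ be the union of the components $\oM_{A,g,0}(X)$ with $A\in H_2^{cy}(X)$. The point of departure is that, for $\dim X=6$, the dimension formula \eqref{1.dimformula} reads $d=2c_1(X)A+2n$, so on $\oM^{cy}(X)$ (where $n=0$) the virtual class again has dimension zero, exactly as in the Calabi--Yau case; and, since the total homology class is locally constant in the Gromov topology, $\oM^{cy}(X)$ is open and closed in $\oM(X)$. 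The left-hand side of \eqref{GW=creative.1.3fold} is therefore precisely the contribution of $\oM^{cy}(X)$ to the series \eqref{1.GWformula.number}.

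The second, and decisive, observation is that $H_2^{cy}(X)$ is closed under passing to multiples: $c_1(X)(kB)=0$ if and only if $c_1(X)B=0$. Hence every curve that appears in the cluster analysis stays within $H_2^{cy}(X)$. A cluster centered at an embedded curve $C$ with $[C]\in H_2^{cy}(X)$ (Definition~\ref{D.cluster}) captures only $J$-holomorphic maps representing multiples $k[C]$; the refinement Corollary~\ref{L.cluster.ref} and the wall-crossing Lemmas~\ref{IsotopyLemma6.3}, \ref{IsotopyLemma6.4} produce only higher-level clusters whose core curves again represent multiples of $[C]$; and the elementary model can be realized in a tubular neighborhood of $C$ in $X$ precisely because $c_1(N_C)=c_1(X)C-(2-2g)=2g-2$, which lets $N_C$ split smoothly as $L\oplus L$ with $L$ a spin structure, as required in the construction of Theorem~\ref{3.Junhocurves}. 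Thus wherever Sections~\ref{section1}--\ref{section7} invoke the global hypothesis $c_1(X)=0$, all that is actually used is the pointwise condition $c_1(X)A=0$ for the classes in play.

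Given these two facts, the bulk of the proof is to transcribe Sections~\ref{section1}--\ref{section7} with $\oM(X)$ replaced throughout by $\oM^{cy}(X)$ and $\C^{J,E}$ by its Calabi--Yau part $\C^{J,E}_{cy}$ (embedded curves in classes in $H_2^{cy}(X)$). The only substantive adjustment is to the genericity statements: Definition~\ref{1.defJisol}, and consequently $\Ji^E$ and Lemmas~\ref{Lemma1.1}--\ref{Lemma1.5}, Corollaries~\ref{Cor1.3} and \ref{Cor5.6}, must now constrain only $\oM^{J,E}_{cy}(X)_{simple}$ --- this is unavoidable, since for a general $X$ the full space of simple maps is not a discrete set, the components in classes with $c_1(X)A>0$ being positive-dimensional. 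Their proofs are unchanged, because the Sard--Smale transversality arguments of the Appendix and Section~\ref{section4} work one homology class at a time and use only that the linearization $D_p$ has Fredholm index~$0$, which holds on the Calabi--Yau part. One also checks, by the Gromov-compactness argument of Lemma~\ref{Lemma1.5} combined with the factorization Lemma~\ref{Lemma1.4}, that $\C^{J,E}_{cy}$ is open and closed in $\C^{J,E}$: a rough limit of embedded curves in $H_2^{cy}(X)$-classes factors through an embedding whose image again lies in such a class, while any embedded curve Hausdorff-close to a fixed embedded $C$ with $[C]\in H_2^{cy}(X)$ is a multiple cover of $C$. Hence $\oM^{J,E}_{cy}(X)$ is open and closed in the rough topology and admits finite cluster decompositions (Proposition~\ref{P.cluster.decomp.exist}).

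With this in place the rest is automatic: the constructions of Section~\ref{section3} are purely local --- Theorems~\ref{3.Junhocurves}, \ref{elementarycontributionlemma} and \ref{theorem3.4} concern only a neighborhood of a balanced curve $C$ and the Bryan--Pandharipande residue computation, which depend on the genus of $C$ alone --- and the Cluster Isotopy Theorem~\ref{CIT_6.1} and Lemma~\ref{Lemma7.2} hold for regular clusters in $X$ whose core curves lie in $H_2^{cy}(X)$-classes. Running the proof of Theorem~\ref{Theorem7.1} with $\oM^{cy,E}(X)$ in place of $\oM^E(X)$ and letting $E\to\infty$ yields \eqref{GW=creative.1.3fold}, with the integers $e_{A,g}(X)$ unique by the linear independence of the series $GW^{elem}_g(q^A,t)$, $g\ge 0$, $A\in H_2^{cy}(X)$. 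The main obstacle is the bookkeeping in the previous paragraph --- correctly localizing $\Ji^E$ to the Calabi--Yau part and verifying that $\C^{J,E}_{cy}$ is an open and closed piece of $\C^{J,E}$, so that the cluster-decomposition machinery still applies --- after which every other step is a routine copy of the Calabi--Yau argument.
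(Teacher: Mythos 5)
Your overall strategy is the paper's own: the paper proves this theorem by replacing $\oM(X)$ everywhere by the union of its zero-dimensional pieces $\bigsqcup_{A\ne 0,\, c_1(A)=0}\oM_{A,g}(X)$ and rerunning Sections~\ref{section1}--\ref{section7}, and most of your bookkeeping (index zero on the Calabi--Yau part, closure of the set of Calabi--Yau classes under multiples, $c_1(N_C)=2g-2$ so the elementary model of Theorem~\ref{3.Junhocurves} fits a tubular neighborhood of the core, localizing Definition~\ref{1.defJisol} to the Calabi--Yau part) matches what the paper leaves to the reader. However, there is a gap at precisely the one point the paper singles out as the content of the proof. You justify the key claim --- that $\C^{J,E}_{cy}$ is closed under rough limits, i.e.\ that a limit of Calabi--Yau-class curves is a multiple cover of an embedded curve in a Calabi--Yau class --- by citing Lemmas~\ref{Lemma1.4} and \ref{Lemma1.5}. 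But the proofs of those lemmas use the full hypothesis $J\in\Ji^E$, namely that \emph{all} simple maps below energy $E$ are isolated embeddings with disjoint images, and you have explicitly (and necessarily) weakened the genericity so that it constrains only simple maps in Calabi--Yau classes. With only that weakened hypothesis, the reduction $\phi:C_{red}\to X$ of a limit map could a priori have components in classes $B_i$ with $c_1(B_i)\ne 0$, about which your localized $\Ji$ says nothing, so the factorization argument does not go through as cited.

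The missing ingredient is the dimension count the paper invokes: for generic $J$ there are no simple $J$-holomorphic maps below energy $E$ in any class $B$ with $c_1(B)<0$, since the corresponding index is $2c_1(B)<0$; hence every component of the reduction has $c_1(B_i)\ge 0$, and $0=c_1(A)=\sum d_i c_1(B_i)$ with $d_i\ge 1$ forces $c_1(B_i)=0$ for all $i$. Only then does your localized isolation/embeddedness statement apply to the components, and connectedness of the limit forces $C_{red}$ to be a single embedded Calabi--Yau-class curve, exactly as in Lemma~\ref{Lemma1.4}. Note this also means the genericity cannot be imposed ``only'' on $\oM^{J,E}_{cy}(X)_{simple}$: one must also take $J$ generic for the classes with $c_1<0$ (a further routine Sard--Smale intersection, as in Corollary~\ref{corA.2}), in order to empty those moduli spaces. (This is the same positivity argument the paper writes out in the proof of Theorem~\ref{Theorem9.1}.) A smaller point: your phrase ``any embedded curve Hausdorff-close to a fixed embedded $C$ \ldots is a multiple cover of $C$'' is not what is needed or true; the correct statement, as in Lemma~\ref{L.cluster.exists}, concerns accumulation points of $\C^{J,E}_{cy}$ at $C$. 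With the dimension count inserted, the remainder of your argument is a faithful transcription of the paper's proof.
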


\begin{proof} 
The dimension \eqref{1.dimformula} is $2c_1(A)$, independent of the genus. It suffices to check that all the results in Sections~1-6 continue to hold as long as we replace  everywhere $\oM(X)$ by the union of its zero dimensional pieces 
\bear\label{M.CY.part}
\ma\bigsqcup_{\substack{A \ne 0 \\ c_1(A)=0}} \oM_{A, g}(X).
\eear
A dimension count shows that for generic $J$ the limit points of \eqref{M.CY.part} in the rough topology (after restricting below fixed energy level $E$)  can only be multiple covers of points of  \eqref{M.CY.part}, and not of points with  $c_1(A)\ne 0$. The rest is straightforward, and   the details are left to the reader.
\end{proof}

  \vspace{2mm}
 
\subsection{Proof of the GV conjecture}  The GV conjecture follows easily from Theorem~\ref{Theorem7.1} and the explicit form of the GW invariant of an elementary cluster.  For simplicity, set
$$
 {\cal E}_h(q, t)\ =\ \sum_{k=1}^\infty \frac 1k\l(2\sin \frac{kt}2\r)^{2h-2}  q^{k}.
$$
With this notation,  the GW invariant \eqref{3.3} of an elementary cluster whose core curve has genus $g$  is
\bear
\label{8.GWelemseries}
GW_g^{elem}(q,  t)  \ =\  \ma\sum_{d\ne 0} \sum_{h\ge g} \ n_{d, h}(g) \ {\cal E}_h(q^d, t),
\eear
where the  $n_{d, h}(g)$ are integers by Proposition~\ref{theorem3.4}a  that vanish unless $h\ge g$ by Proposition~\ref{theorem3.4}b.    The
 GV Conjecture then  takes the following form.
\begin{theorem}
\label{Theorem8.1} 
Let $X$ be a closed symplectic Calabi-Yau 6-manifold.  Then there are unique integers $e_{A, h}(X)$  such that 
\bear\label{GW=creative} 
GW(X)\ =\ \sum_{A\ne 0} \sum_{h} e_{A, h}(X) \ {\cal E}_{h}(q^{A}, t).
\eear
In fact, these BPS numbers $n_{A, h}(X)$  are obtained from the virtual counts $e_{A, h}(X)$ of Theorem~\ref{Theorem7.1}  by the universal formula involving the coefficients  $n_{d,h}(g)$ in  \eqref{8.GWelemseries}:
\bear
\label{7.3}
n_{A, h}(X)= \sum_{\substack{d, B\\ dB=A}}\sum_{g=0}^h \ e_{B, g}(X) \cdot n_{d, h}(g) \in \Z,
\eear
 where the first sum is over all integers $d\ge 1$ and $B\in H_2(X, \Z)$ such that $d B=A$ in $H_2(X, \Z)$.
\end{theorem}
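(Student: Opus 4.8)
The plan is to obtain Theorem~\ref{Theorem8.1} as a formal consequence of Theorem~\ref{Theorem7.1} and the explicit expansion \eqref{8.GWelemseries} of the elementary series, by reorganizing the resulting sum inside the Novikov ring $\Lambda$. First I would recall from Theorem~\ref{Theorem7.1} that $GW(X)=\sum_{B\ne 0}\sum_{g\ge 0}e_{B,g}(X)\,GW^{elem}_g(q^B,t)$ for uniquely determined integers $e_{B,g}(X)$, and from \eqref{3.3}--\eqref{8.GWelemseries} that $GW^{elem}_g(q,t)=\sum_{d\ge 1}\sum_h n_{d,h}(g)\,{\cal E}_h(q^d,t)$, where by Theorem~\ref{theorem3.4} the $n_{d,h}(g)$ are integers that vanish for $h<g$ and, for each fixed $d$, also vanish once $h$ is large.

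Substituting the second expansion into the first and collecting the terms contributing each series ${\cal E}_h(q^A,t)$ --- that is, grouping the triples $(B,d,g)$ by the value $A=dB\in H_2(X;\Z)$ and by $h$ --- gives
\[
GW(X)\ =\ \sum_{A\ne 0}\sum_h\Big(\ \sum_{\substack{d\ge 1,\ B\\ dB=A}}\ \sum_{g=0}^{h}e_{B,g}(X)\,n_{d,h}(g)\Big)\,{\cal E}_h(q^A,t),
\]
where the truncation of the $g$-sum at $h$ uses $n_{d,h}(g)=0$ for $g>h$; this is exactly \eqref{GW=creative}, with the coefficient of ${\cal E}_h(q^A,t)$ equal to the right-hand side of \eqref{7.3}. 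The rearrangement is legitimate because $\Lambda$ carries only finitely many nonzero terms below each energy level and the grouping respects this filtration: for fixed $A$ the factorizations $A=dB$ with $\om(B)>0$ are finite in number ($d$ is bounded by the degree $d(A)$, after which $\om(B)=\om(A)/d$ is determined), and the remaining sum runs only over $0\le g\le h$. Hence each coefficient in \eqref{7.3} is a \emph{finite} sum of products of integers, so it is an integer by Theorem~\ref{theorem3.4}(a) together with the integrality in Theorem~\ref{Theorem7.1}; this proves \eqref{7.3} and the integrality assertion at once, while Theorem~\ref{theorem3.4}(b) makes the outer $h$-sum locally finite in $\Lambda$.

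To finish, I would establish uniqueness and the identification with the BPS numbers by invoking the invertibility of the BPS transform (Proposition~2.1 of \cite{b-p-BPS}): the map sending $\{N_{A,h}\}$ to $\{n_{A,h}\}$ determined by $\sum N_{A,h}t^{2h-2}q^A=\sum n_{A,h}\,{\cal E}_h(q^A,t)$ is a bijection on the relevant coefficient arrays --- equivalently, the series ${\cal E}_h(q^A,t)=t^{2h-2}q^A(1+\text{higher order})$ are triangular in an appropriate ordering of the bidegrees $(A,h)$, so the assignment is injective. Consequently the coefficients of $GW(X)$ in any expansion of the shape \eqref{GW=creative} are uniquely determined, and since the previous paragraph produces one with the integer coefficients \eqref{7.3}, these are the unique coefficients; comparing \eqref{GW=creative} with \eqref{Intro.1} identifies them with the BPS numbers $n_{A,h}(X)$. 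I do not anticipate a genuine obstacle: all the substantive content --- the isotopy argument behind Theorem~\ref{Theorem7.1}, the integrality of the elementary coefficients in Theorem~\ref{theorem3.4}, and the invertibility of the BPS transform --- is already in place, and what remains is the bookkeeping of a triple sum. The only point that warrants care is verifying that the regrouping is compatible with the energy filtration of $\Lambda$, i.e. that each coefficient \eqref{7.3} really is a finite integer sum.
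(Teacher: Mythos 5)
Your proposal is correct and follows essentially the same route as the paper: substitute the expansion \eqref{8.GWelemseries} into the structure formula \eqref{GW=creative.1}, regroup the triple sum by $A=dB$ and $h$ using $n_{d,h}(g)=0$ for $h<g$, and justify the rearrangement by truncating below an energy level where all sums are finite. Your added remark on uniqueness via the invertibility (triangularity) of the BPS transform is exactly the fact the paper invokes from Proposition~2.1 of \cite{b-p-BPS} in Section~3, so nothing is missing.
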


\begin{proof} 
This follows immediately by combining (\ref{GW=creative.1}) and  (\ref{8.GWelemseries})  and rearranging the sums:
\best
GW(X)& = & \sum_{A\ne 0} \sum_{g\ge 0} e_{A, g}(X) \sum_{d\ge 1} \sum_{h\ge g}  n_{d, h}(g)\  {\cal E}_h(q^{dA}, t)  \\ 
 & = &\sum_{A\ne 0} \sum_{d \ge 1}\sum_h \Big( \sum_{g\le h} e_{A, g}(X)\,  n_{d, h}(g) \Big)\  
 {\cal E}_h(q^{dA},  t)   \\
& =& \sum_{A\ne 0}  \sum_{\substack{d, B \\ d B=A}} \sum_h \Big( \sum_{g\le h} e_{B, g}(X) \,  n_{d, h}(g) \Big)\  {\cal E}_h(q^A, t).  
\eest
The rearrangements are justified by first working below an energy level $E(A,g)\le E$, where all sums are finite.
\end{proof}

\vspace{5mm}
\setcounter{equation}{0}
\section{Extensions of the GV structure theorem} 
\label{section9}
\bigskip

  This section extends  Theorem~\ref{Theorem8.1}  in two different directions:  to general symplectic $6$-manifolds,  and to  the genus  zero GW invariants of  closed symplectic $n$-manifolds, $n\ge 6$, that are semipositive (as defined in 
 \cite{ms}),  a class that includes  symplectic Calabi-Yau manifolds.  In fact,  all  transversality results  were proved for simple maps in index zero moduli spaces.  A version of the Cluster Decomposition Proposition~\ref{P.cluster.decomp.exist}  holds provided    the underlying curve map  (\ref{1.maptoC}) does not increase the dimension of such moduli spaces in the sense described below.

\medskip

 We restrict to the {\em primary GW invariants} of $X$, which are defined using the evaluation map (but not the stabilization map) in (\ref{0.1}).  For each collection  $\{\ga_i\} \subset H^*(X, \Z)$ consider the generating function 
\bear\label{D.pGW}
GW^X(\ga_1, \dots, \ga_k)\ =\  \sum_{A\ne 0} \sum_{g\ge  0}\  \lg  [\oM_{A, g, k}^J(X)]^{vir}, \ev^*(\ga_1\ti  \dots \ti \ga_k) \rg \  q^A t^{2g-2}.
\eear
The pairing is defined to be zero unless the  formal dimension is zero, that is, unless 
$\iota=0$, where
\bear\label{dim.M.cut}
\iota=2c_1(A) + (\dim X-6)(1-g) +2k -\sum_{i=1}^k \dim \ga_i.
\eear
As usual, the pairing vanishes unless $\dim \ga_i\ge 2$ for each $i$, so we henceforth assume this inequality. Throughout this section, we  assume that $\dim X \ge 6$. 

The coefficients in \eqref{D.pGW} are obtained by fixing pseudo-cycles $\beta_i:B_i \to X$ representing the Poincar\'{e} duals of $\ga_i$ (cf. \cite[Section~6.5]{ms}), and restricting to the  index $\iota=0$ constrained moduli space
\bear\label{M.cut}
\oM_{A, g, B}^J(X)=  \oM_{A, g, k}^J(X) \, \times_{X^k}  (B_1\ti \dots \ti B_k)
\eear
(the fiber product of  the evaluation map $\ev:   \oM_{A, g, k}^J(X)\to X^k$  and the map $B_1\ti \dots \ti B_k \to X^k$).  This gives rise in the usual way to the primary GW invariant that appears as the coefficients in \eqref{D.pGW}  (cf. page 197 of \cite{ms}). 
 
Lemma~\ref{Lemma1.1} remains true for these  index~0  constrained moduli spaces  in the following form.  Let $\D$ be the (countable) set consisting of the indexing data $(A, g, \ga)$ appearing in \eqref{D.pGW}.  For each $\gamma=(\gamma_1,\dots, \gamma_k)$, choose a set of pseudo-cycle representatives $\beta_1, \dots, \beta_k$ that are in general position.  Standard transversality results show that, for each element of $\D$,   each open stratum of  the constrained universal moduli space 
\bear\label{D.pi.cut}
\oM_{A,g,B}(X)_{simple} \to \J
\eear 
is a manifold.  
The Sard-Smale Theorem gives a Baire set of regular points in  $\J$  for the map \eqref{D.pi.cut} for each $\D$; after intersecting over the  elements of $\D$ we can assume these are regular  for all $\D$.   Parts (a) and (b) of  Lemma~\ref{LemmaA.1}, together with  the Sard-Smale Theorem,  give two similar Baire sets.   Another intersection produces a single Baire set $\J^*$ of $\J$ such that for each $J\in\J^*$ all index~0 moduli spaces \eqref{M.cut} satisfy: 
\begin{enumerate}[(a)]
\item  All simple $J$-holomorphic maps are regular,   and are embeddings  with pairwise disjoint images that are {\em $B$-regular}, meaning that for each $i=1, \dots, k$,  $f(x_i)$ is a regular value of $\beta_i$ and  $(\beta_i)_*(T_{b_i}B_i) \cap f_*(T_{x_i} C) =0$ for each $b_i\in B_i$ with $\beta_i(b_i)=f(x_i)$. 
\item The projection  \eqref{D.pi.cut} is  a local diffeomorphism around each map that is  regular, $B$-regular,  and an embedding.
\end{enumerate}
Moreover,  for each $J\in \J^*$, there are no simple $J$-holomorphic maps in the spaces \eqref{M.cut} with $\iota<0$.  
The universal moduli space constrained by $B$ is 
\bear\label{M.cut.2}
\oM_B(X)=\ma\bigsqcup_{A, g} \ \oM_{A, g, B}(X),
\eear 
where the disjoint union is only over those $(A, g)$ for which $\iota$ in \eqref{dim.M.cut} is zero. 

 As in the proof of Lemma~\ref{Lemma1.4}, any nontrivial $J$-holomorphic map $f:C\ra X$ has an associated ``reduced map'' $\phi:C_{red} \ra X$, which is a simple $J$-holomorphic map with the same image as $f$.  In this context, the underlying curve map \eqref{1.firstunderlyingcurve} extends to a map 
\bear
\label{curve.B}
c:\oM_B(X)\ma\lra  \mathrm{Subsets}(X) \times \J \times B_1\ti \dots \ti B_k
\eear
defined by $(f, x_1, \dots, x_k,  J, b_1, \dots, b_k ) \mapsto (f(C), J, b_1, \dots, b_k)$.  
The examples  below give structure theorems in cases where $c$ does not increase the formal dimension \eqref{dim.M.cut}, that is, where $\iota(\phi)\le \iota(f)$.

Under this assumption, we can replace  $\oM(X)$ everywhere by \eqref{M.cut.2} and all proofs in Sections~1-7, except those in Section~3, hold without change. In particular, there is a dense, path-connected   set $\Ji^E(B)$ corresponding to $\Ji^E$ in Definition~\ref{1.defJisol} but involving only maps in $\oM_B(X)$.  Lemma~\ref{Lemma1.5}  holds under the assumption above with  $\C(X)$ replaced by the image of \eqref{curve.B}. 

To finish, we must  expand the definitions of ``cluster'' and ``elementary cluster''.   Define a {\em $B$-constrained cluster} exactly as in Definition~\ref{D.cluster}, but using only elements $(f, {\bf x}, J, {\bf b})$ in $\oM_B(X)$.  Thus the core $C$ is a smooth embedded  $J$-holomorphic curve $\iota_C:C \ra X$ that we now assume is marked, $B$-regular as defined  in (a) above, and decorated by a choice of  $b_i\in \beta_i^{-1}(\iota_C(x_i))$ for each $i$. The contribution of a $B$-constrained cluster $(C, J, \ep)$  to $GW(\ga)$ depends only on the restriction of $J$ to the $\ep$ neighborhood of the core curve and the restriction of each $\beta_i$ to the $\ep$-ball in $B_i$ centered at $b_i$, for $i=1, \dots, k$. By a diffeomorphism, when $\ep$ is small, we can identify the $\ep$ tubular neighborhood of $C$  with an $\ep$-disk bundle of the normal bundle $N_C\ra C$, with $C$ mapping to the zero section and each $\ep$-ball around $b_i$ in  $B_i$ mapping into a linear subspace of the fiber $N_i$ over the points $p_i=\iota_C(x_i)$. 
One can then declare certain  $B$-constrained clusters to be ``elementary''.   In both of the examples below there is a simple, natural way of doing this.

\subsection{GV-formula for general symplectic 6-manifolds}

For any closed symplectic 6-manifold $X$,  the dimension (\ref{dim.M.cut})  is 
$$
\iota = 2c_1(A) +  \sum_{i=1}^k\  (2-\dim \ga_i), 
$$
independent of the genus.   For the  ``Calabi-Yau classes'' $A$ with $c_1(A)=0$ consider the GV-transform 
\bear\label{eq.GV.c1big-0}
\sum_{\substack{A\ne 0, g\\ c_1(A)=0}} GW_{A,g}\  q^A t^{2g-2}=
\sum_{\substack{A\ne 0, g\\ c_1(A)=0}} n_{A,g}  \sum_{k=1}^\infty \frac 1k \l( 2\sin \frac {kt}2\r)^{2g-2}  q^{kA}.
\eear
For ``Fano  classes'' $A$  with  $c_1(A)>0$ consider the following variation of the GV transform: 
\bear\label{eq.GV.c1big}
\sum_{\substack{A, g\\ c_1(A)>0}} GW_{A,g}(\ga_1, \cdots, \ga_k)\  q^A t^{2g-2}=
\sum_{\substack{A, g\\ c_1(A)>0}} n_{A,g}(\ga_1, \cdots, \ga_k) \l( 2\sin \frac t2\r)^{c_1(A)+ 2g-2}  t^{-c_1(A)}q^A
\eear
for each collection  $\{\ga_i\}\subset H^*(X, \Z)$.  The invariants $GW_{A,g}$ are zero for all  classes $A$ with $c_1(A)<0$ (the moduli space without constraints is empty for $J\in \J^*)$.

\begin{theorem}
\label{Theorem9.1} 
For a closed  symplectic 6-dimensional manifold $X$, the   coefficients of the  primary GW series \eqref{eq.GV.c1big-0}  and \eqref{eq.GV.c1big}  have the following integrality properties:
\best
n_{A, g}\in\Z \ \  \mbox{ if $c_1(A)=0$,  \qquad and \qquad $n_{A, g}(\ga_1, \cdots, \ga_k) \in \Z$ \ \   if $c_1(A)>0$}
\eest
 for all  $\ga_1, \dots, \ga_k\in H^*(X,\Z)$.
\end{theorem}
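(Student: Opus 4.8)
The proof separates the three ranges of the Chern number $c_1(A)=c_1(X)A$. For $c_1(A)<0$ there is nothing to prove: as noted after \eqref{eq.GV.c1big}, the relevant invariants vanish on dimension grounds for $J\in\J^*$, so the corresponding coefficients are zero. For $c_1(A)=0$ the plan is to run the proof of Theorem~\ref{Theorem8.1} verbatim, with Theorem~\ref{Theorem7.1} replaced by its general $6$-manifold version, Theorem~\ref{T.GW=creative.3-fold}; that theorem supplies integers $e_{A,g}(X)$, defined for $c_1(A)=0$, with $\sum_{c_1(A)=0}GW_{A,g}(X)\,t^{2g-2}q^A=\sum_{c_1(A)=0}e_{A,g}(X)\,GW^{elem}_g(q^A,t)$. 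Substituting the BPS expansion \eqref{8.GWelemseries} of $GW^{elem}_g$, whose coefficients $n_{d,h}(g)$ are integers by Theorem~\ref{theorem3.4}, and rearranging the (energy-finite) sums exactly as in the derivation of the universal formula \eqref{7.3}, exhibits each $n_{A,g}$ as a finite integral combination, hence $n_{A,g}\in\Z$.

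The Fano range $c_1(A)>0$ is where the constrained framework of this section enters, the unconstrained index $2c_1(A)$ now being positive. I would fix pseudocycles $B_i$ Poincar\'{e} dual to the $\gamma_i$, form the index-zero constrained universal moduli space $\oM_B(X)$ of \eqref{M.cut.2}, and first check the one hypothesis needed for the machinery of Sections~1, 2, 4, 5, 6 to carry over: that the underlying-curve map \eqref{curve.B} does not increase the formal dimension \eqref{dim.M.cut}. This is a short index computation. If a constrained map $f$ in class $A$ factors through a simple reduced map $\phi$ whose (possibly disconnected) image has components of class $B_j$ covered with multiplicity $d_j$, so $A=\sum_j d_j B_j$, then, using that the marked points of $f$ descend to marked points on the domain of $\phi$, that $\dim X=6$ makes \eqref{dim.M.cut} genus-independent, and that for $J\in\J^*$ every $J$-holomorphic curve component satisfies $c_1(X)B_j\ge0$, one obtains $\iota(f)-\iota(\phi)=\sum_j 2(d_j-1)\,c_1(X)B_j\ge0$, with equality exactly when every component of positive Chern number is covered once. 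Granting this, the $B$-constrained analogues of Proposition~\ref{P.cluster.decomp.exist} and of the Cluster Isotopy Theorem~\ref{CIT_6.1} decompose $GW^X(\gamma_1,\dots,\gamma_k)$ into integer-weighted contributions of $B$-constrained \emph{elementary} clusters, whose core is an embedded curve of some genus $g$ with $c_1(X)[C]>0$ carrying standardized normal-bundle and linear-constraint data.

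It remains to identify the contribution of a Fano elementary cluster, and this is the main obstacle. Unlike the Calabi-Yau case of Section~3, it is not a Bryan--Pandharipande residue integral; the cleanest route is to invoke Zinger's calculation in \cite{z}, which shows precisely that this universal series is the coefficient of $q^A$ on the right-hand side of \eqref{eq.GV.c1big}, carrying the normalization $\bigl(2\sin\tfrac t2\bigr)^{c_1(A)+2g-2}t^{-c_1(A)}$ and \emph{integer} coefficients. Combining these integer elementary contributions with the integer virtual counts of the elementary clusters, and rearranging as in the proof of Theorem~\ref{Theorem8.1}, yields $n_{A,g}(\gamma_1,\dots,\gamma_k)\in\Z$ for $c_1(A)>0$. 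Since the Fano part of the assertion coincides with Theorem~1.2 of \cite{z}, one may in fact simply cite that result; the only genuinely new point in the present treatment is the dimension inequality $\iota(\phi)\le\iota(f)$ above, which is what licenses running the cluster-decomposition and wall-crossing arguments of Sections~4--6 on the constrained moduli space $\oM_B(X)$.
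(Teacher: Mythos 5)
Your proposal is correct and, in substance, is the paper's argument: the trivial $c_1(A)<0$ case, the Calabi--Yau case by feeding Theorem~\ref{T.GW=creative.3-fold} into the rearrangement from the proof of Theorem~\ref{Theorem8.1} (with integer coefficients from Theorem~\ref{theorem3.4}), and the index identity $\iota(f)-\iota(\phi)=\sum_j 2(d_j-1)c_1(X)B_j\ge 0$, which is exactly the paper's computation showing that for $J\in\J^*$ a constrained map in a Fano class cannot be a nontrivial multiple cover, so the series splits into independent Calabi--Yau and Fano parts. The one place you diverge is your primary treatment of $c_1(A)>0$: the paper deliberately does \emph{not} run the constrained cluster machinery there, precisely because the no-multiple-cover conclusion makes it unnecessary --- each embedded constrained Fano curve is isolated and super-rigid for the constrained problem, and its full contribution (degree-one maps from nodal domains) is Zinger's universal series $\l(2\sin\frac t2\r)^{c_1(A)+2g-2}t^{-c_1(A)}q^C$, so integrality of the $n_{A,g}(\ga_1,\dots,\ga_k)$ is immediate without any isotopy to a standard model. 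Your route through ``$B$-constrained Fano elementary clusters'' and the analogues of Proposition~\ref{P.cluster.decomp.exist} and Theorem~\ref{CIT_6.1} would additionally require a Fano counterpart of Theorem~\ref{3.Junhocurves} (existence of standardized constrained super-rigid cores) and a constrained wall-crossing analysis, neither of which you supply and neither of which the paper needs; your stated fallback --- invoke \cite{z} directly, as in (1.13)--(1.14) there --- is exactly what the paper does, so the proposal stands, with the elaborate Fano detour best omitted.
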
    
\begin{proof} 
Fix $\ga=\{\ga_i\}$,  corresponding constraints $B=\{B_i\}$,  and a class $A\ne 0$ so that 
$$
 \iota=2c_1(A)+\sum (2-\dim \ga_i)
$$
 is zero. For each $J\in \J^*$, the resolution of each  $J$-holomorphic map $f$, factors as $\phi\circ \rho$ as described above, where $\phi:C_{red} \ra X$ is a simple $J$-holomorphic map.   For each component $\Si_i$ of $C_{red}$, let $A_i = \phi_*[\Si_i]\in H_2(X,\Z)$, and let $d_i\ge 1$ denote the degree of $\rho$ over $\Si_i$, that is,   the number of points in $\rho^{-1}(x)$ for a generic point $x\in \Sigma_i$.
Then $A=\sum d_i A_i$. 

Because $\phi:C_{red} \ra X$ is a simple $J$-holomorphic map, it cannot have any components with $c_1(A_i)<0$  (such moduli spaces are empty for $J\in \J^*$). Moreover, the image of $\phi$ passes through all the constraints and  represents $\ma\sum A_i$. 
Hence  
$$
c_1(\phi_*[C_{red}]) \ =\ \sum c_1(A_i) \ \le\  \sum d_i c_1(A_i) \ =\ c_1(A);
$$
 in fact this must be an equality because otherwise the formal dimension of the constrained moduli space containing $\phi$ would be negative, contradiction.   Thus for $J\in\J^*$,  a multiple cover map represents a Calabi-Yau class   if and only if its reduced curve also does, and represents a Fano class  if and only if   its reduction also does and $\deg \rho=1$.

Consequently,  the GW series separates into  two independent contributions: a sum over the Calabi-Yau classes, where  Theorem~\ref{T.GW=creative.3-fold}  applies, and  a sum over the Fano classes that  was studied by A.~Zinger  \cite{z}. Theorem~\ref{T.GW=creative.3-fold} combines with the proof of Theorem~\ref{Theorem8.1}  to give the integrality of $n_{A,g}$ in \eqref{eq.GV.c1big-0}.  The Fano case is much simpler:   there is no need to consider clusters because for  
$J\in\J^*$, every embedded $J$-holomorphic curve $C$ with $c_1(A)>0$ is isolated and super-rigid for the constrained moduli space, and the contribution of degree~1 maps from nodal curves to $C$ is precisely  
\best
GW(C)=\l( 2\sin \frac t2\r)^{c_1(A)+ 2g-2} t^{-c_1(A)} q^{C}
\eest
(see (1.13)  and (1.14) in \cite{z}).  This completes the proof.  
\end{proof}

\subsection{Genus zero invariants of  semipositive manifolds} 

There is a similar structure theorem for the rational (genus zero) GW invariants of  closed semipositive symplectic manifolds of  dimension $\ge 6$. In this context, the appropriate GV transform has two parts:
\begin{enumerate}
\item For $c_1(A)=0$, it is the Aspinwall-Morrison formula in the form given in equation (2) in \cite{k-p}:
\bear\label{GV.semi.1}
\sum_{\substack{A\ne 0\\\ 
c_1(A)=0}} GW^X_{A, 0} (\ga_1, \dots ,\ga_k)\  q^A\ =\  \sum_{\substack{A\ne 0\\ c_1(A)=0}} n^X_{A, 0} (\ga_1, \dots ,\ga_k) \sum_{d\ge 1} d^{k-3} q^{dA}
\eear
\item  For $c_1(A)>0$, it is \eqref{eq.GV.c1big} specialized to genus zero:
\bear\label{GV.semi.2}
GW^X_{A, 0}(\ga_1, \dots, \ga_k)\ =\ n^X_{A, 0}(\ga_1, \dots, \ga_k).
\eear
\end{enumerate}
As before, the invariants $GW_{A,g}$ are zero for all classes $A$ with $c_1(A)<0$ (since $X$ is semipositive, there are no simple $J$-holomorphic spheres with $c_1(A)<0$ for $J\in \J^*)$. 

\begin{theorem}
\label{theorem9.2}  
For a  closed semipositive symplectic  manifold $X$ of dimension at least 6, the coefficients  \eqref{GV.semi.1} and \eqref{GV.semi.2} of the primary genus zero GW series have the following integrality property:
\best
n_{A, 0}^X(\ga_1, \cdots, \ga_k) \in \Z
\eest
for all  $\ga_1, \cdots, \ga_k\in H^*(X,\Z)$.
\end{theorem}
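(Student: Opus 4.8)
\non\textbf{Proof proposal.} The plan is to transcribe the proof of Theorem~\ref{Theorem9.1} to the genus zero, $B$--constrained setting, the only new ingredient being that the r\^ole played there by the elementary series $GW^{elem}_g$ of Section~3 is now played by the Aspinwall--Morrison/Klemm--Pandharipande evaluation of the multiple--cover contribution of a constrained rational curve. Fix $\ga=\{\ga_i\}\subset H^*(X,\Z)$ with each $\dim\ga_i\ge2$, fix pseudo--cycle constraints $B=\{B_i\}$ Poincar\'e dual to the $\ga_i$, and restrict throughout to genus zero maps, so the relevant space is $\oM^J_{A,0,B}(X)=\ev^{-1}(B_1\ti\cdots\ti B_k)$; since $X$ is semipositive these $\iota=0$ constrained counts are the classical genus zero primary GW invariants (Chapter~6 of \cite{ms}). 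First I would check the single hypothesis under which the machinery of this section applies, namely that the underlying curve map \eqref{curve.B} does not raise the formal dimension \eqref{dim.M.cut}: for generic $J$ semipositivity forbids simple $J$--holomorphic spheres of negative Chern number, so in the factorization $f=\phi\circ\rho$ of a genus zero map every component of the reduction $\phi:C_{red}\to X$ carries a class $A_i$ with $c_1(A_i)\ge0$, and an index bookkeeping identical to the one in the proof of Theorem~\ref{Theorem9.1} shows that the formal dimension of the constrained moduli space through $\phi$ never exceeds that of the one through $f$. Granting this, everything in Sections~1--7 except Section~3 carries over verbatim to $\bigsqcup_{A}\oM_{A,0,B}(X)$: one obtains a dense path--connected set $\Ji^E(B)$, cluster decompositions as in Proposition~\ref{P.cluster.decomp.exist}, and a structure theorem of the shape of Theorem~\ref{Theorem7.1}.

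Next I would split the genus zero series into Calabi--Yau and Fano parts. Writing $A=\sum d_iA_i$ with $d_i=\deg(\rho|_{\Sigma_i})\ge1$ and $c_1(A_i)\ge0$, the chain $c_1(\phi_*[C_{red}])=\sum c_1(A_i)\le\sum d_ic_1(A_i)=c_1(A)$ must be an equality, since otherwise the constrained moduli space through $\phi$ would have negative formal dimension; hence for $J\in\J^*$ a cover represents a class with $c_1=0$ iff its reduction does, and a class with $c_1>0$ iff its reduction does and $\deg\rho=1$. Thus $\sum_{A\ne0,\,g}GW_{A,g}(\ga)q^At^{2g-2}$ separates into a sum over $c_1(A)=0$ and a sum over $c_1(A)>0$, with nothing from $c_1(A)<0$. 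The Fano part is precisely the $c_1(A)>0$ half of Theorem~\ref{Theorem9.1}, treated by Zinger \cite{z}; at genus zero his transform \eqref{eq.GV.c1big} specializes to \eqref{GV.semi.2} (the $t^{-2}$--coefficient of $(2\sin\tfrac t2)^{c_1(A)-2}t^{-c_1(A)}$ being $1$), so $n^X_{A,0}(\ga)=GW^X_{A,0}(\ga)\in\Z$.

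For the Calabi--Yau classes I would combine the structure theorem obtained above with the analogue of Section~3. The structure theorem gives
\[
\sum_{A\ne0,\,c_1(A)=0}GW_{A,0}(\ga)\,q^A\ =\ \sum_{B\ne0,\,c_1(B)=0}e_{B,0}(\ga)\cdot GW^{elem}(q^B)
\]
with \emph{integer} virtual counts $e_{B,0}(\ga)\in\Z$, where $GW^{elem}$ is the universal contribution of a $B$--constrained elementary cluster. It remains to identify this contribution: its core is a rational curve, and the multiple--cover integral over the covers of that curve, with the $k$ marked points constrained to the linear subspaces of the normal fibres, is exactly the Aspinwall--Morrison series $\sum_{d\ge1}d^{k-3}q^{dB}$ computed by Klemm and Pandharipande (equation~(2) of \cite{k-p}) --- for $k=0$ this recovers $\sum_d d^{-3}q^{dB}$, the $t^{-2}$--coefficient of the genus zero elementary series. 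Substituting,
\[
\sum_{A\ne0,\,c_1(A)=0}GW_{A,0}(\ga)\,q^A\ =\ \sum_{B\ne0,\,c_1(B)=0}e_{B,0}(\ga)\sum_{d\ge1}d^{k-3}q^{dB},
\]
which is visibly an expansion of the shape appearing on the right of \eqref{GV.semi.1}. Since that expansion is unique --- comparing coefficients of $q^A$ and inducting on the energy $\w(A)$ shows $n_{A,0}(\ga)$ is determined once it is known for all proper divisor classes of $A$ --- we conclude $n^X_{A,0}(\ga)=e_{A,0}(\ga)\in\Z$, which is the assertion.

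The step I expect to be the main obstacle is the verification in the first paragraph that \eqref{curve.B} does not raise the formal dimension \eqref{dim.M.cut}: one must track what happens when rational components of the domain are collapsed and the $k$ point constraints redistributed among the surviving components, and see that this never creates extra moduli. This is the one place where semipositivity and careful index bookkeeping are genuinely needed; everything downstream --- the cluster decomposition, the isotopy to elementary clusters of Section~6, and the final comparison with the transform \eqref{GV.semi.1} --- is a transcription of Sections~1--7 and of the proofs of Theorems~\ref{Theorem8.1} and \ref{Theorem9.1}, together with the cited Klemm--Pandharipande calculation.
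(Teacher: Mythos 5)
Your overall route---separating the genus zero series into Calabi--Yau and Fano parts, running the constrained cluster machinery of Sections~1--7, and evaluating a $B$-constrained elementary cluster by the Klemm--Pandharipande multiple-cover formula $\sum_d d^{k-3}q^d$---is the same as the paper's. But the step you yourself flag as the main obstacle is precisely where there is a genuine gap, and the substitute argument you give for it fails in dimension $>6$. You claim that $\sum_i c_1(A_i)<c_1(A)$ would force the constrained moduli space containing the reduction $\phi$ to have negative formal dimension. That is correct when $\dim X=6$ (the setting of Theorem~\ref{Theorem9.1}, where the index is independent of genus and of the number of components), but not here: if $C_{red}$ has $r$ components, the formal dimension of the constrained space of simple maps is $\sum_i\big[2c_1(A_i)+(\dim X-6)\big]+\sum_j(2-\dim \ga_j)$, and the extra $(r-1)(\dim X-6)>0$ contributed by the additional components can more than offset the deficit $2\sum_i c_1(A_i)-2c_1(A)$; so nothing in your bookkeeping forces negativity, nor the conclusions $r=1$ and $d_1=1$, nor the hypothesis that the underlying-curve map \eqref{curve.B} does not raise the formal dimension \eqref{dim.M.cut}.

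The missing idea is the paper's use of connectedness of the original domain: since the domain of $f$ is connected, the image of $\phi$ is connected, so its $r$ components meet in at least $r-1$ points, and for $J\in\J^*$ each such double point is a transversally cut condition of codimension $\dim X-4$ on simple maps (Lemma~\ref{LemmaA.1}(a)). Subtracting $(r-1)(\dim X-4)$ gives dimension at most $\iota-2(r-1)\le 0$, and nonemptiness forces equality, hence $r=1$ always and $d_1=1$ whenever $c_1(A)\ne 0$; this is what legitimizes the splitting into the two series \eqref{GV.semi.1} and \eqref{GV.semi.2}. Two smaller points: once this is in place, the genus-zero Fano case needs no appeal to Zinger (whose formula in Theorem~\ref{Theorem9.1} is a 6-dimensional statement)---with no multiple covers the constrained count of simple maps is classically an integer; and for the Calabi--Yau part you should specify the elementary model as the paper does (core a rational curve with normal bundle $\O(-1)\oplus\O(-1)\oplus(m-3)\O$, constraints $B_i$ linear subspaces of the normal fibers in general position), since it is the resulting super-rigidity---the constraints killing the kernel in the trivial directions---and not merely the Klemm--Pandharipande integral, that identifies the cluster contribution with $\pm\sum_{d\ge1}d^{k-3}q^{d}$.
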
 
\begin{proof}
Again fix $\ga$,  $B$,  and  $A$ so that   
$$
\iota= 2c_1(A)+ (\dim X-6)+\sum (2-\dim \ga_j)
$$
 is zero.   As before, assume $f$ is a multiple cover  with   reduced map $\phi$, and $A_i$, $d_i$ are degrees of its components.  If  the domain of $\phi$ has $r\ge 1$  components then its image has at least $r-1$ self-intersection points (since the domain of $f$ was connected). For $J\in \J^*$,  these impose $(r-1)(\dim X-4)$ transversely cut conditions on simple maps, so the dimension of the moduli space containing $\phi$ is  
$$
\ma\sum_{i=1}^r \big[2c_1(A_i) + (\dim X-6)\big]\ -\ (r-1)(\dim X-4) \ +\ \sum_{j=1}^k  (2-\dim \ga_j). 
$$
Since $A=\sum d_iA_i$ and $c_1(A_i)\ge 0$,  this is less than or equal to $\iota -2(r-1)\le \iota = 0$.   But  the moduli space is empty unless this is an equality, so we conclude that $r=1$ and $d_1=1$ whenever $c_1(A)\not= 0$.  Thus the GW series again separates into a sum of $c_1(A)=0$ classes and a sum over $c_1(A)>0$ classes.
 
 \smallskip

The Fano case \eqref{GV.semi.2} is classical: dimension counts imply that for generic $J$ the constrained moduli space consists only of simple maps, without any multiple cover.  Thus the GW invariant is an integer.  
   
    \smallskip
      
For Calabi-Yau classes $A$ in $X^{2m}$ we declare a $B$-constrained cluster to be elementary if its core $C$ is an embedded marked rational curve with normal bundle $N$ biholomorphic to $\O(-1)\oplus \O(-1)\oplus (m-3)\O$, and the constraints $B_i$ are linear subspaces of fibers $N_{f(x_i)}$ of $N$ in general position in the sense that the only holomorphic section of $N$ which intersects $B_i$ for every $i=1,\cdots, k$ is the zero section. (Note that because $\iota=0$, the sum of the codimensions of $B_i$ in $N_{f(x_i)}$ is $2(m-3)=\dim_\R X-6$.)
The core curve $C$  is then super-rigid (the constraints kill the kernel in the $\O$ directions) and, as proved in  \cite{k-p},  the contribution of its multiple covers to the primary $GW(\gamma_1, \dots, \gamma_k)$ invariant is 
\bear\label{9.last}
  \sign_{\! B} C\cdot  \sum_{d\ge 1} d^{k-3} q^{d C},
\eear
where $\sign_{\! B} C=\pm 1$ is the sign of the core curve $C$ as an element of the cutdown moduli space $\oM_{A, 0, B}$.  This sign can be explicitly calculated as the sign of the (transverse) intersection between the oriented linear subspace $B_1\ti \dots \ti B_k$ and the image of the evaluation  
map $\ev_{\bf x}: H^0(C, f^*N) \ra N_{f(x_1)}\ti \dots \ti N_{f(x_k)}$ on the space of holomorphic sections of $f^*N$.

 Now, with $\gamma$, $B$ and $E$ fixed, restrict attention  to the  cut-down moduli spaces $\oM_{A, 0, B}(X)$ for Calabi-Yau classes $A$ with energy at most $E$.  For  $J$ in the   set $\J^*$ constructed after \eqref{D.pi.cut},  we can decompose the fiber $\oM^{J}_{A, 0, B}$ into $B$-constrained  clusters  as in Proposition~\ref{P.cluster.decomp.exist}.    For each such cluster, the proof of    Theorem~\ref{CIT_6.1}  shows that one can isotop $J$ to a  elementary cluster of the above type;  the resulting formula \eqref{C.contrib} then becomes 
   \best
GW^E(\O)\ \approx\   \sign_{\! B} C \cdot GW^{elem,E} (q^C),
\eest
where the righthand side is the contribution \eqref{9.last}  expressed in terms of the formal power series   
  \best
GW^{elem}(q)=\sum_{d\ge 1} d^{k-3} q^d. 
\eest
Since the collection of series $GW^{elem}(q^A)$, for Calabi-Yau classes $A$, are linearly independent as in \eqref{expansionGWelem},   Theorem~\ref{Theorem7.1} also extends 
to the context of $B$-constrained clusters to express the lefthand side of \eqref{GV.semi.1} as a linear combination, with integer coefficients, of the series $GW^{elem}(q^A)$. Thus the coefficients on the righthand side of \eqref{GV.semi.1} are integers.

\end{proof}

 \vspace{5mm}
\setcounter{equation}{0}
\setcounter{section}{0}
\setcounter{theorem}{0}

\renewcommand{\theequation}{A.\arabic{equation}}
\renewcommand{\thetheorem}{A.\arabic{theorem}}
\appendix{}
\section*{A{\sc ppendix} A}
\bigskip

The proofs of Lemma~\ref{Lemma1.1}a and  Lemma~\ref{lemma4.7} were  deferred;    we give the details here and in the second appendix.   The proofs are applications of transversality and 
  the Sard-Smale Theorem.  While we are primarily interested in Calabi-Yau 6-manifolds,  Propositions~\ref{PropA.2} and \ref{PropA4}  below apply to symplectic manifolds $(X, \w)$ with $\dim X\ge 6$. 
 Lemmas~\ref{Lemma1.1}a  and \ref{lemma4.7} are special cases of  Corollary~\ref{corA.3} and  Proposition~\ref{PropA4}  respectively.

  As in the proof of Lemma~\ref{Lemma1.4}, every   $J$-holomorphic map $f:C\ra X$ lifts to a map $\wt f:\wt C\ra X$  from the normalization of $C$.  This lift
 is  $J$-holomorphic and has a smooth (but not necessarily connected) domain, so it suffices to work with such maps.  For every integer $\ell \ge 0$,  let  
 \bear\label{M.k.simple}
\M_{\ell,simple}\ra \J, 
\eear 
denote the  universal moduli space    generalizing \eqref{5.M.simple},  consisting of  equivalence classes of pairs $p=(f, J)$, where $J\in \J= \J^l$ and $f$ is a $W^{l,r}$ simple $J$-holomorphic map whose domain $C=(\Sigma, j, x_1, \dots, x_\ell)$ is  a smooth and compact  (but not necessarily connected)  complex curve with $\ell$ marked points. Write $\M_{0, simple}$ as $\M_{simple}$, and   let 
$$
{\mathcal{NE}}\subset \M_{simple}
$$
be the subset of the universal moduli space consisting of simple maps that are not embedded. Maps in ${\mathcal{NE}}$  
either (a) are not one-to-one, or (b) are not  immersions.  Correspondingly, consider two types of subsets of the moduli spaces \eqref{M.k.simple}: 

\smallskip

\begin{enumerate}[(a)]
\item For a pair of marked points, the inverse image of the diagonal  under the evaluation map 
\bear
\label{ev.2.pts}
\ev:\M_{2,simple} \longrightarrow X\times X
\eear
is the subset 
$$
\ev^{-1}(\De) \subset \M_{2,simple}
$$
consisting of simple $J$-holomorphic maps $f$ with smooth marked domain but whose image has a double point $f(x_1)=f(x_2)$.

\smallskip

\item For a single marked point $x_1$, there is a subset  ${\cal NI}$ of $\M_{1, simple}$   consisting of  simple 
$J$-holomorphic maps $f$ that are not immersions at  $x_1$. 
\end{enumerate}

\smallskip

 As in Section~5, we will  analyze these two subsets by regarding the moduli space $\M_{\ell,simple}$ locally  as a subspace of a slice as in \eqref{5.2bsliceslice}.  To describe ${\cal NI}$, let  $\L$ be the 
 complex line bundle over the slice whose fiber at $p=(f, J)$ is the cotangent line $T_{x_1}^* C$ to the complex curve $C$ at the marked point $x_1$, and let $\ev:\M_{1, simple} \to X$ be evaluation at $x_1$.  The
bundle
\bear
\label{ev.tg.pts}
\xymatrix{ \L\otimes_\cx \ev^* TX\ar[r] & 
\ar@ /_.8pc/ [l]_{\Phi_1}\M_{1, simple}} 
\eear
has a section $\Phi_1$ defined by  $\Phi_1(f,J)=(df)_{x_1}$; note that this lies in $(\L\otimes_\cx ev^*TX)_{p}=  T^*_{x_1} C \otimes_\cx T_{f(x_1)}X$ because $(df)_{x_1}:T_{x_1}C\to T_{f(x_1)}X$ is complex linear for $J$-holomorphic maps $f$.  The zero set of $\Phi^{-1}_1(0)$ is the set ${\cal NI}$ in (b).

\begin{lemma} 
\label{LemmaA.1} 
For each $\ell\ge 0$,  the moduli space \eqref{M.k.simple} is a  $C^k$ separable Banach manifold  for $k$ as in Proposition~\ref{5.1new}.  Furthermore, 
\begin{enumerate} [(a)]
\item The evaluation map \eqref{ev.2.pts} is  $C^k$ and is   transverse to the diagonal. \\[-3mm]
\item The section \eqref{ev.tg.pts} is  $C^k$ and is transverse to the zero section.
\end{enumerate}
\end{lemma}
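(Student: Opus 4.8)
The plan is to prove Lemma~\ref{LemmaA.1} by the standard universal-moduli-space transversality technique, exactly as in \cite[\S 3.2, \S 3.4]{ms}, adapted to allow disconnected domains and extra marked points. The key point throughout is that a simple $J$-holomorphic map has an injective point on every connected component of its domain (Micallef--White, cited after \eqref{1.Msimple}), and near an injective point $x$ one has complete freedom to vary $J$ in a small ball around $f(x)$.

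First I would establish that $\M_{n,simple}$ is a manifold. One works on the slice $\mathcal{Slice}_0$ of Section~4 (now with a disconnected domain $\Sigma=\bigsqcup \Sigma_\alpha$; the only change is that $\M_{g,n}$ is replaced by a product $\prod_\alpha \M_{g_\alpha,n_\alpha}$, which is harmless). The universal linearization at $(f,J)$ is $\L_{(f,J)}(\xi,K)=D_{(f,J)}\xi+\tfrac12 K\circ f_*\circ j$ as in \eqref{4.formulabigL}. One must show $\L$ is surjective: if $c\perp \mathrm{im}\,\L$ in $L^2$, then $D^*_{(f,J)}c=0$, and Lemma~\ref{lemma4.1kc} (which applies component-by-component since simpleness is per-component) gives an injective point $x$ with $c(x)\neq 0$; choosing $K$ supported near $f(x)$ with $K f_* j = c$ at $x$ and multiplying by a bump function gives a contradiction, exactly as in the proof of Theorem~\ref{MmfdThm}(a). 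Hence $\M_{n,simple}$ is a Banach manifold with tangent space $\ker\L$.

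Next, for part (a): the universal evaluation map $\ev\times\pi:\M_{2,simple}\times\J\to X\times X$, or more precisely the fiberwise evaluation $\ev:\M_{2,simple}\to X\times X$, is transverse to the diagonal $\Delta$. At a point $(f,x_1,x_2,J)$ with $f(x_1)=f(x_2)=:y$, transversality to $\Delta$ means the differential of $\ev$ surjects onto $T_y X\times T_y X/T\Delta \cong T_y X$; one needs to hit an arbitrary $v\in T_yX$ in the ``difference of the two evaluations'' direction. Because a simple map is injective at injective points and $x_1\neq x_2$ can be taken to be injective points mapping to distinct points $f(x_1)\neq f(x_2)$ on the dense open set of injective points where the images differ, one can vary $J$ (or vary the map through $\ker\L$) in a neighborhood of $f(x_1)$ disjoint from $f(x_2)$, moving $\ev_1$ freely while fixing $\ev_2$. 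This is the standard argument of \cite[Prop.~3.4.1 (or 6.2.6)]{ms}; the only subtlety, the diagonal self-intersection case where $f(x_1)=f(x_2)$ but both are injective points, is handled precisely because injective points force $x_1=x_2$, so such configurations do not occur and $\ev^{-1}(\Delta)$ lives over pairs with genuinely distinct branches.

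Then part (b): the section $\Phi(f,x)=(df)_x$ of $L\otimes_\cx \ev^*TX$ over $\M_{1,simple}$ is transverse to the zero section. At a zero $(f,x)$ (so $df_x=0$) one computes the vertical derivative of $\Phi$ in the directions tangent to $\M_{1,simple}$; varying $x$ contributes the second derivative $\nabla df_x$ (the Hessian of $f$ at the critical point, which is well-defined there), and varying $(f,J)$ through $\ker\L$ contributes $\nabla(\delta f)_x$ plus a term from $\delta J$. Using a variation of $J$ or of $f$ supported near $f(x)$ — again legitimate since $x$ is an injective point (a critical point of a simple map is still an injective point after possibly moving it slightly, or one argues as in \cite[Lemma~6.2.7]{ms}) — one can prescribe the $1$-jet of $\delta f$ at $x$ arbitrarily, hence make $\nabla(\delta f)_x$ equal to any prescribed element of $T^*_xC\otimes_\cx T_{f(x)}X$. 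This surjects onto the fiber, giving transversality. I would cite \cite[\S 3.4]{ms} for the detailed computation rather than reproduce it.

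The main obstacle — really the only non-bookkeeping point — is the injectivity-point input in parts (a) and (b): one must be careful that, although a simple map need not be an embedding and need not be an immersion a priori, its injective points still form an open dense set on each component (Micallef--White), and that at such points the linearized operator together with $J$-variations supported near the image point is surjective onto the relevant jet space. Once that is set up component by component, everything reduces verbatim to the arguments in \cite{ms}, and the disconnectedness of the domain and the presence of $n$ marked points cause no difficulty. I would therefore keep the write-up short, stating the surjectivity of $\L$ carefully and then referring to \cite[\S 3.2, \S 3.4]{ms} for the transversality computations in (a) and (b).
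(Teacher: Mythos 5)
Your overall framework (slice, universal linearization $\L$, surjectivity via Lemma~\ref{lemma4.1kc}, transversality of $\ev$ and $\Phi$ by varying $J$ near the image) is the same as the paper's, and the manifold statement is fine. But the step you yourself single out as the only non-bookkeeping point is where the argument breaks, in part (b). At a zero $(f,x)$ of $\Phi$ one has $df_x=0$, so by the paper's definition (and the standard one) $x$ is \emph{not} an injective point; the parenthetical fix ``a critical point of a simple map is still an injective point after possibly moving it slightly'' does not help, because transversality must be verified at the zero itself, not at a nearby point. Moreover, since $df_x=0$, the $J$-variation term $\tfrac12\, K\circ f_*\circ j$ in \eqref{4.formulabigL} vanishes at $x$, so a variation of $J$ alone cannot prescribe the $1$-jet of the deformation at $x$, contrary to your claim. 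The paper's proof handles this by invoking Theorem~E.1.2 of \cite{ms}: there is a punctured neighborhood $U^*=U\setminus\{x\}$ on which $f$ is an embedding and which consists of injective points. One then produces an element $(\zeta,0,K)\in\ker\L_p$ with prescribed $1$-jet $\zeta(z)=zv+O(|z|^2)$ at $x$ by the cutoff-and-correct scheme of Lemma~3.4.4 of \cite{ms}: take a local solution $\zeta_0$ of $D_p(\zeta_0,0)=0$ with that $1$-jet, set $\zeta=\beta\zeta_0$, and absorb the error $-D_p(\zeta,0)$, which is supported where $d\beta\ne0$, i.e.\ on injective points of $U^*$, into a $K$ supported near the image (via the formula in Lemma~3.2.2 of \cite{ms}). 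Your write-up needs this argument, or an equivalent one, in place of the injective-point claim.

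Part (a) has a related problem of logic. Transversality of $\ev$ to the diagonal only needs to be, and only can be, checked at points of $\ev^{-1}(\Delta)$, i.e.\ where $f(x_1)=f(x_2)$ with $x_1\ne x_2$; at such double points neither $x_i$ is an injective point, and your device of varying $J$ ``in a neighborhood of $f(x_1)$ disjoint from $f(x_2)$'' is unavailable because the two image points coincide. Your remark that such configurations ``do not occur'' is backwards: they constitute exactly the locus where the transversality statement has content. The paper simply cites Proposition~3.4.2 of \cite{ms}, whose proof prescribes the values of the variation $\zeta$ at the marked points and corrects with a $K$ supported near injective points elsewhere on each component; if you wish to sketch (a) rather than cite it, that is the argument to sketch.
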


\begin{proof} 

    Proposition~\ref{5.1new} extends to show that $\M_{\ell,simple}$ is a separable  Banach manifold, locally $C^k$ diffeomorphic to a  $C^{l-m}$ submanifold $M_{\ell, simple}$ of a slice ${\cal Slice}_\tau^{m,r,l}$ for $l\ge 6$, $r>2$, $2k\le l-2$, and $1\le m \le l-k$.  Note that the section \eqref{ev.tg.pts} extends over the slice by the formula
\bear\label{A.lasteqofjob}
 \Phi_1(f,J)\ =\ \tfrac12(df-Jdf j)(x_1),
\eear
 which is equal to $df(x_1)$ if $f$ is $(j,J)$-holomorphic.  The Sobolev embedding theorem implies that, on the slice, the evaluation map defined by $ev(f,J)=(f(x_1), f(x_2))$ is smooth for $m\ge 1$, and that the extension \eqref{A.lasteqofjob} is  $C^{l-m}$  for $m \ge 2$.  Consequently, the map \eqref{ev.2.pts} and the  section $\Phi_1$ in \eqref{ev.tg.pts} are $C^k$  for $k$ in the above range provided that $m\ge 2$. 

   Statement (a)  is true by Proposition~3.4.2 of \cite{ms}.  
  To prove (b), we modify the proof of Lemma~3.4.3 in \cite{ms}.   This involves three steps:  (i) computing $d\Phi_1$ in certain directions, (ii) expressing the needed transversality as a differential equation with constraints on  1-jets (not just  values) at the marked point, and (iii) solving this equation using weighted Sobolev spaces.  

\smallskip
 
 Fix a slice as above with $m\ge 2$ and a point $p=(f,J)$ on the zero set of $\Phi_1$, so $df(x_1)=0$. Consider a variation $p_t=(f_t, J_t)$ of $p=p_0$ in  $\M_{1, simple}$ that fixes the domain (including the complex structure and the marked point $x_1$), and also fixes the image 
point $f(x_1)$. Then  $f_t:C\ra X$ is $(j, J_t)$-holomorphic, and $f_t(x_1)=f(x_1)$ is constant. The tangent vector to $p_t$ at $t=0$ then has the form $(\zeta, 0, K)$,  where 
\bear\label{cond.zeta.K}
\zeta\in W^{m, r}(f^*TX), \quad \zeta(x_1)=0, \quad K\in T_J\J^l, \quad \mbox{ and }\quad  \L_p (\zeta, 0, K)=0. 
\eear 
Here $\L_p$ is the linearization given by \eqref{4.formulabigL} and \eqref{1.formulaL}.  Calculating the first variation of 
 \best
\Phi_1(p_t)= \tfrac12(df_t-J_tdf_tj)(x_1) \in (\Lambda_C^{1, 0}\otimes_\cx f_t^* TX)_{x_1}
\eest
 as in \cite[Proposition 3.1.1]{ms} (with a sign change), and  using the fact that $df(x_1)=0$, one finds that
\bear\label{A.dPhi}
 (d\Phi_1)_p(\zeta, 0, K)\ =\  \tfrac12\left(\nabla \zeta -J\nabla\zeta \circ j  + (\nabla_\zeta J )df j+K df j\right)(x_1)\ =\ (\nabla \zeta)^{1, 0}(x_1).
\eear
The righthand side is independent of the connection because $\zeta$ vanishes at $x_1$.

 To prove (b), it suffices to show that \eqref{A.dPhi} is surjective, i.e. for each $\eta_0\in (\Lambda_C^{1, 0}\otimes_\cx f^* TX)_{x_1}$ there exists a tuple $(\zeta, 0, K)$ satisfying \eqref{cond.zeta.K} and such that 
\bear\label{dfphi=eta}
 (\nabla \zeta)^{1, 0}(x_1)=\eta_0 . 
\eear

Choose a local holomorphic coordinate $z$ on $C$ centered at $x_1$, and write $\eta_0=v_0 dz$, where $v_0\in (f^*TX)_{x_1}=T_{f(x_1)}X$.  Extending $v_0$ to a smooth section of $TX$, pulling back by $f$, and multiplying by a bump function creates a $W^{l,r}$ section $v$ of  $f^*TX$ supported in the coordinate chart with $v(x_1)=v_0$.  
We will seek a solution $(\zeta, 0, K)$ of  \eqref{cond.zeta.K}  and \eqref{dfphi=eta} satisfying
\best
\zeta= z v+\mu, 
\eest
 where $ \mu\in W^{l, r}(f^*TX)$ is a correction satisfying 
\bear\label{cond.mu}
\mu(x_1)=(\nabla \mu)(x_1)=0.
\eear
This ansatz implies that $\zeta\in W^{l, r}$, $\zeta(x_1)=0$ and 
$$
(\nabla \zeta)^{0,1}(x_1)=\big(\partial z\cdot v + z(\nabla v)^{0,1}\big)\big|_{z=0}= v_0 dz=\eta_0.
$$ 
The only  remaining constraint is the last equation in \eqref{cond.zeta.K}, which reduces to 
\bear\label{A.1eq1}
\L_p(\mu, 0, K)\ =\ D^0_p\mu +\tfrac12 K df j\ =\ \alpha \quad \mbox{ for } \quad  \alpha = -D^0_p(zv).
\eear
Using the formula \eqref{D0.formula.lin} for  $D^0_p$,  one sees that $\alpha$ is bounded.  
Furthermore, $|\alpha(z)| =O(|z|)$ for small $z$, as follows.
As in \cite[3.1.5]{ms}, we can write $D_p^0$ as the sum of a first order complex-linear operator $D_p^{0,1}$, and a complex anti-linear zeroth order operator $R$ given in terms of the Nijenhuis tensor of $J$ by $(R\zeta)(w) = \tfrac14 N_J(\zeta, \partial f(w))$.  The calculation
$$
D_p^0(zv)\,=\, D_p^{0,1}(zv) + R(zv)\,=\, zD_p^{0,1}v +\ov{z} Rv\,=\, zD_p^0v + (\ov{z}-z)Rv
$$
then implies  that $\alpha$ is $O(|z|)$.

Because $f$ is simple, we can now use weighted Sobolev spaces to find a solution $(\mu, K)\in W^{1,r}(f^*TX)\oplus T_J\J^l$ of  equation \eqref{A.1eq1} that satisfies \eqref{cond.mu}.   The appropriate weighted spaces are defined below, and the needed facts  listed in Lemmas~\ref{FirstNormLemma} and \ref{weightedLsurjective}.  Using the results and  notation of those lemmas, the proof of (b) is completed as follows.

Fix $\delta\notin \Omega_D$ with $1<\delta<2$,  and define $r>2$  by $\delta=2-2/r$.   Since $C$ is compact and $\alpha$ is bounded and $O(|z|)$, one sees that  $\alpha$ is  in $\F^{0,r}$, and hence is in $W^{0, r, \delta}$  by  Lemma~\ref{FirstNormLemma}a.   Because $f$ is simple, Lemma~\ref{weightedLsurjective}c  shows the existence of a solution $(\mu,0, K)\in W^{1, r, \de}\times \{0\}\times T_J\J^{l}$ to \eqref{A.1eq1}.  This $\mu$ satisfies    $D_p(\mu, 0)=\alpha-\tfrac12 K df j$ on $C'$,  and  both  $\alpha=-D_p(zv,0)$ and $Kdfj=2 \L_p(0,0,K)$ are in $\F^{l-1,r}$, as is seen by taking $m=l$ in  \eqref{def.dl.comp}.  But then Lemma~\ref{FirstNormLemma}b  shows that $(\mu, 0)$ lies in $\E^{l,r}$  and satisfies \eqref{cond.mu} and  \eqref{A.1eq1}.  This completes the proof that $(d\Phi_1)_p$ is surjective. 
\end{proof}

 The  weighted Sobolev spaces used at the end of the above proof are defined as follows. Fix a local  holomorphic coordinate $z:U\to \cx$ with origin at $x_1$, 
 and a
 Riemannian metric $g_0$ on $C$ that is euclidean on $U$.  Also fix   a smooth positive function $\rho$ on $C'=C\setminus\{x_1\}$ that is equal to $|z|$ on $U\setminus\{x_1\}$, and a constant $\delta\in \R$.  Let $g'$ be the metric $\rho^{-2}g_0$ on $C'$.  Writing $z=e^{-(t+i\theta)}$  gives coordinates $(t,\theta)$  with
\bear\label{A.rhozet}
 \rho = |z| = e^{-t}
\eear
 and  $g'= dt^2+d\theta^2$, so $(C', g')$ is a manifold with an end  $C'_{end}$  isometric to the cylinder $[0,\infty)\times S^1$, where $S^1=\R/2\pi\Z$. 
Let 
 $$
\E_0^{1,r,\delta} = W^{1, r, \delta}(f^*TX)
 $$
be the completion of the set of $C^l$  sections of $f^*TX$ with  compact support on $C'$ in the norm
\bear\label{A.Defweightednorm}
 \|\xi\|_{1,r,\delta}^r\ =\    \int_{C'}\, |\rho^{-\delta}  \nabla\xi |^r+ |\rho^{-\delta}  \xi |^r\ \  dvol_{g'},
\eear
 using  the norm and connection on the bundle  $f^*TX$ induced by the metric $g'$ on $C'$ and the metric on $X$.   The spaces  $\E_0^{0,r,\delta}$ and $\F^{0,r,\delta}$ are defined similarly (cf. \eqref{def.ef.comp}), also using the metrics $g'$ on $C'$ and the metric on $X$.

 \medskip

 The following lemma gives ways to translate between these weighted spaces,  which are defined using  the metric $g'$ and its Levi-Civita connection $\nabla'$ on $C'$, and the 
unweighted spaces $\E^{m,r}$  and $\F^{m,r}$, which were defined in Section~\ref{subsection4.2}  using the metric $g_0$ and connection $\nabla$ on $C$.  Part (b) is a regularity result for the   operator $D^0_p$   defined by \eqref{D0.formula.lin}.

  \begin{lemma}
 \label{FirstNormLemma}
 Fix $p=(f,J)$,  where $f\in W^{l,r}$ is $J$-holomorphic,  $J\in \J^l$, $l\ge 6$, and $r>2$.  Then
 \begin{enumerate}[(a)]

 \item   If   $\alpha\in \F^{0,r}$ is a 1-form with   $|\alpha(z)| =O(|z|)$,  then $\alpha\in \F^{0,r,\delta}$ for all $\delta\le 2-2/r$.\\[-3mm]
 \item  If $\mu\in\E_0^{1,r,\delta}$, $\delta>1$, is a  weak solution of $D^0_p\mu=\alpha$ on $C\setminus\{x_1\}$ with $\alpha\in \F^{l-1,r}$, then $\mu$   extends to a solution on $C$ with $(\mu, 0)\in\E^{l,r}$, and with $\mu(0)=(\nabla\mu)(0)=0$. 
 
\end{enumerate}
 \end{lemma}

 \begin{proof}
 
 (a)   For a 1-form $\alpha$, the norms with respect to the metrics $g_0$ and  $g'=\rho^{-2} g_0$ are related by  $|\alpha|_{g'} = \rho |\alpha|_{g_0}$, while $dvol_{g'}= \rho^{-2} dvol_{g_0}$.   Hence
 $$
 \int_{C'}    |\rho^{-\delta}\alpha|^r_{g'}\ dvol_{g'}\ =\  \int_{C'}   |\rho^{1-\delta-2/r}\alpha|^r_{g_0}\ dvol_{g_0}.
 $$
 The righthand integral is finite for $\delta\le 2-2/r$ because  $\alpha\in \F^{0,r}$ and, by assumption,  $|\alpha|_{g_0}\le c_1\rho=c_1 e^{-t}$ on $C'_{end}$.

 \smallskip
 
   (b)  The pointwise norm of a section of $f^*TX$ does not depend on the metric on the domain.  Integrating $|\mu|^r\ dvol_{g_0} = \rho^{2+\delta r}\, |\rho^{-\delta}\mu|^r \ dvol_{g'}$, and again noting that $\rho$ is bounded and  is equal to $|z|$ on the end, shows that $\mu\in W^{0,r}$ and that,   for small $\ep$,  the integral over the disk $B(\ep)$ centered at $x_1$ satisfies
$$
\int_{B(\ep)} |\mu|^r\ dvol_{g_0}\ \le\  \ep^{2+\delta r}\, \|\mu\|^r_{0,r,\delta}  \ \le\ c_2\, \ep^{2+\delta r}.
$$
H\"{o}lder's inequality then shows that  for any $s\le r$,    $\mu\in W^{0,s}$  and there is a constant $c_3=c_3(s)$ such that
\bear\label{A.wtedgrowthrate}
\int_{B(\ep)} |\mu|^s\ dvol_{g_0}\ \le\ c_3\, \ep^{2+\delta s}.
\eear

 To apply elliptic regularity, we first verify that $D^0_p\mu=\alpha$ weakly on all of  $C$.  Choose a smooth 1-parameter family of cutoff functions $\{\gamma_\ep\}$ supported on $B(\ep)$ with $0\le\gamma_\ep\le 1$ and  $|d\gamma_\ep|<4/\ep$.  Given any  $\eta\in \F^{l,r}$,    write $\eta=\gamma_\ep\eta+(1-\gamma_\ep)\eta$     and  integrate:
\bear\label{A.weaksolution}
\int_C \langle (D_p^0)^*\eta, \mu\rangle-\langle\eta,\alpha\rangle\ = \  \int_{B(\ep)}  \langle (D_p^0)^*(\gamma_\ep\eta), \mu\rangle -  \int_{B(\ep)}   \langle \gamma_\ep \eta,\alpha\rangle  +  \int_C \langle (1-\gamma_\ep) \eta,D^0_p\mu-\alpha\rangle.
\eear
The last  integral vanishes because $D^0_p\mu=\alpha$ weakly on $C\setminus\{x_1\}$.  Noting that $\eta$ is bounded by the Sobolev Embedding Theorem,   H\"{o}lder's inequality  shows that the absolute value of the  middle integral on the righthand side is bounded by $\|\eta\|_{\infty}\, \|\alpha\|_{0,r}\, \|\gamma_\ep\|_{0,s; B(\ep)}\le c_4\ep^{2/s}$, where $\frac{1}{s}=1-\frac{1}{r}$. For $\ep \le 1$, the  first integral on the right  is similarly bounded by 
$$
\int_{B(\ep)} \left( |d\gamma_\ep| \, |\eta| + \gamma_\ep |(D_p^0)^*\eta| \right)\,|\mu|\
  \ \le\  \left( \|d\gamma_\ep\|_{0,r;B(\ep)} \|\eta\|_\infty    + \|(D_p^0)^*\eta\|_{0,r}\right)\, \|\mu\|_{0,s; B(\ep)} 
\ \le\  c_5\,  \ep^{1+\delta},
$$
where this last inequality  follows from  \eqref{A.wtedgrowthrate}  because  $\|d\gamma_\ep\|_{0,r;B(\ep)} \le c_6\ep^{2/r-1}$ and $\|(D^0_p)^*\eta\|_{0,r}\le c_7 \|\eta\|_{1,r}$  as in \cite[Proposition~3.1.11]{ms}.   These bounds hold for all small  $\ep>0$, so the lefthand side of \eqref{A.weaksolution} is equal to 0.  Thus  $D_p\mu=\alpha$ weakly on   $C$.  

Elliptic regularity, as in Theorem~C.2.3 of \cite{ms}, then shows  that $\mu \in  W^{l,r}$ and $D_p^0\mu=\alpha$ on $C$.  In particular, $\mu$ is $C^2$ by the  Sobolev Embedding Theorem.  With this,    \eqref{A.wtedgrowthrate} and the hypothesis that $\delta>1$    imply  that $\mu(0)=(\nabla\mu)(0)=0$.
 \end{proof}

  \begin{lemma}
 \label{weightedLsurjective}
 Fix $p=(f, J)$ as in Lemma~\ref{FirstNormLemma}.    Then there is a discrete set $\Omega_D\subset \R$ such that for each $\delta\notin \Omega_D$,  
   \begin{enumerate}[(a)]
   \item The  operator $D^0_p$   defined by \eqref{D0.formula.lin} on $C^l$ sections  with compact support in $C\setminus\{x_1\}$ extends to a  Fredholm operator
\bear\label{A.SobolevD}
D^0_p:\E_0^{1,r,\delta}\to \F^{0,r,\delta}.
\eear
 \item  If $df(x_1)=0$ and $\delta<2$, then  the operator  $\L_p$ defined by    \eqref{4.formulabigL} and \eqref{1.formulaL}, restricted to the subspace defined by
 $k=0$,   induces  a bounded operator  
\bear\label{A.weightedSobolevL}
\L_p:W^{1,r,\delta}(f^*TX) \oplus \{0\} \oplus T_J\J^m\to \F^{0,r,\delta}.
\eear
\item  If, in addition,  $f$ is simple, then   \eqref{A.weightedSobolevL} is surjective.
\end{enumerate}
 \end{lemma}

\begin{proof}

(a)  The Fredholm properties of the operator \eqref{A.SobolevD} are determined by its asymptotic behavior in a neighborhood of $x_1$, which depends on the geometry of $X$ near $f(x_1)$. Choose a local trivialization of the complex vector bundle $(TX, J)$ in  a neighborhood $V$ of $f(x_1)$. Our assumptions imply that $f$ is of class $C^{l}$ so, after shrinking $C'_{end}$, we can assume that $f(C'_{end})$ lies in $V$. Pulling back yields a $C^{l-1}$  trivialization $f^*TX \cong  C'_{end}\times \R^{2N}$ of  $f^*TX$ over the end $C'_{end}$ in which $J$ corresponds to 
 the standard complex structure $J_0$ on $\cx^N=\R^{2N}$.  This, together with the section $d\overline{z}$ of $\Lambda^{0,1}_C$, gives a similar trivialization of $ \Lambda^{0,1}_C\otimes_\cx f^*TX$ on the end.

 Referring to  formula \eqref{D0.formula.lin} and noting that $f$ is $J$-holomorphic, we can write $D^0_p\zeta$ in these trivializations as
 \bear\label{A.3.1}
D^0_p\zeta\ =\ \del_0\zeta \ +\        S \cdot \zeta\  d\overline{z},
\eear
where $\del_0\zeta  =\frac12(d\zeta+J_0d\zeta j)$ and $S$ is a matrix-valued function depending on  the pullbacks of $J$, $\nabla J$, and the  connection form  of  $\nabla$ in the trivialization.  The specific formula for  $S$ shows that it is  at least $C^{l-1}$.

After converting   to $(t,\theta)$ coordinates on $C'_{end}$ and substituting $d\overline{z} = - \overline{z} (dt-id\theta)$, 
 \eqref{A.3.1} becomes
 \bear\label{A.Ddel0+R}
D^0_p\zeta\ =\    \del_0\zeta  +  T\, \zeta,
\eear
where  $\del_0\zeta = \frac12\big(\partial_t\zeta+J_0\partial_\theta\zeta \big)$  and $T = -e^{-t+i\theta}S$.  Since $S$ is bounded on $C$, we have $|T| \le c_1\, e^{-t}$. 
Thus $D^0_p$ is a first order elliptic operator with $C^{l-1}$ coefficients that, in some trivialization on  the end  of $C'$,    is the sum of the translation invariant operator $\del_{0}$ and a $0^{th}$ order  term $T$ that decays to 0 uniformly as $t\to \infty$.   A theorem of Lockhart and McOwen \cite[Theorem 6.2]{LM} then implies that \eqref{A.SobolevD} is bounded and Fredholm for all $\delta$ not in a discrete set $\Omega_D$ (the proof assumes that $D_p$ has smooth coefficients, but applies without change for coefficients that are $C^{2}$ or better).

\medskip

(b)  Using (a), it suffices to bound  the last term in \eqref{4.formulabigL}.  Because $f$ is $C^2$, the assumption that $df(x_1)=0$ implies that $|df(z)|_{g_0}\le c_2 |z|=c_2\rho$  on the end, and hence by compactness there is a constant $c_3$ such that $|df|_{g'}\le c_3\rho^2$ on all of $C'$.  We then have:
$$
\|K df j\|_{0,r,\delta}\ \le\  \|K\|_{C^0} \left(\int_{C'} |\rho^{-\delta}\cdot c_2\rho^2|^r\ dvol_{g'}\right)^{1/r} \le\ c_4\, \|K\|_{C^l},
$$
where the last inequality holds because   $\rho=e^{-t}$ on the end of $C'$ and $\delta<2$.

\medskip

(c)  Following the argument used to prove Proposition~\ref{5.1new},  if $\L_p$ is not surjective, then there exists an element $c$ of the dual space $(\F^{0,r,\delta})^*=\F^{0,s,-\delta}$, $s=\frac{r}{r-1}>1$,   such that $(D^0_p)^*c=0$ and \eqref{new5.1} holds for every   $K$ in $T_J\J^l$.    The proof of Lemma~\ref{lemma4.1kc}, applied on $C'$, shows that   there is an injective point $x\in C'$ such that  $c(x)\not= 0$.  Then $K=\beta_\ep K_0$, as defined after \eqref{eq.beta.delta},  has compact support in a neighborhood of this point $x\in C'$ and  lies in $T_J\J^l$,  giving a contradiction as $\ep\to 0$.  Thus \eqref{A.weightedSobolevL} is  surjective.
\end{proof}

\bigskip

Next, note that for every $\ell\ge 0$ the map 
 \bear\label{A.pi.ell}
\pi_\ell: \M_{\ell, simple}\to \M_{simple}
\eear
that forgets the marked points is a submersion with $\ind \pi_\ell=2\ell$.

\begin{prop} 
\label{PropA.2}
 The set  $\mathcal{NE}\subset \M_{simple}$ of simple maps that are not embeddings has codimension $\dim X-4$ in the sense of  Definition~\ref{4.defcodimkset}.
\end{prop}
\begin{proof}
As above, $\mathcal{NE}$ is the union of  $\pi_1(\iota_1S^1)$ and $\pi_2(\iota_2 S^2)$,  where $\iota_1: S^1= \Phi_1^{-1}(0) \hookrightarrow \M_{1, simple}$ and $\iota_2:S^2=\ev^{-1}(\De)  \hookrightarrow\M_{2, simple}$.  Lemma~\ref{LemmaA.1} implies that $\iota_1$ and $\iota_2$ are inclusions of submanifolds with $\ind \iota_\ell= - \codim S^\ell$  for $\ell =1, 2$.  One sees from \eqref{ev.2.pts} and \eqref{ev.tg.pts} that   $S^1$ and $S^2$ both have codimension $\dim X$.  Hence $\mathcal{NE}$ is a set of codimension~$k$ where $k=-\ind (\pi_\ell\circ \iota_\ell) =-\ind \pi_\ell- \ind \iota_\ell  = -2\ell+ \dim X\ge \dim X-4$.   
\end{proof}

\begin{cor}
\label{corA.3}
If $X$ is a symplectic Calabi-Yau 6-manifold,  there is a Baire set $\J^*\subset \J^l$,  $l\ge 6$ or $l=\infty$,    so that for each $J\in \J^*$ all simple $J$-holomorphic maps are  regular, and are  embeddings with pairwise disjoint images. 
\end{cor}

\begin{proof}
 For $\J=\J^l$, the Sard-Smale Theorem implies that the regular values of   the projection $\pi:\M_{simple}\to \J$ are a Baire set $\J_1$ in $\J$.     When $X$ is a Calabi-Yau 6-manifold, the index of $\pi$ is 0, and  Proposition~\ref{PropA.2} shows that    $\mathcal{NE}$ is a codimension~2 subset of  $\M_{simple}$.  Applying  Sard-Smale again,  there is a Baire set $\J_2$ of $\J$ such that for each $J\in \J^* = \J_1\cap \J_2$, $J$ is a regular value  of $\pi$ and   $\pi^{-1}(J)$ is disjoint from  $\mathcal{NE}$,  which means that all simple $J$-holomorphic curves are embedded.

This proof extends to the space of smooth maps over  $\J=\J^\infty$ by applying Taubes' argument, as in the proof Theorem~3.1.6(II) in \cite{ms}.
\end{proof}

 \vspace{5mm}

\renewcommand{\theequation}{B.\arabic{equation}}
\renewcommand{\thetheorem}{B.\arabic{theorem}}
\section*{A{\sc ppendix} B}
\bigskip

 This second  appendix is devoted to the proof of the following result,  which immediately implies Lemma~\ref{lemma4.7},  and also generalizes parts  (b) and (c) of Proposition~\ref{MmfdThm}.  As in Appendix~A,  our moduli spaces and operators $D_p$ are defined on the  Sobolev completions introduced in Section~\ref{subsection4.2}, but for notational simplicity we omit the superscripts indicating the Sobolev norms.

\begin{prop} 
\label{PropA4} 
Suppose $\dim X\ge 6$ and $\N$ is a component of $\M_{simple}$  such that the projection $\pi:\N \ra \J$ has index 0. Then 
\begin{enumerate}[(a)]\itemsep2mm 
\item $\W^1\cap \N$ is a codimension~$1$ submanifold of $\N$, and 
\item $(\W\setminus \W^1)\cap \N$ is a subset of $\N$ of codimension $\ge 3$.   
\end{enumerate}
Furthermore, (a) and (b)  hold with $\N$ replaced by  $\N\cap \M_C$.  
\end{prop}

The proof is based on another construction involving the spaces \eqref{M.k.simple}   and the projections \eqref{A.pi.ell}.  Below, we will locally regard $\M_{\ell, simple}$ as a subset of a slice, and  write its elements  as pairs $q=(p, {\bf x})$,  where $p=\pi_\ell(q)=(f, J)\in \M_{simple}$ and ${\bf x}=(x_1, \dots, x_\ell)$ are the marked points on the domain $C$ of $f$.  For each $\ell\ge 0$, let 
\best
\M^*_{\ell, simple} \subseteq \M_{\ell, simple} 
\eest
be the open set of all   $q$ such that each of the marked points $x_1, \dots, x_\ell$  is an injective point of $f$.   
 Pull back the bundles $\E$ and $\F$ to $\M_{\ell, simple}^*$ by the   map between slices corresponding to \eqref{A.pi.ell}.  Let $\E^\ell\ra \M^*_{\ell, simple}$ be the subbundle of $\pi_\ell ^*\E$ whose fiber at  $q=(p, \bf x)$ is the set 
\best
\E_q^\ell  = \left\{ \; \xi \in \pi_\ell^*\E_p \;|\; \xi^N(x_i)=0 \mbox { for $i=1, \dots, \ell$ }\right\}
\eest
of  elements of $\pi_\ell^*\E_p$  whose normal component vanishes at $x_1, \dots,x_\ell$.  For each $q=(p, {\bf x})\in  \M_{\ell, simple}^*$,  
$\E_q^\ell$ is a linear subspace of $\pi_\ell^*\E_p$ of codimension $\ell(\dim X-2)$, and  the linearization $D_p$  at $p$, given in \eqref{1.formulaL},  restricts to a linear map
\bear
\label{A.EF}
D^\ell_q: \E^\ell_q \to \pi_\ell^* \F_p.
\eear
Regarding  $D^\ell_q$ as the composition of the inclusion $\E^\ell_q\hookrightarrow \pi_\ell^*\E_p$ with  $D_p$, one sees that 
$D_q^\ell$ is Fredholm with index 
\bear
\label{A.ind.D.l=ind.D-}
\iota_\ell\, = \, \ind D_q^\ell \, = \,  \ind D_p-\ell (\dim X-2)
\eear
for all $q=(p, {\bf x})\in \M_{\ell, simple}^*$. 
 As in \eqref{4.PsiD},  let  $\Fred_{\iota_\ell}\to \M_{\ell, simple}^*$ be the fiber  bundle whose fiber at $q=(p, {\mathbf x})$ is the space of index~$\iota_\ell$  Fredholm operators  from $\E^\ell_q$ to $\pi_\ell^* \F_p$. This is   stratified by submanifolds $\Fred_{\iota_\ell}^s$, and
$$
 \Psi^\ell(q)=D^\ell_q
$$
defines a section of this bundle.   Let 
$$
V^\ell\subseteq  \M_{\ell, simple}^*
$$
 be the open set of all $q=(p, x_1, \dots, x_\ell)$ such that there exists an injective point $y$, distinct from the set $\{x_i\}$, such that    $\ker D^{\ell+1}_{(q, y)}=0$. 

\begin{lemma} 
\label{lemmaA5}
The section $\Psi^\ell$ is transverse to $\Fred_{\iota_\ell}^s$ along  $V^\ell$,   as is its restriction to $\pi^{-1}(\M_C$).    Hence for $s\ge 1$  the sets
\bear
\label{4.defSells2}
S^{\ell, s}\ =\ V^\ell\cap (\Psi^\ell)^{-1}\Fred^s_{\iota_\ell}\ =\ \left\{  q\in V^{\ell}\, |\, \dim \ker D^\ell_q=s   \right\}
\eear
and $S^{\ell, s}_C= S^{\ell, s} \cap \pi_\ell^{-1}(\M_C)$ are  submanifolds of  codimension  $s(s-\iota_\ell)$, where $\iota_\ell$ is given by \eqref{A.ind.D.l=ind.D-}. 
\end{lemma}

\begin{proof} To prove transversality  at   $q\in S^{\ell,s}$  we must show that  the image of $(d\Psi^\ell)_q$ projects surjectively onto the normal space $\Hom(\ker D^\ell_q, \cok D^\ell_q)$ to $\Fred^s_{\iota_\ell}$ at $D^\ell_q$.   Fix a slice containing $q$ and identify $\cok D^\ell_q$ with the kernel  of the adjoint operator $(D^\ell_q)^*$ defined as in \eqref{4.2.D*def}.   By contradiction, assume there exists a non-zero element of  the normal space, regarded as a linear map $A_q:\ker D^\ell_q\ra \ker (D^\ell_q)^*$, such that $ \langle A_q, (\delta_v D_q^\ell)\rangle_{L^2}=0$ for every variation $v\in T_q\M_{\ell, simple}$.  Since $A_q \ne 0$, there exists an $L^2$-normalized   $\kappa\in \ker D^\ell_q$ such that $c=A_q\kappa\in \ker (D^\ell_q)^*$ is nonzero. Fix an $L^2$ orthonormal basis $\{\kappa_i\}$ of $\ker D^\ell_q$ with  $\kappa_1=\kappa$.   We then have
\bear
\label{4.AdeltaD}
0\ =\ \langle A_q, (\delta_v D_q^\ell)\rangle_{L^2}
\ =\   \sum_i \int_C \lg A_q\kappa_i, \, (\delta_v D^\ell_q)\kappa_i\rg
\eear
for all $v\in T_q\M_{\ell, simple}$.

The assumption that $q\in V^\ell$ means, by definition,  that there is an injective point $y\notin \{x_1, \dots, x_\ell\}$ such that the map 
  \bear
\label{ellthevaluationmap}
\ev_x: \ker D_q^\ell \ra  N_{f(x)}  \qquad \mbox{given by $\kappa\mapsto \kappa^N(x)$}
\eear 
is injective for $x=y$, and hence  for all $x$  in a neighborhood of  $y$. As in  the proof of Lemma~\ref{lemma4.1kc},  there exits an injective point $x$ in that neighborhood with 
  $c(x)\ne 0$.  For this $x$, the values $\{\kappa^N_i(x)\}$ are linearly independent because \eqref{ellthevaluationmap} is injective. 
 Applying  Lemma~\ref{LemmaA6} below with $\xi=\kappa_1^N(x)\ne 0$ and $V=\mbox{span}\{\kappa^N_i(x)\, |\, i\ge 2\}$ produces a $K$ satisfying \eqref{nablaKconditions}  below.
  
 Now proceed as in the proof of Proposition~\ref{MmfdThm}b,  taking  $v_\ep=(0, 2\beta_\ep K)$ in \eqref{4.AdeltaD}.  These variations do not affect the map  $f$, the complex structure on the domain or the marked points, and hence do not change the domain and range of the operators \eqref{A.EF}.     Because  $D_p$ and $D_q^\ell$ are differential operators with the same formula, the variation $(\delta_{v_\ep}D^\ell)_q$ is  again  given by  \eqref{4.variationDkappa}  with $K$ replaced by $2\beta_\ep K$.

After substituting and taking the limit $\ep\to 0$ as in the  proof of Proposition~\ref{MmfdThm}b, equation \eqref{4.AdeltaD} implies that

\bear
\label{4.AdeltaD2}
0\ =\  \sum_i  \lg (A_q\kappa_i)(x), \,  (\nabla_{\kappa^N_i(x)} K)f_*j\rg \ =\ |c(x)|^2,
\eear
where the last equality holds because $(\nabla_{\kappa^N_i(x)} K)f_*j=0$ for all $i\ge 2$ by  \eqref{nablaKconditions}. This contradicts the fact that $c(x)\not=0$, and hence establishes the  transversality of $\Psi^\ell$ at $q\in S^{\ell, s}$.
 The restriction of $\Psi$ to $\pi_\ell^{-1}(\M_C)$ is also transverse to $\Fred^s_{\iota_\ell}$ because for each  embedding  $q\in S^{\ell, s}$ with image $C$,  the  variations $v_\ep$ above  are tangent to $\pi_\ell^{-1}(\M_C)$.  Thus  $S^{\ell,s}$ and $S_C^{\ell,s}$ are manifolds whose codimension, in both cases, is the dimension of the normal space to $\mbox{Fred}^s_{-4\ell}$, which is $(\dim \ker D^\ell_q)(\dim \cok D^\ell_q)=s(s-\iota_\ell)$.   
\end{proof}
\medskip

 \begin{lemma}
 \label{LemmaA6} 
 Fix $p=(f, J)\in \M_{simple}$, an injective  point $x\in C$, and a neighborhood $U$ of $f(x)$.  For  any nonzero $\xi\in N_{f(x)}$, any subspace $V\subseteq N_{f(x)}$ not containing $\xi$, and any   $c\in (\Lambda^{1,0}_C \otimes_\cx f^*TX)_x$,  there exists a  $K\in T_J \J$, supported on $U$, vanishing along $f(C)$  such that, at the single point $x$: 
\bear
\label{nablaKconditions}
\mbox{$(\nabla_\xi K) f_*j=c$ \quad and\quad   $(\nabla_{w} K) f_*j=0$ \ $\forall w\in V$. }
\eear
 \end{lemma}
 
 \begin{proof} 
Still following the proof of  Proposition~\ref{MmfdThm}b,  there is a  $K_0\in T_J\J$ such that 
 $K_0 f_* j=c$ at $x$.  Choose a local coordinate system $\{z, y_1, y_2, \dots\}$ centered  at $f(x)$ with $z$ a local complex coordinate  on $f(C)$,  and $\{y_i\}$ real coordinates  vanishing along $f(C)$, and with $\frac{\partial\ }{\partial y_1}{\big|}_{f(x)}=\xi$ and $\frac{\partial\ }{\partial y_k}{\big|}_{f(x)}\in V$ for  $2\le k\le \dim V+1$. Then $K= y_1 \beta K_0$ has the  required properties where $\beta$ is any smooth function supported in $U$ with $\beta\equiv 1$ near the origin.
 \end{proof} 

\medskip 
 
\begin{proof}[Proof of Proposition~\ref{PropA4}]  We begin by making a series of observations about the images of the sets $V^\ell$ and $S^{\ell, s}$ under the forgetful map \eqref{A.pi.ell}.  \smallskip
\begin{enumerate}[(i)]
 \setlength\itemsep{1em}

\item {\it The images of  the $V^\ell$ cover $\M_{simple}$.}  If not, there would be a map  $p\in \M_{simple}$ not in the image of any $V^\ell$.  Choose a dense sequence 
$\{ x_1, x_2, \dots \}$ of distinct injective points in the domain $C$ of $p$.  Then for each $\ell$, $q_\ell = (p, x_1, \dots, x_\ell)\notin V^\ell$, which implies that $\ker D^{\ell+1}_{q_{\ell+1}} \not=0$.   But then $\ker D^{\ell+1}_{q_{\ell+1}}\subseteq \ker D^{\ell}_{q_\ell}$ are nontrivial  nested subspaces of the finite-dimensional vector space $\ker D_p$,  so have a nonzero intersection.  Hence there is a nonzero $\kappa\in \ker D_p$ whose normal component vanishes at all $x_i$,  and therefore everywhere, contradicting Lemma~\ref{lemma4.1kc}.    

\item {\it The images of  the  $S^{\ell,s}$ with  $s\ge 1$  cover $\W$.}   Given $p\in\W$, we have  $\ker D_p\not= 0$.   
As in (i), there is a sequence $\{x_i\}\subset C$ and an $m>0$ such that $\ker D^m_{q_m}=0$.  Let $\ell$ be the largest $k$ such that $\ker D_{q_k}^k\not= 0$.  Then  by \eqref{4.defSells2}, $q_\ell\in S^{\ell, s}$ for $s=\dim \ker D^\ell_{q_\ell}\ge 1$, and hence $p\in\pi_\ell(S^{\ell, s})$.

\item  {\it $S^{0,1}=\W^1$ is a submanifold of $\M_{simple}$.}   Equation~\eqref{4.defSells2} shows that $S^{0,1}\subseteq \W^1$, while Lemma~\ref{lemma4.1kc} implies $\W^1\subseteq S^{0,1}$. Hence, by Lemma~\ref{lemmaA5},  $\W^1$ is a submanifold of $\M_{simple}$ of codimension $1-\iota_0$.     In particular,  for each component $\N$ of $\M_{simple}$ with  $\ind D_p=0$, we have $\iota_0=0$ by \eqref{A.ind.D.l=ind.D-}, so
the restriction  $\W^1\cap \N$  is  a codimension 1 submanifold of $\N$. 
 
\item {\it $\pi_\ell: S^{\ell, s}\to\M_{simple}$ is a Fredholm map of index $2\ell+s(\iota_\ell- s)$.} This  map is the composition of the inclusion $S^{\ell, s}\to\M_{\ell, simple}$, which has index $s(\iota_\ell-s)$ by Lemma~\ref{lemmaA5}, and the map \eqref{A.pi.ell} which has  index $2\ell$. 
\end{enumerate}  
\smallskip

By Facts~(ii) and (iii),   $\W\setminus \W^1$ is covered by the sets $\pi_\ell({S}^{\ell, s})$ for  $\ell\ge 0$,  $s\ge 1$, and $(\ell, s)\not= (0,1)$.  By Fact~(iv) and \eqref{A.ind.D.l=ind.D-}, 
the intersection of each of these   sets  with $\N$ is the image of a Fredholm map of index 
$2\ell-\ell s(\dim X-2) -s^2 \le 2\ell-4\ell s-s^2 \le -3$. Thus $(\W\setminus \W^1)\cap \N$  is a set of  codimension~3 in the sense of Definition~\ref{4.defcodimkset}.

 The same proof applies if we restrict everything to  $\M_C$ instead of $\M_{simple}$. 
  
\end{proof}

\vspace{5mm}

{\small

\medskip

}


\begin{thebibliography}{}

\bibitem[BP1]{b-p-BPS} J.~Bryan and R.~Pandharipande,  {\em BPS states of curves in Calabi-Yau 3-folds}, Geometry \& Topology {\bf 5}(2001), 287--318.

\bibitem[BP2]{b-p-GWcurves}    \underline{\hspace{1cm}},    {\em The local Gromov-Witten theory of curves}, J. Amer. Math. Soc.  {\bf 21}(2008), 101--136.

 

\bibitem[FP]{f-p} K.~Faber and R.~Pandharipande, {\em Hodge integrals and Gromov-Witten theory}, Invent. Math. {\bf 139}(2000), 173--199.


\bibitem[GV]{gv} R. Gopakumar and C. Vafa, {\em M-Theory and
Topological Strings I and II}, hep-th/9809187 and
hep-th/9812127.


\bibitem[IP1]{ip1} E.N.~Ionel and T.H.~Parker, {\em  The  Gromov invariants of
Ruan-Tian and Taubes},  Math. Res. Lett. {\bf 4}(1997), 521--532.

\bibitem[IP2]{IP2}   \underline{\hspace{1cm}}, {\em Relative Gromov-Witten Invariants}, Annals of Math.  {\bf 157}(2003), 45--96.


 
\bibitem[IS1]{IS1}  S.~Ivashkovich and V.~Shevchishin, {\em Pseudo-holomorphic curves and envelopes of meromorphy of two-spheres in ${\mathbf {CP}}^2$},  arXiv:math.CV/9804014.

\bibitem[IS2]{IS2}  S.~Ivashkovich and V.~Shevchishin, {\em Structure of the moduli space in a neighborhood of a cusp-curve and meromorphic hulls},  Invent. Math. {\bf 136}, (1999) 571-602.

\bibitem[KP]{k-p}  A. Klemm and R. Pandharipande, {\em Enumerative geometry of Calabi-Yau 4-folds}, Commun. Math. Phys. {\bf 281}(2008), 621-653.



\bibitem[K]{K}  U.~Koschorke, {\em Infinite dimensional K-theory and characteristic classes of Fredholm bundle maps}, in Proceedings of
the Symposia in Pure Mathematics, Vol. XV, pp 95-133.   AMS, Providence, R.I. 1970.

\bibitem[L]{L} J.~Lee, {\em Holomorphic 2-forms and vanishing theorems for Gromov-Witten invariants}, Canad. Math. Bull. {\bf 52}(2009),  87--94.



\bibitem[LP1]{LP1} J.~Lee and T.H.~Parker,  {\em A Structure Theorem for the Gromov-Witten Invariants of K\"{a}hler Surfaces}.  J. Diff. Geom. {\bf 77} (2007), 483-513.



\bibitem[LP2]{LP2} J.~Lee and T.H.~Parker, {\em  An obstruction bundle relating Gromov-Witten invariants of curves and 
K\"{a}hler surfaces},   American J. of Math. {\bf 134} (2012),  453-506.

\bibitem[LM]{LM} R.B.~Lockhart and R.C.~McOwen, {\em Elliptic differential operators on noncompact manifolds},
Ann. Scuola Norm. Sup. Pisa Cl. Sci.  {\bf 12} (1985) 409-447. 

\bibitem[MOOP]{mnop} D. Maulik, A. Oblomkov, A. Okounkov, and R. Pandharipande,  {\em Gromov-Witten/Donaldson-Thomas correspondence   for toric 3-folds},  Invent. Math. {\bf 186}(2011),  435--479. 

\bibitem[MW]{mw} M.~Micallef and B.~White, {\em The structure of branch points in minimal surfaces and in pseudoholomorphic curves},  Ann. of Math.  {\bf 141}(1995), 35--85.

 
\bibitem[MS]{ms} D.~McDuff and  D.~Salamon,    {\em J-holomorphic curves and symplectic topology},  second ed.,   A.M.S. Colloquium Publications, {\bf 52},  AMS, Providence, RI, 2012.

\bibitem[P]{p} R. Pandharipande, {\em Hodge integrals and degenerate contributions}, Commun. Math. Phys. {\bf 208}(1999), 489-506.


\bibitem[PP]{p-p}  R. Pandharipande and  A. Pixton,  {\em   Gromov-Witten/Pairs correspondence for the quintic 3-fold}, J. Amer. Math. Soc.,  {\bf 30}  (2017),  389-449.

\bibitem[PT]{p-t} R. Pandharipande and R.P.~Thomas, {\em Curve counting via stable pairs in the derived category}, Invent Math. {\bf 178}(2009), 407--447.

 \bibitem[Pd]{pardon} J. Pardon, {\em An algebraic approach to virtual fundamental cycles on moduli spaces of pseudo-holomorphic curves},   Geometry \& Topology {\bf 20} (2016),  779-1034. 


\bibitem[S]{s} S. Smale, {\em An Infinite Dimensional Version of Sard's Theorem},  Amer. J. Math. {\bf 87}(1965),  861--866. 

\bibitem[T]{t} C. H. Taubes, {\em Counting
pseudo-holomorphic curves in dimension four}, J. Diff. Geom. {\bf 44}(1996), 818--893.

\bibitem[Z]{z} A. Zinger, {\em A comparison theorem for Gromov-Witten invariants in the symplectic category}, Adv. Math. {\bf 228}(2011), 535--574.

\end{thebibliography}
\end{document}